\newtheorem{theorem}{Theorem}[section]
\newtheorem{corollary}[theorem]{Corollary}
\newtheorem{claim}[]{Claim}
\newtheorem{lemma}[theorem]{Lemma}
\newtheorem{proposition}[theorem]{Proposition}
\theoremstyle{definition}
\newtheorem{definition}[theorem]{Definition}
\newtheorem{question}[theorem]{Question}
\theoremstyle{remark}
\newtheorem{remark}[theorem]{Remark}
\numberwithin{equation}{section}
\newcommand{\an}{\textnormal{An}^G}
\newcommand{\V}{\mathcal{V}}
\newcommand{\IV}{\mathcal{IV}}
\newcommand{\R}{\mathbb{R}}
\newcommand{\N}{\mathbb{N}}
\newcommand{\mH}{\mathcal{H}}
\newcommand{\F}{\mathcal{F}}
\newcommand{\B}{\mathcal{B}}
\newcommand{\C}{\mathcal{C}}
\newcommand{\mZ}{\mathbb{Z}}
\newcommand{\Z}{\mathcal{Z}}
\newcommand{\mS}{\mathcal{S}}
\newcommand{\M}{\mathbf{M}}
\newcommand{\mf}{\mathbf{f}}
\newcommand{\mF}{\mathbf{F}}
\newcommand{\mI}{\mathbf{I}}
\newcommand{\spt}{\operatorname{spt}}
\newcommand{\dist}{\operatorname{dist}}
\newcommand{\Div}{\operatorname{div}}
\newcommand{\inj}{\operatorname{inj}}
\newcommand{\interior}{\operatorname{int}}
\newcommand{\Ric}{\operatorname{Ric}}
\newcommand{\Clos}{\operatorname{Clos}}
\newcommand{\rom}[1]{\expandafter\romannumeral #1}
\title{Equivariant Morse index of min-max $G$-invariant minimal hypersurfaces}
\author{Tongrui Wang}
\address{Institute for Theoretical Sciences, Westlake Institute for Advanced Study (Westlake University), Hangzhou, Zhejiang, 310024, China}
\email{wangtongrui@westlake.edu.cn}
\begin{document}
\maketitle
\begin{abstract}
For a closed Riemannian manifold $M^{n+1}$ with a compact Lie group $G$ acting as isometries, the equivariant min-max theory gives the existence and the potential abundance of minimal $G$-invariant hypersurfaces provided $3\leq {\rm codim}(G\cdot p) \leq 7$ for all $p\in M$. 
In this paper, we show a compactness theorem for these min-max minimal $G$-hypersurfaces and construct a $G$-invariant Jacobi field on the limit. 
Combining with an equivariant bumpy metrics theorem, we obtain a $C^\infty_G$-generic finiteness result for min-max $G$-hypersurfaces with area uniformly bounded. 
As a main application, we further generalize the Morse index estimates for min-max minimal hypersurfaces to the equivariant setting. 
Namely, the closed $G$-invariant minimal hypersurface $\Sigma\subset M$ constructed by the equivariant min-max on a $k$-dimensional homotopy class can be chosen to satisfy ${\rm Index}_G(\Sigma)\leq k$. 
\end{abstract}

\section{Introduction}

In the 1960s, Almgren \cite{almgren1962homotopy}\cite{almgren1965theory} initiated the min-max theory for the area functional to construct minimal submanifolds in any dimension within every compact Riemannian manifold. 
About 20 years later, Pitts \cite{pitts2014existence} and Schoen-Simon \cite{schoen1981regularity} established the regularity theory for codimension one hypersurfaces in closed manifolds, leading to the famous Almgren-Pitts Min-max Theory. 
Recently, this theory has undergone significant development and is now employed extensively in the study of minimal hypersurfaces (\cite{marques2017existence}\cite{song2018existence}) and related geometric variational problems. 

In the study of min-max theory, determining the Morse index of the minimal hypersurface constructed by the min-max method is a crucial problem. 
In the works of Marques-Neves \cite{marques2012rigidity} and Zhou \cite{zhou2015min}\cite{zhou2017min}, a Morse index upper bound was investigated for the one-parameter situation. 
Later in \cite{marques2016morse}, Marques-Neves presented the first general estimates of the Morse index for min-max minimal hypersurfaces. 
Then, as one of the most important applications of min-max theory, the Morse-theoretic description of the minimal hypersurfaces set was demonstrated through a series of studies (\cite{marques2021morse}\cite{zhou2020multiplicity}). 
We refer to \cite{chodosh2020minimal} for a variant in the Allen-Cahn setting.

An extension of min-max theory is to consider the area variational problem under certain symmetric restrictions, which is known as the {\em equivariant min-max theory}. 
Pitts-Rubinstein \cite{pitts1987applications}\cite{pitts1988equivariant} made the initial claim that the equivariant min-max method can be used to construct an equivariant minimal surface with control on its index and genus in any three-dimensional closed Riemannian manifold under the isometric action of a finite group. 
This result was first achieved by Ketover \cite{ketover2016equivariant}\cite{ketover2019genus} using the min-max theory under smooth settings (see \cite{smith1983existence}\cite{de2013existence}). 

More generally, we can consider a closed Riemannian manifold $(M^{n+1}, g_{_M})$ with a compact Lie group $G$ acting isometrically on $M$. 
Then, a necessary condition for the existence of minimal $G$-hypersurfaces is ${\rm Cohom}(G):=\min_{p\in M}{\rm codim} (G\cdot p)\geq 1$, i.e. $\dim(M/G)\geq 1$. 
Otherwise, $G$ acts transitively, and there is no $G$-hypersurface. 
For the case that ${\rm Cohom}(G)= 1$, the minimal $G$-hypersurface can be obtained by taking the orbit with the largest area (cf. \cite[Section 7]{liu2021existence}). 
If ${\rm Cohom}(G)=2$, regularity problems arise in the equivariant min-max as in the min-max for closed geodesic curves under Almgren-Pitts' setting. 
Namely, if one performs Almgren-Pitts' min-max on the $2$-dimensional smooth part of $M/G$, then one would even get interior singularities of the min-max limit. 
Hence, the equivariant min-max theory mainly considers the case that ${\rm Cohom}(G)\geq 3$, which is similar to the dimension assumption $n+1\geq 3$ in Almgren-Pitts' theory. 

For compact connected $G$ with ${\rm Cohom}(G)\geq 3$, Liu \cite{liu2021existence} demonstrated an equivariant min-max construction in the smooth setting. 
Moreover, in the setting of Almgren-Pitts, the author \cite{wang2022min}\cite{wang2023min} has also adapted the multi-parameter min-max theory to an equivariant version (without the connectivity assumption of $G$), which further suggests the abundance of $G$-invariant minimal hypersurfaces provided $\Ric_M >0$. 
It now seems plausible to search for the Morse index estimates of the min-max $G$-invariant minimal hypersurfaces. 

Roughly speaking, the multi-parameter equivariant min-max theory is to find the saddle points for the mass norm among a homotopy class of some $k$-complex ($k\in \mZ^+$) in the $G$-invariant $n$-cycles space $\Z_n^G(M;\mZ_2)$. Since the mass supremum is only taken over a $k$-complex, the saddle point shall have $k$ directions in $\Z_n^G(M;\mZ_2)$ with mass non-increasing. 
Therefore, the {\em equivariant Morse index} ${\rm Index}_G(\Sigma)$ of such min-max minimal $G$-hypersurface $\Sigma$ shall be bounded by $k$, where ${\rm Index}_G(\Sigma)$ is the number of negative eigenvalues (counted with multiplicities) of the Jacobi operator $L_\Sigma$ restricted on the space of $G$-invariant normal vector fields (Definition \ref{Def: G-index G-stable}).

In this paper, our main result confirms the above heuristics by establishing upper equivariant Morse index bounds for min-max $G$-invariant minimal hypersurfaces, which is an equivariant generalization of \cite[Theorem 1.2]{marques2016morse}. 
Specifically, we have the following simplified main theorem 
(see Theorem \ref{Thm: main theorem} for a precise statement):
\begin{theorem}\label{Thm: 1}
	Let $(M^{n+1}, g_{_M})$ be a closed Riemannian manifold and $G$ be a compact Lie group acting as isometries on $M$ so that $3\leq {\rm codim}(G\cdot p) \leq 7$ for all $p\in M$. 
	Then the closed smooth embedded $G$-invariant minimal hypersurface $\Sigma\subset M$ produced by the equivariant min-max over a $k$-dimensional homotopy class can be chosen to satisfy ${\rm Index}_G(\Sigma)\leq k$. 
	%
\end{theorem}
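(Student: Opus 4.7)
The plan is to prove Theorem \ref{Thm: 1} by adapting the deformation argument of Marques--Neves \cite{marques2016morse} to the equivariant setting, combining it with the compactness theorem, the $G$-invariant Jacobi field construction, and the equivariant bumpy metric theorem that are advertised in the abstract. The overall strategy is to argue by contradiction: assume the equivariant min-max width $\mathbf{L}(\Pi)$ over a $k$-dimensional $G$-homotopy class $\Pi$ of sweepouts in $\Z_n^G(M;\mZ_2)$ is realized only by $G$-invariant minimal hypersurfaces $\Sigma$ with $\text{Index}_G(\Sigma)\geq k+1$, and then construct a competitor sweepout in $\Pi$ whose mass supremum is strictly below $\mathbf{L}(\Pi)$.

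The technical heart of the argument is an \emph{equivariant local deformation}. Near any closed minimal $G$-hypersurface $\Sigma$ with $\text{Index}_G(\Sigma)\geq k+1$, I would build a continuous $(k+1)$-parameter map $\Phi\colon B^{k+1}\to \Z_n^G(M;\mZ_2)$ with $\Phi(0)=\Sigma$, $\mathbf{M}(\Phi(v))<\mathbf{M}(\Sigma)$ for $v\neq 0$, and $\Phi$ contained in a small $\mathbf{F}$-neighborhood of $\Sigma$. The construction moves $\Sigma$ along the first $k+1$ negative eigenfunctions of the Jacobi operator $L_\Sigma$ restricted to $G$-invariant normal sections (these are by definition $G$-invariant normal vector fields), exponentiated via the $G$-invariant normal tubular neighborhood of $\Sigma$; this automatically yields $G$-equivariant deformations. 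In the equivariant bumpy case, where each such $\Sigma$ is isolated in the $\mathbf{F}$-topology among $G$-invariant minimal hypersurfaces of bounded area, this local deformation can be promoted to a global one.

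To convert the local deformation into a competitor sweepout, I would follow the Marques--Neves scheme in the equivariant category. Given an almost-minimizing $G$-equivariant min-max sequence $\{\Phi_i\}$ for $\Pi$, the equivariant compactness theorem of this paper produces a limit $\Sigma$, and the $G$-invariant Jacobi field construction guarantees that deformations along $G$-invariant normal directions behave correctly under the limit. An equivariant pull-tight and equivariant discretization/interpolation argument then permits cutting out a neighborhood of the critical $G$-cycle and replacing it by the mass-decreasing local deformation, while keeping the discrete sweepout in the same $G$-homotopy class $\Pi$. I would first treat the generic equivariant bumpy metric (where finiteness of the relevant minimal $G$-hypersurfaces makes the gluing clean), and then recover the general case by approximating $g_{_M}$ in $C^\infty_G$ by equivariant bumpy metrics, using the compactness theorem together with the lower semicontinuity of $\text{Index}_G$ under $\mathbf{F}$-convergence to inherit the index bound in the limit.

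The main obstacle will be the equivariant local deformation and discretization near orbits of smaller dimension. Specifically, the normal tubular neighborhood of $\Sigma$ in $M$ must be chosen $G$-equivariantly, which is standard at principal orbits but requires extra care where $\Sigma$ meets lower-dimensional orbit-type strata; and the Almgren--Pitts discretization/interpolation that converts continuous maps into discrete sweepouts of small fineness must be performed $G$-equivariantly, which forces one to work with the restricted class of $G$-invariant cycles throughout. A secondary obstacle is verifying that the catenoid-type estimate used to produce strict mass decrease on each deformation parameter survives in the equivariant setting --- this should reduce, after a $G$-equivariant tubular neighborhood reduction and spectral decomposition, to the non-equivariant estimate applied to $G$-invariant normal perturbations, but the reduction must be checked carefully because the first $k+1$ eigenfunctions of $L_\Sigma$ on the full normal bundle need not coincide with the first $k+1$ $G$-invariant eigenfunctions.
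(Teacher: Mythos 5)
Your overall strategy matches the paper's: an equivariant version of the Marques--Neves deformation argument, handled first for $G$-bumpy metrics (where Theorem \ref{Thm: bumpy metric} and the compactness Theorem \ref{Thm: compactness theorem} make the set of width-realizing minimal $G$-hypersurfaces with $G$-index $\geq k+1$ finite, so the Deformation Theorem \ref{Thm: deformation theorem} can be applied finitely many times to avoid them), then recovered for general metrics by $C^\infty_G$-approximation together with the upper semicontinuity of $\mathrm{Index}_G$ under varifold convergence (Proposition \ref{Prop: index estimate}, Remark \ref{Rem: index estimate}).

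Two of your anticipated obstacles are, however, spurious, and are worth clearing up. First, there is no catenoid-type estimate in \cite{marques2016morse} or in this paper's index argument: you are conflating the Morse index paper with the catenoid estimate of Marques--Neves (and Ketover--Marques--Neves), which belongs to a different line of argument (one-parameter rigidity and multiplicity-one results). Here, the strict mass decrease on each deformation parameter comes from the quantitative negative-definiteness $-\tfrac{1}{c_0}\mathrm{Id}\leq D^2A^V(u)\leq -c_0\,\mathrm{Id}$ built into Definition \ref{Def: unstable varifolds} of a $(G,k+1)$-unstable varifold, combined with the gradient flow $\phi^V$ of $A^V$ on the parameter ball (Lemma \ref{Lem: flow in parameter space}); no gluing construction is involved. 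Relatedly, the paper realizes the $(k+1)$-parameter family not as graphs over $\Sigma$ in a $G$-tubular neighborhood but as a compactly supported smooth family $\{F_v\}\subset\mathrm{Diff}_G(M)$, built by flowing along averaged extensions $Y_i$ of $G$-invariant normal eigensections (Lemma \ref{Lem: unstable varifold}); this sidesteps any delicate tubular-neighborhood geometry near non-principal orbit strata. Second, the worry that the first $k+1$ eigenfunctions of $L_\Sigma$ on the full normal bundle may not be $G$-invariant is a non-issue: $\mathrm{Index}_G$ is by Definition \ref{Def: G-index G-stable} already the index of $L_\Sigma$ restricted to $\mathfrak{X}^{\bot,G}(\Sigma)$, and the entire machinery of $(G,k)$-unstable varifolds, the flow $\phi^V$, and the deformation theorem operates exclusively with $G$-equivariant data, so one never needs to compare the equivariant spectrum with the full one. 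Finally, one small correction of framing: Theorem \ref{Thm: Jacobi field} (the $G$-invariant Jacobi field on the limit) is not used to ``guarantee that deformations behave correctly under the limit''; rather, it is the engine behind Corollary \ref{Cor: generic finiteness}, which is what makes the set $\mathcal{W}_G^{k+1}$ finite and hence the repeated application of the deformation theorem terminate.
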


Although we only consider closed manifolds in this paper, we conjecture the same conclusion under the {\em free boundary equivariant setting} (c.f. \cite{wang2023min}). 
Indeed, for the case that $n=2$, $\partial M^{n+1}\neq\emptyset$ is strictly mean convex, and $G$ is a finite group acting as orientation-preserving isometries, Franz \cite{franz2021equivariant} showed a $G$-index estimate for free boundary minimal $G$-surfaces obtained via an equivariant min-max procedure under the smooth setting of Ketover \cite{ketover2019genus}. 
The strictly mean convex assumption on $\partial M$ is used to guarantee the free boundary minimal hypersurfaces (briefly FBMHs) are all {\em properly embedded} (\cite[Remark 1.1]{franz2021equivariant}), i.e. $\partial \Sigma = \Sigma\cap \partial M$. 
Nevertheless, the recent studies of improperly embedded FBMHs (\cite{guang2021min}\cite{guang2021compactness}\cite{wang2019compactness}) may offer promising tools to remove this assumption.

Potential applications of our results include the generic denseness for minimal $G$-hypersurfaces and the Morse theory for the $G$-hypersurfaces space. 
Furthermore, a more distant prize could be the equivariant Morse-Bott theory for the area functional.

The proof of Theorem \ref{Thm: 1} is mainly based on an equivariant version of the deformation procedure developed by Marques-Neves in \cite{marques2016morse}. 
The key novelties include an equivariant bumpy metrics theorem \ref{Thm: bumpy metric} and a compactness theorem \ref{Thm: 3} for min-max $G$-hypersurfaces, which generalize \cite{white2017bumpy} and \cite{sharp2017compactness} to the equivariant case. 
We will cover both of these aspects more in the following subsections. 

Additionally, it should be noted that the equivariant min-max theory employed in this paper is an improved version generalizing from the author's previous work \cite{wang2022min}\cite{wang2023min}. 
Compared with the constructions in \cite{wang2022min}, we here do {\em not} need to assume that the union of non-principal orbits $M\setminus M^{prin}$ is a submanifold with $\dim(M\setminus M^{prin})\leq n-2$, and the dimension assumptions $3\leq n+1\leq 7$ and ${\rm Cohom}(G)\geq 3$ are now generalized by $3\leq {\rm codim}(G\cdot p) \leq 7$ for all $p\in M$. 

Specifically, in the equivariant Almgren-Pitts min-max construction, one technical challenge is that the proof of \cite[Theorem 3.9]{pitts2014existence} is not readily applicable in the equivariant scenario, since the injectivity radii of orbits generally do not admit a uniform positive lower bound. 
Using an additional assumption that $M\setminus M^{prin}$ is a submanifold with $\dim(M\setminus M^{prin})\leq n-2$, the author first addressed this issue in \cite{wang2022min} by considering $M\setminus M^{prin}$ as a whole so that $\inf_{p\in {\rm An}(M\setminus M^{prin},s,t)}{\rm Inj}(G\cdot p) >0$ and the singularity at $M\setminus M^{prin}$ of a stable minimal hypersurface can be removed by \cite{wickramasekera2014general}. 
Motivated by Li-Zhou's work \cite[Appendix B]{li2021min}, this difficulty was eventually resolved in \cite{wang2023min} by treating each orbit type stratum as a `boundary', eliminating the need for the extra assumption on $M\setminus M^{prin}$. 

Furthermore, since the singular set of a $G$-hypersurface is also $G$-invariant, the optimal regularity for stable minimal hypersurfaces \cite{schoen1981regularity}\cite{wickramasekera2014general} indicates that the assumption $3\leq {\rm codim}(G\cdot p) \leq 7$ is adequate for our regularity theory.

\subsection{Equivariant bumpy metrics theorem}

Based on the structure theorem \cite[Theorem 2.1]{white1991space}, White showed in \cite[Theorem 2.2]{white1991space} that for a generic $C^k$ Riemannian metric on $M^{n+1}$, every closed embedded minimal submanifold in $M$ is non-degenerate. 
Roughly speaking, this suggests that for a generic Riemannian metric, the area functional is a `Morse function' on the space of closed embedded hypersurfaces. 
This result has a wide range of applications in the investigation of the generic features of minimal hypersurfaces, e.g. the generic denseness (\cite{irie2018density}\cite{marques2019equidistribution}), generic regularity (\cite{chodosh2020singular}\cite{li2020generic}), and Morse theory (\cite{marques2020morse}\cite{marques2021morse}\cite{zhou2020multiplicity}) for minimal hypersurfaces. 
We also refer to \cite{ambrozio2018compactness} for a free boundary version of the bumpy metrics theorem. 

Suppose $G$ is a compact Lie group acting on $M^{n+1}$ by diffeomorphisms with $\dim(G\cdot p)\leq n-1$ for all $p\in M$. 
For any $G$-invariant Riemannian metric $g_{_M}$, the symmetry leads to the degeneracy of some non-equivariant minimal hypersurfaces. 
To be exact, let $g(t)\subset G$ be a curve in $G$ with $g(0)=e$, and $\Sigma\subset M$ be a minimal hypersurface so that $g(t)\cdot \Sigma\neq \Sigma$ for $t\neq 0$. 
Then the variation $g(t)\cdot \Sigma$ corresponds to a non-trivial Jacobi field of $\Sigma$ since ${\rm Area}(g(t)\cdot \Sigma) \equiv {\rm Area}(\Sigma)$. 
Now, a natural question is: can the area functional be an `equivariant Morse-Bott function' on the space of closed embedded hypersurfaces so that every minimal hypersurface can only be degenerate due to the given symmetry?
More precisely, we have the following question:
\begin{question}\label{Q: equivariant Morse-Bott}
	Is it true that for a generic $G$-invariant Riemannian metric on $M^{n+1}$, every Jacobi field $X\in\mathfrak{X}^\perp(\Sigma)$ of a closed embedded minimal hypersurface $\Sigma$ (could be non-equivariant) has the form of $X(p)=(\frac{d}{dt}\big|_{t=0} g(t)\cdot p)^\perp$, where $p\in\Sigma$, $g(t)\subset G$ is a curve with $g(0)=e$, and $\perp$ denotes the component that is normal to $\Sigma$. 
\end{question}

When $G$ is finite, the answer to Question \ref{Q: equivariant Morse-Bott} is positive by a natural generalization of the bumpy metrics theorem in \cite{white2017bumpy}. 
Indeed, White has shown in \cite[Theorem 2.1]{white2017bumpy} that a generic $G$-invariant Riemannian metric on $M$ is {\em bumpy} in the following sense: no finite cover of any closed, almost embedded, minimal submanifold $\Sigma$ admits a non-trivial Jacobi field. 
Here, the key result is the minimal submanifolds and the Jacobi fields need not be $G$-invariant. 
Note the finiteness of $G$ is used to ensure that $G\cdot\Sigma$ is still an almost embedded submanifold with the same dimension as $\Sigma$.

When $G$ is a general compact Lie group, a weak result of Question \ref{Q: equivariant Morse-Bott} will be sufficient to use for our purpose since we only consider $G$-invariant minimal hypersurfaces in Theorem \ref{Thm: 1}. 
Specifically, we say a Riemannian metric is {\em $G$-bumpy} if no finite cover of any closed embedded $G$-invariant minimal submanifold admits a non-trivial Jacobi field  (Definition \ref{Def: G-bumpy}). 
Then we build the following $G$-bumpy metrics theorem:

\begin{theorem}[$G$-Bumpy Metrics Theorem]\label{Thm: bumpy metric}
	Let $M^{n+1}$ be a closed smooth manifold with a compact Lie group $G$ acting as diffeomorphisms of cohomogeneity ${\rm Cohom}(G)\geq 2$. 
	If $k$ is an integer no less than $3$, or $k=\infty$. 
	Then for a generic $G$-invariant $C^k$ Riemannian metric $\gamma$ on $M$, $\gamma$ is $G$-bumpy in the sense of Definition \ref{Def: G-bumpy}. 
\end{theorem}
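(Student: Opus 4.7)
The plan is to adapt the Banach manifold plus Sard--Smale strategy of White \cite{white1991space,white2017bumpy} to the $G$-equivariant category, and then to handle the extra complication that a $G$-invariant minimal submanifold $\Sigma$ may admit Jacobi fields that are not themselves $G$-invariant. Concretely, I would enumerate the possible data (a compact manifold $\Sigma$, a smooth $G$-action on $\Sigma$, a finite cover $\tilde\Sigma\to\Sigma$, and an irreducible representation $\rho$), show for each such type that the set of $G$-invariant metrics admitting a degenerate minimal submanifold of that type is meager in the Banach space $\Gamma_G^k$ of $C^k$ $G$-invariant metrics, and then take a countable intersection.

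For a fixed $G$-manifold $\Sigma$, the first step is to form the universal moduli space
\[
\mathcal{U} \;=\; \bigl\{(\gamma,u)\,:\,\gamma\in \Gamma_G^k,\ u:\Sigma\hookrightarrow M\text{ is a $G$-equivariant minimal embedding in }(M,\gamma)\bigr\},
\]
and to show, via the $G$-equivariant implicit function theorem applied to the equivariant mean-curvature operator, that $\mathcal{U}$ is a $C^{k-2}$ Banach manifold whose projection $\Pi:\mathcal{U}\to\Gamma_G^k$ is Fredholm of index zero with vertical kernel equal to $\ker L_\Sigma\cap\mathfrak{X}^\perp_G(\Sigma)$. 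Sard--Smale then yields a residual subset of $\Gamma_G^k$ on which every $G$-equivariant minimal embedding of $\Sigma$ carries no $G$-invariant Jacobi field; this adapts \cite{white1991space} essentially verbatim, with $G$-equivariance bookkept throughout.

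The principal obstacle is ruling out Jacobi fields that are not $G$-invariant. The key observation is that $L_\Sigma$ commutes with the $G$-action on normal vector fields, so $\ker L_\Sigma$ decomposes into isotypic components indexed by the countable set of irreducible unitary representations of $G$. For each nontrivial $\rho$ I would introduce the augmented moduli space
\[
\mathcal{U}_\rho \;=\; \bigl\{(\gamma,u,X)\,:\,(\gamma,u)\in\mathcal{U},\ X\in \mathfrak{X}^\perp_\rho(u(\Sigma)),\ L_{u(\Sigma)}X=0,\ \|X\|_{L^2}=1\bigr\},
\]
prove it is a Banach manifold, and show that its projection to $\Gamma_G^k$ is Fredholm of strictly negative index (producing a nontrivial $\rho$-isotypic element in $\ker L_\Sigma$ imposes strictly more conditions than the one-parameter freedom of rescaling $X$). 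Sard--Smale then forces the image to be meager, and intersecting over all $[\rho]$ produces a residual set of $G$-invariant metrics on which $\ker L_\Sigma$ vanishes identically. The crucial technical input, and where the hypothesis ${\rm Cohom}(G)\geq 2$ enters, is a transversality lemma: for any candidate $(\gamma,u,X)$ with $X\not\equiv 0$ there exists a $G$-invariant perturbation $h$ of $\gamma$ whose induced variation of $L_\Sigma$ pairs nontrivially with $X$. I would prove this by taking a bump perturbation supported in a small neighborhood of a principal orbit meeting the regular part of $u(\Sigma)$ transversally, $G$-averaging it, and invoking unique continuation for the Jacobi equation as in \cite[Theorem~2.1]{white2017bumpy}; the cohomogeneity bound ensures that such transverse principal orbits exist and in particular that $u(\Sigma)$ does not coincide with a single $G$-orbit.

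Finally, for the finite-cover part of Definition~\ref{Def: G-bumpy}, Jacobi fields on a cover $\pi:\tilde\Sigma\to\Sigma$ are sections of the pullback normal bundle and hence, after decomposition with respect to the deck group $\Delta$, correspond to sections of finitely many associated bundles on $\Sigma$; combining the $\Delta$-action with the lifted $G$-action on $\tilde\Sigma$ yields again a representation-theoretic decomposition to which the augmented-moduli-space argument applies. Taking the countable intersection of the residual sets arising from each tuple $(\Sigma,G\text{-action},\tilde\Sigma,\rho)$ produces a residual set of $G$-bumpy metrics in $\Gamma_G^k$, as desired.
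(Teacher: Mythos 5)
Your overall Banach-manifold-plus-Sard--Smale scaffolding matches what the paper does: an equivariant Structure Theorem (Theorem \ref{Thm: structure theorem}) gives the pair space $\mathcal{N}_k$ with $\pi:\mathcal{N}_k\to\Gamma_k$ Fredholm of index $0$, and Sard--Smale then kills $G$-invariant Jacobi fields for generic $\gamma$. Where you diverge is in handling non-$G$-invariant Jacobi fields. The paper follows White's Lemma 2.6 essentially verbatim: it introduces the single closed set $\mathcal{S}^p_k$ of pairs $(\gamma,N)$ for which some $p$-sheeted cover of a component of $N$ carries \emph{any} nontrivial Jacobi field, shows $\mathcal{S}^p_k\setminus\mathcal{C}_{\mathcal{N}_k}$ is nowhere dense by one local metric perturbation (conformal bump averaged over $G$, $F(y)=\int_G f(gy)\,d\mu$, combined with unique continuation), and then invokes White's abstract Theorem 2.4 to conclude $\cup_p\pi(\mathcal{S}^p_k)$ is meager. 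You instead decompose $\ker L_\Sigma$ into $G$-isotypic components and run a separate augmented-moduli-space Fredholm argument for each irreducible $\rho$. Both strategies ride on the same transversality input, but the paper's route needs no representation theory, no separate index computation for each $\rho$, and no verification that each $\mathcal{U}_\rho$ is a Banach manifold — a single ``closed and nowhere dense away from the critical set'' lemma suffices, which is noticeably lighter.

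Two concrete issues you should address. First, in the finite-cover step you invoke ``the lifted $G$-action on $\tilde\Sigma$''; for a general finite cover $\pi:\tilde\Sigma\to\Sigma$ the $G$-action on $\Sigma$ need not lift (the subgroup of $\pi_1(\Sigma)$ defining the cover may not be preserved by the induced outer action, and even when it is there can be a lifting obstruction), so the claimed ``combined $\Delta$-$G$ representation-theoretic decomposition'' is not available in general. The paper's version avoids this entirely: the $G$-invariant conformal perturbation $e^{2tF}\gamma$ on $M$ pulls back to the cover without any need for equivariance of $\tilde\Sigma$, and the pairing $\int_{\tilde\Sigma}|v|^2 F\circ\pi$ is made nonzero by unique continuation alone. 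You could repair your argument by dropping the $G$-isotypic decomposition on the cover and running the perturbation directly for each deck-character $\chi$, but at that point you have essentially reproduced White's Lemma 2.6. Second, you do not address $k=\infty$; the paper handles this by a standard Taubes-type intersection argument (noting $\mS^p_{k+1}=\mS^p_k\cap\mathcal{N}_{k+1}$ and citing \cite[Theorem 2.10]{white2017bumpy}), which you should include since the min-max application needs the $C^\infty$ statement.
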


We mention that in the definition of $G$-bumpy metrics, embedded minimal $G$-submanifolds can be replaced by immersed minimal submanifolds $\Sigma$ whose stabilizer $G_\Sigma=\{g\in G: g\cdot \Sigma=\Sigma\}$ has finite index $[G:G_\Sigma]<\infty$, and the above theorem is still valid. 
Besides, we note that the set of equivariant Morse functions is dense in the space of smooth equivariant functions by \cite{wasserman1969equivariant}. 
In light of this, we hope the answer to Question \ref{Q: equivariant Morse-Bott} is affirmative, which will make it possible to apply equivariant Morse theory to investigate the space of minimal hypersurfaces. 


\subsection{Compactness and generic finiteness of min-max $G$-hypersurfaces}
In a three-dimensional manifold $M$, the compactness for minimal surfaces with bounded topology was developed by Choi-Schoen \cite{choi1985space} provided $\Ric_M>0$.
With area and topology bound assumptions, this result was later extended by White \cite{white1987curvature} for general parametric elliptic functionals. 
In higher dimensions, Sharp derived a compactness theorem \cite[Theorem 2.3]{sharp2017compactness} for minimal hypersurfaces with area and Morse index uniformly bounded, which is based on the curvature estimate for stable minimal hypersurfaces of Schoen-Simon \cite{schoen1981regularity}. 
Additionally, the method developed in \cite{simon1987strict}\cite{colding2000embedded} also suggests the existence of a non-trivial Jacobi field on the limit hypersurface. 
Hence, combining \cite[Theorem 2.3]{sharp2017compactness} with \cite[Theorem 2.1]{white2017bumpy}, one immediately obtains a generic finiteness result for closed minimal hypersurfaces.

Under equivariant constraints, we wonder how to derive curvature estimates from the characteristics of equivariant variations. 
By \cite[Lemma 7]{wang2022min}, if a minimal $G$-hypersurface $\Sigma$ admits a $G$-invariant unit normal, then the $G$-stability (Definition \ref{Def: G-index G-stable}) of $\Sigma$ is equivalent to the stability, which further suggests the curvature estimate by \cite{schoen1981regularity}. 
Hence, a natural idea is to search locally on $\Sigma$ for a $G$-invariant unit normal. 

Driven by this idea, we see from the triviality of the slice representation for principal orbits \cite[Corollary 2.2.2]{berndt2016submanifolds} that there is a small neighborhood $U$ of any given principal orbit $G\cdot p\subset\Sigma\cap M^{prin}$ so that $U$ is separated by $\Sigma$ into two open $G$-subset, which immediately gives a $G$-invariant unit normal on $\Sigma\cap U$. 
On a non-principal orbit, however, the issue is more subtle since the $G$-invariant normal vector fields of $\Sigma$ may all vanish on that orbit, especially for some $\mZ_2$-actions (cf. \cite[Remark 4.7]{ketover2016equivariant}). 

One potential approach is to study the intersection between $G$-hypersurfaces and orbit type strata. 
For instance, if the intersection between a $G$-hypersurface $\Sigma$ and each non-principal orbit type stratum is always orthogonal, then one can show the local existence of a $G$-invariant unit normal on $\Sigma$  (cf. \cite[Lemma 4.3]{ketover2016equivariant}). 
In another upcoming paper, we will elaborate on this idea and demonstrate the compactness theorem for minimal $G$-hypersurfaces with bounded area and $G$-index. 

Nevertheless, the minimal $G$-hypersurfaces concerned in this paper are mainly constructed by the equivariant min-max theory, i.e. {\em min-max $G$-hypersurfaces} (Definition \ref{Def: min-max G-hypersurfaces}). 
Therefore, the regularity of such min-max $G$-hypersurfaces comes from the {\em good $G$-replacement property in annuli} (Definition \ref{Def: good replacements property}), which holds under varifolds convergences through a process of taking subsequence. 
For the reader's convenience, we give a brief description of these concepts (see Definition \ref{Def: good replacements property}, \ref{Def: amv in admissible annuli}, \ref{Def: min-max G-hypersurfaces}). 
\begin{definition}[Good $G$-replacement property, min-max $(c,G)$-hypersurfaces]\label{Def: good replacements property and min-max G-hypersurfaces}
	Let $U\subset M$ be an open $G$-set and $V_0\in\V^G_n(M)$ be a $G$-varifold that is stationary in $U$. 
	We say $V_0$ has {\em good $G$-replacement property} in $U$, if for any finite sequence of compact $G$-sets $\{K_i\subset U\}_{i=1}^q$, there are $G$-varifolds $\{V_i\}_{i=1}^q\subset\V_n^G(M)$ stationary in $U$ so that 
	\[V_{i-1} \llcorner (M\setminus K_i) = V_i \llcorner (M\setminus K_i),\quad \|V_0\|(M) = \|V_i\|(M), \quad  V_i\llcorner \interior (K_i)= |\Sigma_i|,  \]
	where $1\leq i\leq q$ and $\Sigma_i$ is a stable minimal $G$-hypersurface with integer multiplicity. 
	
	Then a minimal $G$-hypersurface $\Sigma$ (possibly with multiplicity in $\N$) is said to be a {\em min-max $(c,G)$-hypersurface (with multiplicity)} for some $c\in\mZ_+$, if for any $c$ concentric $G$-annuli $\mathcal{A}=\{\an (p,s_i,t_i)\}_{i=1}^c$ in $M$ with $s_i<t_i<\frac{1}{2}s_{i+1} < \frac{1}{2}t_{i+1}$, $i\in\{1,\dots,c-1\}$, $|\Sigma|$ has good $G$-replacement property in at least one of $\an\in\mathcal{A}$. 
\end{definition}
Note the equivariant Morse index of minimal $G$-hypersurfaces can be generalized to the sense of varifolds (Definition \ref{Def: unstable varifolds}). 
Together, we build the following compactness theorem for min-max $G$-hypersurfaces (see Theorem \ref{Thm: compactness theorem}, \ref{Thm: Jacobi field}, Proposition \ref{Prop: index estimate}). 

\begin{theorem}[Compactness theorem for min-max $G$-hypersurfaces]\label{Thm: 3}
	Let $(M^{n+1},g_{_M})$ be a closed Riemannian manifold and $G$ be a compact Lie group acting by isometries on $M$ so that $3\leq {\rm codim}(G\cdot p) \leq 7$ for all $p\in M$. 
	Suppose $c\in\mZ_+$, $\{\Sigma_i \}_{i\in\N}$ is a sequence of min-max $(c,G)$-hypersurfaces (Definition \ref{Def: min-max G-hypersurfaces}) with $\sup_{i\in\N} \|\Sigma_i\|(M) \leq C <+\infty$. 
	Then $\Sigma_i $ converges (up to a subsequence) in the varifold sense to a min-max $(c,G)$-hypersurface $\Sigma$ with a constant multiplicity on each $G$-component. 
	Moreover, 
	\begin{itemize}
		\item[(i)] the convergence (up to a subsequence) is locally smooth and graphical with multiplicity on $\Sigma$ except for a finite union of orbits $\mathcal{Y}=\cup_{k=1}^K G\cdot p_k$;
		\item[(ii)] if the multiplicity of the local smooth graphical convergence is $1$, then $\mathcal{Y}=\emptyset$;   
		\item[(iii)] if $\Sigma_i\neq\Sigma$ eventually, then there is a non-trivial $G$-invariant Jacobi field on $\Sigma$ or its double cover; 
		 \item[(iv)] if $\sup_{i\in\N}{\rm index}_G(\Sigma_i) \leq I$, then ${\rm Index}_G(\Sigma)\leq I$.
	\end{itemize}
\end{theorem}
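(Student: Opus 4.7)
My plan follows the scheme of Sharp's compactness theorem \cite{sharp2017compactness}, modified for the $G$-invariant setting in which the unavoidable singular set is a union of orbits rather than of isolated points. First, by the uniform area bound and compactness of Radon measures, extract a subsequential varifold limit $V\in\V_n^G(M)$. To show $V$ is a min-max $(c,G)$-hypersurface, I would argue that the good $G$-replacement property is closed under varifold convergence: for any family of $c$ concentric $G$-annuli and any compact $G$-sets inside one of them, the replacement varifolds for each $|\Sigma_i|$ converge (after passing to a subsequence) by Schoen--Simon/Wickramasekera compactness for stable minimal $G$-hypersurfaces, which is available under the codimension hypothesis $3\le\operatorname{codim}(G\cdot p)\le 7$. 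A diagonal argument over a dense countable family of annulus systems yields a limit $V=|\Sigma|$ which is stationary, integer-rectifiable, and enjoys the good $G$-replacement property in at least one annulus of every such $c$-family; the constancy theorem on each regular $G$-component gives the constant multiplicity.

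For parts (i)--(ii), define the bad set $\mathcal{Y}\subset M$ to consist of those points where $|A_{\Sigma_i}|$ blows up; $\mathcal{Y}$ is automatically $G$-invariant. Away from $\mathcal{Y}$, the good $G$-replacement structure in annuli provides neighborhoods on which $|\Sigma_i|$ is eventually stable, yielding uniform curvature bounds and hence locally smooth graphical convergence of some integer multiplicity $m$. The monotonicity formula combined with the area bound $C$ forces $\mathcal{Y}$ to be a finite union of orbits, giving (i). When $m=1$, Allard's regularity theorem, applied together with the stationarity of $V$, rules out any point of $\mathcal{Y}$, yielding (ii).

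For (iii), assume $\Sigma_i\neq\Sigma$ eventually with multiplicity $m$. Choosing a $G$-equivariant unit normal on $\Sigma\setminus\mathcal{Y}$ (passing to the $G$-equivariant orientation double cover when $\Sigma$ is one-sided), write the sheets of $\Sigma_i$ as graphs $u_i^{(1)},\dots,u_i^{(m)}$ over $\Sigma$. Following \cite{simon1987strict,colding2000embedded}, a non-trivial Jacobi field $\varphi$ on $\Sigma$ (or its double cover) is produced by normalizing appropriate differences of the $u_i^{(j)}$ and passing to the limit of the linearized minimal surface equation; $\varphi$ is $G$-invariant because the normalizing constants can be chosen $G$-equivariantly. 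The extension of $\varphi$ across $\mathcal{Y}$ follows from a removable-singularities argument: $\operatorname{codim}(G\cdot p)\ge 3$ in $M$ implies each orbit in $\mathcal{Y}$ has codimension at least $2$ in $\Sigma$, so the $L^\infty$ bound on $\varphi$ inherited from the graph parametrization combined with the ellipticity of the Jacobi operator yields a smooth extension. Finally, (iv) is the standard lower semicontinuity of the Morse index, restricted to the $G$-invariant normal bundle: any orthonormal family of $G$-invariant negative directions for the limit quadratic form can be cut off near $\mathcal{Y}$ (possible with arbitrarily small energy since $\mathcal{Y}$ has codimension $\ge 2$) and pulled back along the graph parametrization to produce at least ${\rm Index}_G(\Sigma)$ linearly independent $G$-invariant negative directions on $\Sigma_i$ for large $i$.

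The principal obstacle will be part (iii): the multi-sheeted graphical convergence, the $G$-equivariant orientation double cover in the one-sided case, and the removable-singularities argument across the orbit-singular set $\mathcal{Y}$ each require delicate $G$-equivariance bookkeeping. Most critically, one must verify that the normalization producing a non-trivial limit is compatible with the $G$-action, so that the constructed Jacobi field does not degenerate after projection onto $G$-invariant sections; this is where the possibility of passing to a double cover must be invoked, and where the one-sided $\mathbb{Z}_2$-equivariant situations noted in \cite{ketover2016equivariant} need to be handled with care.
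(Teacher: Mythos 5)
Your overall architecture matches the paper's (Theorems \ref{Thm: compactness theorem}, \ref{Thm: Jacobi field} and Proposition \ref{Prop: index estimate}), but two of your steps have genuine gaps. First, the claim that ``the monotonicity formula combined with the area bound $C$ forces $\mathcal{Y}$ to be a finite union of orbits'' is not justified: in Sharp's theorem the finiteness of the bad set comes from the index bound, which is \emph{not} hypothesized here. The paper extracts finiteness from the good $G$-replacement structure alone, via a careful subsequence-and-radius bookkeeping argument (Claims \ref{Claim: uniform subsequence} and \ref{Claim: finite steps}): after passing to a suitable subsequence, around each $p\in\spt\|\Sigma_\infty\|$ one can fix a radius $r(G\cdot p)>0$ so that every $\Sigma_i$ eventually has good $G$-replacement in $\an(p,s,t)$ for all $0<s<t\le r$; accumulation of infinitely many bad orbits at some $G\cdot p$ would then force the smaller bad orbits into annuli where GGRP holds, a contradiction. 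The paper even remarks explicitly that this uniform-radius claim does \emph{not} naively diagonalize over base points $p$, so the inductive structure of Claim \ref{Claim: finite steps} is essential; your ``diagonal argument over a dense countable family of annulus systems'' glosses over precisely this difficulty.

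Second, your removable-singularities argument for extending the Jacobi field across $\mathcal{Y}$ is circular as stated. You write that the ``$L^\infty$ bound on $\varphi$ inherited from the graph parametrization \dots\ yields a smooth extension,'' but the graphical convergence is only established on $\Sigma\setminus\mathcal{Y}$, so the normalized height differences $h_i$ could a priori blow up as one approaches $\mathcal{Y}$; the Harnack inequality only controls them on compact subsets of $\Sigma\setminus\mathcal{Y}$. Establishing the uniform bound up to $\mathcal{Y}$ is precisely the nontrivial input: the paper constructs a local $G$-invariant minimal foliation (Proposition \ref{Prop: foliation}) near each orbit in $\mathcal{Y}$ and uses its leaves as barriers, yielding the two-sided estimate $\sup_{\B_r^\Sigma(G\cdot p)}|u_i^m-u_i^1|\le 2\sup_{\partial\B_r^\Sigma(G\cdot p)}|u_i^m-u_i^1|$ via the maximum principle; only then is the regularity/removable-singularity step available. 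The codimension count alone does not produce the bound. For part (iv), your cutoff-based semicontinuity argument is correct but differs from the paper's route, which instead uses the quantitative $(G,k)$-unstable notion of Definition \ref{Def: unstable varifolds}, whose openness in the $\mF$-topology gives the index bound without explicit cutoff estimates.
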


As a corollary, Theorem \ref{Thm: bumpy metric} and \ref{Thm: 3} immediately lead to the following $C^\infty_G$-generic finiteness result for min-max $(c,G)$-hypersurfaces with bounded area. 
\begin{corollary}\label{Cor: generic finiteness}
	Suppose a compact Lie group $G$ acts on a closed manifold $M$ by diffeomorphisms with $3\leq {\rm codim}(G\cdot p) \leq 7$ for all $p\in M$. 
	Then for any $c\in\mZ_+$ and a $C^\infty$-generic $G$-invariant Riemannian metric $g_{_M}$ on $M$, the set of min-max $(c,G)$-hypersurfaces in $(M,g_{_M})$ with mass uniformly bounded is finite. 
\end{corollary}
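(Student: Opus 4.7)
The plan is to deduce the corollary as a short contradiction-plus-Baire consequence of Theorem \ref{Thm: bumpy metric} and Theorem \ref{Thm: 3}. For each pair $(c,L)\in\mZ_+\times\mZ_+$, I would let $\mathcal{R}_{c,L}$ denote the collection of $C^\infty$ $G$-invariant Riemannian metrics $g_{_M}$ on $M$ for which the set of min-max $(c,G)$-hypersurfaces in $(M,g_{_M})$ of mass at most $L$ is finite. The first step is to show that every $G$-bumpy metric lies in $\mathcal{R}_{c,L}$. Once this claim is established, Theorem \ref{Thm: bumpy metric} gives the residuality of the set $\mathcal{B}$ of $G$-bumpy metrics in the Fréchet space of $C^\infty$ $G$-invariant metrics, and intersecting $\mathcal{B}$ with the $\mathcal{R}_{c,L}$ across the countable index set $\mZ_+\times\mZ_+$ preserves residuality and delivers the corollary.

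To prove the inclusion $\mathcal{B}\subset\mathcal{R}_{c,L}$, I would argue by contradiction. Suppose that some $G$-bumpy $g_{_M}$ admits a sequence $\{\Sigma_i\}_{i\in\N}$ of min-max $(c,G)$-hypersurfaces, pairwise distinct as varifolds, with $\|\Sigma_i\|(M)\leq L$ for all $i$. By Theorem \ref{Thm: 3}, after extracting a subsequence the varifolds $|\Sigma_i|$ converge to $|\Sigma|$, where $\Sigma$ is itself a min-max $(c,G)$-hypersurface carrying a constant multiplicity on each $G$-component; in particular $\Sigma$ is a closed embedded $G$-invariant minimal hypersurface. Since the $\Sigma_i$ are pairwise distinct as varifolds, at most one of them can equal $\Sigma$, so after discarding that one term we may assume $\Sigma_i\neq\Sigma$ eventually along the subsequence. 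Part (iii) of Theorem \ref{Thm: 3} then forces the existence of a non-trivial $G$-invariant Jacobi field on $\Sigma$ or on its double cover. Both $\Sigma$ and its double cover are finite covers of a closed embedded $G$-invariant minimal submanifold of $M$, so by Definition \ref{Def: G-bumpy} the $G$-bumpy condition on $g_{_M}$ rules out any such Jacobi field. This contradiction proves the claim.

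Essentially all the content is borrowed from the two theorems cited, so the present argument is straightforward once they are granted. The one piece of bookkeeping worth highlighting is why the countable intersection $\mathcal{B}\cap\bigcap_{(c,L)\in\mZ_+\times\mZ_+}\mathcal{R}_{c,L}$ is still residual: in the space of $C^\infty$ $G$-invariant metrics, which is a separable Fréchet space and hence a Baire space, a countable intersection of residual sets is residual. The only conceptual obstacle—and it is the one doing the real work—is part (iii) of Theorem \ref{Thm: 3}, namely producing the $G$-invariant Jacobi field from a non-trivial varifold convergence; but that output is exactly tailored to cancel against the bumpy hypothesis, so the Baire closure argument above is essentially automatic.
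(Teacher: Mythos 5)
Your proof is correct and follows the same route as the paper: take a $G$-bumpy metric (generic by Theorem~\ref{Thm: bumpy metric}), assume infinitely many min-max $(c,G)$-hypersurfaces with bounded mass, extract a convergent subsequence via Theorem~\ref{Thm: compactness theorem}, and use Theorem~\ref{Thm: Jacobi field} to produce a Jacobi field that contradicts $G$-bumpiness. The final Baire-intersection over $(c,L)$ is superfluous: since you have shown $\mathcal{B}\subset\mathcal{R}_{c,L}$ for \emph{every} pair $(c,L)$, the single residual set $\mathcal{B}$ already lies in $\bigcap_{(c,L)}\mathcal{R}_{c,L}$, so no appeal to the Baire property beyond the residuality of $\mathcal{B}$ itself is needed.
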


Combining Theorem \ref{Thm: 3}, Corollary \ref{Cor: generic finiteness}, with an equivariant deformation argument generalized from \cite{marques2016morse}, we have the conclusion of Theorem \ref{Thm: 1}. 
As a further application, we see that for every $k\in\mZ^+$, the {\em $(G,k)$-width} of $M$ defined in (\ref{Eq: Gp-width}) can be achieved by the area of some minimal $G$-hypersurfaces (possibly with multiplicities) with $G$-index bounded by $k$.
\begin{corollary}\label{Cor: achieve width of M}
	Suppose a compact Lie group $G$ acts on a closed Riemannian manifold $(M,g_{_M})$ by isometries with $3\leq {\rm codim}(G\cdot p) \leq 7$ for all $p\in M$. 
	Then for any $k\in \mZ^+$, there exists a closed smooth embedded minimal $G$-hypersurface $\Sigma \in \IV_n^G(M)$ with possibly multiplicity such that $\|\Sigma\|(M)=\omega^G_k(M,g_{_M})$ and ${\rm Index}_G(\spt(\|\Sigma\|)) \leq k$. 
\end{corollary}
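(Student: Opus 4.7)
The plan is to realize $\omega^G_k(M,g_{_M})$ as a limit of min-max $G$-hypersurfaces produced by Theorem \ref{Thm: 1}, and then to extract a convergent subsequence using the compactness statement Theorem \ref{Thm: 3}.

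First, by the definition of the $(G,k)$-width $\omega_k^G(M,g_{_M})$ in (\ref{Eq: Gp-width}) as the infimum of $\mathbf{L}(\Pi)$ over all $k$-dimensional homotopy classes $\Pi$ of $G$-invariant sweepouts in $\mathcal{Z}_n^G(M;\mathbb{Z}_2)$, I would select a minimizing sequence $\{\Pi_j\}_{j\in\mathbb{N}}$ with $\mathbf{L}(\Pi_j) \searrow \omega_k^G(M,g_{_M})$. For each $j$, Theorem \ref{Thm: 1} (in the precise form of Theorem \ref{Thm: main theorem}) produces a closed smooth embedded $G$-invariant minimal hypersurface $\Sigma_j \in \mathcal{IV}_n^G(M)$, possibly with integer multiplicity, satisfying $\|\Sigma_j\|(M) = \mathbf{L}(\Pi_j)$ and $\mathrm{Index}_G(\mathrm{spt}\|\Sigma_j\|) \leq k$. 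By construction each $\Sigma_j$ is a min-max $(c,G)$-hypersurface in the sense of Definition \ref{Def: good replacements property and min-max G-hypersurfaces} for a uniform constant $c=c(k)$, since the good $G$-replacement property in concentric $G$-annuli is the standard output of the $k$-parameter equivariant min-max construction, independent of the particular homotopy class.

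Next, I would apply the compactness theorem for min-max $G$-hypersurfaces, Theorem \ref{Thm: 3}. The mass bound $\|\Sigma_j\|(M) \to \omega_k^G(M,g_{_M}) < +\infty$ provides the uniform area bound, and the uniform $G$-index bound $\sup_j \mathrm{Index}_G(\mathrm{spt}\|\Sigma_j\|) \leq k$ is exactly the hypothesis for item (iv). Consequently, a subsequence $\Sigma_j$ converges in the varifold sense to a min-max $(c,G)$-hypersurface $\Sigma \in \mathcal{IV}_n^G(M)$ with constant integer multiplicity on each $G$-component. Mass continuity under varifold convergence (combined with item (i), which guarantees that the only possible concentration is along a finite union of orbits of codimension at least one in $\Sigma$, contributing no mass) gives
\[
\|\Sigma\|(M) = \lim_{j\to\infty}\|\Sigma_j\|(M) = \omega_k^G(M,g_{_M}),
\]
and item (iv) yields $\mathrm{Index}_G(\mathrm{spt}\|\Sigma\|) \leq k$.

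The main step that warrants care is verifying that the limit $\Sigma$ indeed falls inside the scope of Theorem \ref{Thm: 3}, that is, that each $\Sigma_j$ qualifies as a min-max $(c,G)$-hypersurface with a \emph{uniform} constant $c$. This is where the structural output of the equivariant min-max construction of \cite{wang2022min,wang2023min}, as invoked in Theorem \ref{Thm: 1}, must be used: the good $G$-replacement property is established in annuli with centers and radii that can be chosen from a scale depending only on $M$, $G$, and $k$, not on the specific sweepout. Once this uniformity is in hand, the rest of the argument is a straightforward diagonal extraction, and the index and width statements pass to the limit automatically.
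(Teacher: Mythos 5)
There is a genuine gap, and it sits precisely in the step you flag as ``warranting care'' --- but the resolution you propose is not the right one, and the gap is visible even earlier in how you read the definition of the $(G,k)$-width.

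The set $\mathcal{P}^G_k$ used in (\ref{Eq: Gp-width}) consists of maps $\Phi:X\to\Z_n^G(M;\mF;\mZ_2)$ with $\dim X\ge k$ and $\Phi^*(\bar\lambda^k)\ne0$; the parameter complex $X$ need \emph{not} be $k$-dimensional, only of dimension at least $k$. Your proof, however, quietly treats the infimum as being taken over ``$k$-dimensional homotopy classes'' and then feeds each member of a minimizing sequence into Theorem \ref{Thm: main theorem} as if it were parameterized by a complex of dimension exactly $k$. Without that, Theorem \ref{Thm: main theorem} applied to a sweepout defined on $X$ with $\dim X=m>k$ gives only ${\rm Index}_G\le m$, and it produces a min-max $((3^m)^{3^m},G)$-hypersurface, so the admissible-annuli constant $c$ is \emph{not} uniform along the sequence. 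Your explanation for the uniformity of $c$ --- that the good $G$-replacement property comes with radii and centers ``depending only on $M$, $G$, and $k$'' --- is not where the uniformity comes from; $c$ enters through the combinatorial constant $(3^m)^{3^m}$ attached to the dimension of the parameter complex in the pull-tight/almost-minimizing argument, and it genuinely grows with $m$.

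The missing idea, which is the content of the paper's proof, is a topological reduction: for any $\Phi\in\mathcal{P}^G_k$ with domain $X$, the restriction $\Phi\circ i:X^{(k)}\to\Z_n^G(M;\mF;\mZ_2)$ to the $k$-skeleton $X^{(k)}$ still detects $\bar\lambda^k$, because $H^k(X,X^{(k)};\mZ_2)=0$ and the exact sequence
\[
H^{k}\bigl(X, X^{(k)} ; \mathbb{Z}_{2}\bigr) \xrightarrow{j^{*}} H^{k}\bigl(X ; \mathbb{Z}_{2}\bigr) \xrightarrow{i^{*}} H^{k}\bigl(X^{(k)} ; \mathbb{Z}_{2}\bigr)
\]
shows $i^*$ is injective on the relevant class. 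Moreover $\sup_{x\in X^{(k)}}\M(\Phi(x))\le\sup_{x\in X}\M(\Phi(x))$, so the restriction does not increase the sup-mass. One may therefore replace the minimizing sequence by its restrictions to $k$-skeletons, obtaining a minimizing sequence of homotopy classes $\bm\Pi_i$ all parameterized by $k$-dimensional cubical complexes. Only \emph{after} this reduction does Theorem \ref{Thm: main theorem} yield min-max $((3^k)^{3^k},G)$-hypersurfaces $\Sigma_i$ with $\|\Sigma_i\|(M)=\mathbf L(\bm\Pi_i)$ and ${\rm Index}_G\le k$ and a $c$ independent of $i$, after which your compactness step (Theorem \ref{Thm: compactness theorem} and Proposition \ref{Prop: index estimate}) carries through exactly as you describe.

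Two minor points: mass $\|\cdot\|(M)$ is automatically continuous under varifold weak convergence on a closed manifold, so you do not need to invoke item (i) of Theorem \ref{Thm: 3} to rule out mass loss along the bad orbits; and Theorem \ref{Thm: compactness theorem} plus Proposition \ref{Prop: index estimate} is the cleaner citation for passing the index bound to the limit than the umbrella statement Theorem \ref{Thm: 3}(iv).
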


\subsection{Outline}
In Section \ref{Sec: preliminary}, we introduce some notations and collect some useful lemmas. 
In Section \ref{Sec: generic metric}, we generalize the bumpy metrics theorem \cite{white2017bumpy} of White into our equivariant setting. 
In Section \ref{Sec: equivariant min-max}, we introduce the equivariant min-max constructions, and the compactness result for min-max $G$-hypersurfaces is given in Section \ref{Sec: compactness}. 
Finally, in Section \ref{Sec: G-index estimates}, we apply the deformation program to show the $G$-index estimates.

\noindent
{\bf Acknowledgement.} The author would like to thank Prof. Gang Tian for his constant support. 
He also thanks Zhi'ang Wu and Giada Franz for helpful discussions, and thanks the anonymous referees for helpful comments. 
The author is supported by the China post-doctoral grant 2022M722844. 

\noindent
{\bf Funding.} Project funded by China Postdoctoral Science Foundation 2022M722844. 


\section{Preliminary}\label{Sec: preliminary}

Let $(M^{n+1}, g_{_M})$ be an orientable connected closed Riemannian $(n+1)$-dimensional manifold and $G$ be a compact Lie group acting as isometries on $M$. 
Let $\mu$ be a bi-invariant Haar measure on $G$ which has been normalized to $\mu(G)=1$. 
It is well-known that there exists a (unique) minimal conjugate class $(P)$ of isotropy groups, namely the {\em principal orbit type}, such that the union of all points $p\in M$ with $(G_p)=(P)$, i.e. the union of principal orbits $M^{prin}$, forms an open dense submanifold of $M$ (\cite[Proposition 2.2.4]{berndt2016submanifolds}). 
%
%
%
The {\em cohomogeneity} ${\rm Cohom}(G)$ is defined by the co-dimension of a principal orbit $G\cdot p\subset M^{prin}$, i.e. ${\rm Cohom}(G) = \min_{p\in M} {\rm codim}(G\cdot p)$.

Denote by $B_r(p)$ and $\mathbb{B}^k_r(p)$ the geodesic ball in $M$ and the Euclidean ball in $\R^k$ respectively. 
If $A\subset M$ is a subset (submanifold, hypersurface, ...) with $G\cdot A = A$, then $A$ is said to be a {\em $G$-set} ({\em $G$-submanifold}, {\em $G$-hypersurface}, ...). 
Similarly, if a vector field $X$ on $M$ satisfies $dg(X)=X$ for all $g\in G$, where $dg$ is the tangent map of $g$, then $X$ is said to be a {\em $G$-vector field}. 
 We also use the following notations:
 \begin{itemize}
		\item $B_\rho^G(p)$: open geodesic tubes with radius $\rho$ around the orbit $G\cdot p$ in $M$;
		\item $\an(p,s,t)$: the open tube $B_t^G(p)\setminus \overline{B}^G_s(p)$; 
		\item $\mathfrak{X}(M), \mathfrak{X}^G(M)$: the space smooth vector fields and $G$-vector fields on $M$. 
\end{itemize}

\subsection{Notations in geometric measure theory}

In this subsection, we introduce some notations in geometric measure theory, which are referenced from \cite{pitts2014existence}\cite{simon1983lectures}. 
Let 
\begin{itemize}
	\item ${\bf I}_k(M;\mathbb{Z}_2)$ be the space of $k$-dimensional mod $2$ flat chains in $\mathbb{R}^L$ with support contained in $M$; 
	\item ${\mathcal Z}_n(M;\mathbb{Z}_2)$ be the space of flat chains $T\in {\bf I}_n(M;\mathbb{Z}_2)$ such that there exists $U\in {\bf I}_{n+1}(M;\mathbb{Z}_2)$ with $\partial U = T$, i.e. the boundary type mod $2$ $n$-cycles; 
	\item $\mathcal{V}_k(M)$ be the closure, in the weak topology, of the space of $k$-dimensional rectifiable varifolds in $\mathbb{R}^L$ with support contained in $M$. 
\end{itemize}
Denote by $\mathcal{IV}_k(M)\subset \mathcal{V}_k(M)$ the space of integral varifolds in $\mathcal{V}_k(M)$. 

Given $T\in {\bf I}_k(M;\mathbb{Z}_2)$, let $|T|$ and $\|T\|$ be the integral varifold and the Radon measure induced by $T$. 
Given $V\in \mathcal{V}_k(M)$, we denote by $\|V\|$ the Radon measure induced by $V$. 
Additionally, the {\it flat (semi-)norm} $\mathcal{F}$ and the {\it mass} ${\bf M}$ for flat chains in ${\bf I}_k(M;\mathbb{Z}_2)$ are defined in \cite[4.2.26]{federer2014geometric}. 
The ${\bf F}$-{\it metric} on $\mathcal{V}_k(M)$ is defined as in \cite[Page 66]{pitts2014existence}, which induces the varifold weak topology on any mass bounded subset of $\V_k(M)$. 
Then we use $\mathbf{B}^{\mF}_r(V) $ and $\overline{\mathbf{B}}^{\mF}_r(V)$ to denote the open and closed metric balls in $\mathcal{V}_k(M)$. 
Finally,  the ${\bf F}$-{\it metric} on ${\bf I}_k(M;\mathbb{Z}_2)$ is defined by
\[ {\bf F}(S,T):=\mathcal{F}(S-T)+{\bf F}(|S|,|T|).\]
For simplicity, we use the notations ${\bf I}_k(M;{\bf v};\mathbb{Z}_2)$, ${\mathcal Z}_n(M;{\bf v};\mathbb{Z}_2)$ to indicate that the topology are induced by the metric ${\bf v}$, where ${\bf v}={\bf M}$, ${\bf F}$, or $\F$. 

Using the push forward of the $G$ action, we say a varifold $V\in \mathcal{V}_k(M)$ is a {\em $G$-varifold} if $g_\#V=V$ for all $g\in G$. 
Similarly, a current $T$ with $\mZ_2$ coefficients is said to be a {\em $G$-current} if $g_\#T=T$ for all $g\in G$. 
Moreover, we can define the following subspaces of $G$-invariant elements: 
\begin{itemize}
	\item ${\bf I}_k^G(M;\mathbb{Z}_2) := \{T\in {\bf I}_k(M;\mathbb{Z}_2) : ~g_\#T= T,~\forall g\in G \}$;
	\item ${\mathcal Z}_n^G(M;\mathbb{Z}_2) := \{T\in {\mathcal Z}_n(M;\mathbb{Z}_2) : ~T=\partial U,~{\rm for~some~} U\in {\bf I}_{n+1}^G(M;\mathbb{Z}_2)  \}$; 
	\item $\mathcal{V}_k^G(M) := \{V\in \mathcal{V}_k(M) : ~g_\#V=V,~\forall g\in G\}$;
\end{itemize}
and $\mathcal{IV}_k^G(M) := \mathcal{IV}_k(M)\cap \mathcal{V}_k^G(M)$. 

Moreover, the norms and metrics $\M,\F,\mF$ can be naturally induced to the subspaces ${\bf I}_k^G(M;\mathbb{Z}_2)$, ${\mathcal Z}_n^G(M;\mathbb{Z}_2)$, $\mathcal{V}_k^G(M)$. 
 Indeed, since $G$ acts by isometries, we have $\M(T)=\M(g_\#T)$ and $\F(T)=\F(g_\#T)$ for all $g\in G$, $T\in {\bf I}_k(M;\mathbb{Z}_2)$. 
Therefore, ${\bf I}_k^G(M;{\bf v};\mathbb{Z}_2) $ and ${\mathcal Z}_n^G(M;{\bf v};\mathbb{Z}_2)$ are closed subspaces of ${\bf I}_k(M;{\bf v};\mathbb{Z}_2)$ and ${\mathcal Z}_n(M;{\bf v};\mathbb{Z}_2)$ respectively. 
Meanwhile, if $\{V_i\}_{i\in\N}\subset\V_k^G(M)$ is a sequence of $G$-varifolds so that $\lim_{i\to\infty}\mF(V_i,V)=0$ for some $V\in\V_k(M)$. 
Then, for every $g\in G$, we have $\mF(V_i,V) =\mF(g_\#V_i, g_\#V) =\mF(V_i, g_\#V) \to 0$ as $i\to\infty$, which implies $V\in \V_k^G(M)$ and $\V_k^G(M)\subset \V_k(M)$ is closed.

If $Q$ is an isoperimetric choice of $T_1,T_2\in \mI_n(M;\mathbb{Z}_2)$, $\partial T_i=0$, in the sense of \cite[Corollary 1.14]{almgren1962homotopy}, then $g_\#Q$ is an isoperimetric choice of $g_\#T_1, g_\#T_2$. 
By the uniqueness result \cite[Lemma 3.1]{marques2017existence}, we have the following lemma showing the isoperimetric choice of $G$-invariant codimension-one cycles is also $G$-invariant, and thus $\Z_n^G(M;\mZ_2)$ is indeed the $\F$-path connected component of $\Z_n(M;\mZ_2)\cap \mI_n^G(M;\mZ_2)$ containing $0$. 

\begin{lemma}\label{Lem: isoperimetric}
	There exist constants $\nu_M>0$ and $C_M>1$ such that for any $T_1,T_2\in {\bf I}_n^G(M;\mathbb{Z}_2)$ with $\partial T_1=0$, $\partial T_2= 0$, and 
	\[\mathcal{F}(T_1-T_2)<\nu_M, \]
	there exists a unique $Q\in {\bf I}_{n+1}^G(M;\mathbb{Z}_2)$ such that
	\begin{itemize}
    	\item $\partial Q = T_1-T_2$;
    	\item ${\bf M}(Q)\leq C_M\cdot\mathcal{F}(T_1-T_2)$.
  	\end{itemize}
\end{lemma}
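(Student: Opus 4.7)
The plan is to reduce to the non-equivariant isoperimetric statement and then exploit the uniqueness of the isoperimetric choice to upgrade it to a $G$-invariant one. Concretely, I would take the constants $\nu_M>0$ and $C_M>1$ to be exactly those produced by the Marques--Neves uniqueness lemma \cite[Lemma 3.1]{marques2017existence} applied to the ambient manifold $(M,g_{_M})$: for any $T_1, T_2 \in \mathbf{I}_n(M;\mathbb{Z}_2)$ with $\partial T_1 = \partial T_2 = 0$ and $\mathcal{F}(T_1-T_2) < \nu_M$, there exists a \emph{unique} $Q \in \mathbf{I}_{n+1}(M;\mathbb{Z}_2)$ with $\partial Q = T_1 - T_2$ and $\mathbf{M}(Q) \leq C_M \cdot \mathcal{F}(T_1 - T_2)$.

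First I would apply this statement to the given $G$-invariant cycles $T_1, T_2$ (viewed as ordinary flat chains) to produce such a $Q \in \mathbf{I}_{n+1}(M;\mathbb{Z}_2)$. The non-trivial point is to check that this $Q$ is automatically $G$-invariant. Fix $g \in G$. Since $G$ acts by isometries, the pushforward $g_{\#}$ preserves both the mass $\mathbf{M}$ and the flat norm $\mathcal{F}$ on $\mathbf{I}_{n+1}(M;\mathbb{Z}_2)$, and it commutes with $\partial$. Therefore $g_{\#}Q$ satisfies
\[
\partial(g_{\#}Q) = g_{\#}(T_1-T_2) = T_1 - T_2, \qquad \mathbf{M}(g_{\#}Q) = \mathbf{M}(Q) \leq C_M \cdot \mathcal{F}(T_1-T_2),
\]
so $g_{\#}Q$ is also an isoperimetric choice for the pair $(T_1,T_2)$. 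By the uniqueness part of \cite[Lemma 3.1]{marques2017existence}, $g_{\#}Q = Q$ for every $g \in G$, which is exactly the statement $Q \in \mathbf{I}_{n+1}^{G}(M;\mathbb{Z}_2)$.

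Finally, the claimed uniqueness of $Q$ inside $\mathbf{I}_{n+1}^{G}(M;\mathbb{Z}_2)$ is immediate, being a fortiori weaker than the uniqueness inside $\mathbf{I}_{n+1}(M;\mathbb{Z}_2)$ already invoked above. I do not foresee a real obstacle here, since the argument is entirely formal once one has Marques--Neves' uniqueness in hand; the only subtlety is to confirm that the isometric action of $G$ interacts correctly with the flat topology and the mass functional, which follows from the standard functorial properties of pushforward by Lipschitz (in fact, smooth isometric) maps on flat chains with $\mathbb{Z}_2$ coefficients.
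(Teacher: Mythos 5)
Your proof is correct and takes essentially the same route as the paper: the paper's own proof just cites \cite[Lemma 5]{wang2022min}, but the paragraph immediately preceding the lemma spells out exactly the mechanism you use — an isoperimetric choice $Q$ for a pair of $G$-cycles is pushed forward by $g_{\#}$ to another isoperimetric choice for the same pair, so the uniqueness in \cite[Lemma 3.1]{marques2017existence} forces $g_{\#}Q=Q$ and hence $Q\in\mathbf{I}_{n+1}^G(M;\mathbb{Z}_2)$.
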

\begin{proof}
	See \cite[Lemma 5]{wang2022min}. 
\end{proof}

Note every $T\in {\mathcal Z}_n^G(M;\mathbb{Z}_2)$ is a boundary of some $U\in {\bf I}_{n+1}^G(M;\mathbb{Z}_2)$. 
If there is another $V\neq U \in  {\bf I}_{n+1}^G(M;\mathbb{Z}_2)$ so that $\partial V=\partial U = T$, then we have $V= M-U$ by the Constancy Theorem for mod $2$ flat chains. 
Hence, the boundary map
\[\partial ~:~ {\bf I}_{n+1}^G(M;\mathbb{Z}_2) \rightarrow {\mathcal Z}_n^G(M;\mathbb{Z}_2)\]
is a double cover. 
Moreover, using the equivariant Morse function, we can show that ${\bf I}_{n+1}^G(M;\mathbb{Z}_2)$ is contractible, and the boundary map $\partial$ satisfies the lifting property. 
Indeed, we have the following result parallel to \cite[Theorem 5.1]{marques2021morse}. 

\begin{proposition}\label{Prop: isomorphism}
	The space of $G$-boundary cycles ${\mathcal Z}_n^G(M;\mathbb{Z}_2)$ is weakly homotopically equivalent to $\mathbb{RP}^\infty$, i.e. $\pi_1({\mathcal Z}_n^G(M;\mathbb{Z}_2),0)=\mZ_2$, and $\pi_k({\mathcal Z}_n^G(M;\mathbb{Z}_2),0)=0$ for $k\geq 2$.  
\end{proposition}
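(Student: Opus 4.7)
The plan is to mimic the non-equivariant Almgren isomorphism \cite[Theorem 5.1]{marques2021morse} by exhibiting $\partial : \mathbf{I}_{n+1}^G(M;\mathbb{Z}_2) \to \mathcal{Z}_n^G(M;\mathbb{Z}_2)$ as a (continuous) $\mathbb{Z}_2$-cover with weakly contractible total space; this will immediately identify $\mathcal{Z}_n^G(M;\mathbb{Z}_2)$ as a $K(\mathbb{Z}_2,1)$, i.e.\ $\mathbb{RP}^\infty$.

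\textbf{Step 1 (the map is a double cover).} By the Constancy Theorem for mod~$2$ flat chains, for every $T\in \mathcal{Z}_n^G(M;\mathbb{Z}_2)$ with $T=\partial U$ and $U\in \mathbf{I}_{n+1}^G(M;\mathbb{Z}_2)$, the only other preimage is $M-U$, also $G$-invariant since $M$ is. So $\partial$ is $2$-to-$1$. Continuity of $\partial$ in the flat/$\mathbf{F}$ topologies is standard, and the free $\mathbb{Z}_2$-action $U\mapsto M-U$ is a homeomorphism commuting with $\partial$. To promote this to a \emph{covering} in the $\mathbf{F}$-topology I would use the isoperimetric Lemma~\ref{Lem: isoperimetric} above: if $\partial U_1$ and $\partial U_2$ are $\mathcal{F}$-close (less than $\nu_M/2$) and $\mathbf{F}(U_1,U_2)$ is small, then $\partial(U_1-U_2)=0$ and the unique $G$-invariant isoperimetric filling of the boundary $\partial U_1-\partial U_2$ coincides with $U_1-U_2$, forcing the two sheets $U$ and $M-U$ to be separated by a uniform $\mathbf{F}$-distance. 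This provides the local triviality of $\partial$ over $\mathbf{F}$-balls in $\mathcal{Z}_n^G$ and the homotopy lifting property for maps from CW complexes.

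\textbf{Step 2 (weak contractibility of $\mathbf{I}_{n+1}^G$).} Fix a $G$-invariant smooth function $f:M\to[0,1]$ with $f^{-1}(0)$, $f^{-1}(1)$ of measure zero (for instance, an equivariant Morse function rescaled; existence is guaranteed by Wasserman \cite{wasserman1969equivariant}). Define
\[
H:\mathbf{I}_{n+1}^G(M;\mathbb{Z}_2)\times [0,1]\longrightarrow \mathbf{I}_{n+1}^G(M;\mathbb{Z}_2),\qquad H(U,s)=U\cap\{f\le s\}.
\]
Since $f$ is $G$-invariant, each slice $\{f\le s\}$ is a $G$-set, so $H$ preserves $G$-invariance. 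By the slicing theory (coarea) of \cite[\S4.3]{simon1983lectures}, $s\mapsto U\cap\{f\le s\}$ is $\mathbf{F}$-continuous in $s$ for a.e.\ $s$, and after a mild reparametrization (or by sup-ing $f$ with mollified level sets) one gets full continuity jointly in $(U,s)$ in the $\mathbf{F}$-topology. At $s=0$ one obtains the zero chain and at $s=1$ the identity, so $\mathbf{I}_{n+1}^G(M;\mathbb{Z}_2)$ deformation retracts to a point.

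\textbf{Step 3 (conclusion).} The long exact sequence associated to the double cover $\mathbb{Z}_2 \to \mathbf{I}_{n+1}^G \xrightarrow{\partial} \mathcal{Z}_n^G$, combined with $\pi_k(\mathbf{I}_{n+1}^G)=0$ for all $k\ge 0$, yields $\pi_1(\mathcal{Z}_n^G(M;\mathbb{Z}_2),0)\cong\mathbb{Z}_2$ and $\pi_k(\mathcal{Z}_n^G(M;\mathbb{Z}_2),0)=0$ for $k\ge 2$. Whitehead's theorem (in the form for spaces of the homotopy type of a CW complex, or more safely, purely weak equivalence with $\mathbb{RP}^\infty$) finishes the proof.

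\textbf{Main obstacle.} Steps~1 and~3 are essentially bookkeeping once the equivariant isoperimetric filling is in hand. The delicate point is \textbf{Step~2}: one needs a genuinely continuous (not merely a.e.\ continuous) $G$-equivariant nullhomotopy on $\mathbf{I}_{n+1}^G(M;\mathbb{Z}_2)$ in the $\mathbf{F}$-topology. The challenge is controlling the mass of the boundary slice $\partial U\cap\{f=s\}$ uniformly in $U$; the cleanest fix is to replace the naive intersection by a $G$-equivariant Almgren-type cone construction to the fixed set of a chosen $G$-fixed point component of $\{f\le 0\}$, if it exists, or else to invoke an equivariant version of Almgren's deformation theorem to push arbitrary $U\in\mathbf{I}_{n+1}^G$ to the canonical one continuously. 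This is exactly where the equivariant analogue of the Almgren isomorphism theorem requires more care than in \cite{marques2021morse}.
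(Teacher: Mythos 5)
Your proposal follows exactly the same route as the paper: the paper's proof is a one-liner observing that the argument of \cite[Theorem 5.1]{marques2021morse} carries over once one has a $G$-equivariant Morse function (Wasserman) and the $G$-invariant isoperimetric Lemma \ref{Lem: isoperimetric} in place of \cite[Corollary 1.14]{almgren1962homotopy}. Your Steps 1--3 spell out precisely what that transfer amounts to, and the ``main obstacle'' you flag (continuity of the sublevel-set contraction) is the same point resolved in Marques--Neves by the Morse-function structure, which you correctly replace by its equivariant analogue.
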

\begin{proof}
	By the equivariant Morse theory, there exists a $G$-equivariant Morse function $f:M\to [0,1]$ in the sense of \cite{wasserman1969equivariant}. 
	Hence, the proof of \cite[Theorem 5.1]{marques2021morse} would carry over with the $G$-Invariant Isoperimetric Lemma \ref{Lem: isoperimetric} in place of \cite[Corollary 1.14]{almgren1962homotopy}. 
\end{proof}

\subsection{Equivariant variations}

Recall a varifold $V\in\V_k(M)$ is said to be {\em stationary} in an open set $U\subset M$ if 
\[\delta V(X):= \frac{d}{dt}\Big|_{t=0} \|(\phi_t)_\# V\| (M) =\int {\rm div}_SX(x)dV(x,S) =0,\]
for all $X\in\mathfrak{X}(M)$ compactly supported in $U$, where $\{\phi_t\}$ are the diffeomorphisms generated by $X$. 
In the equivariant case, we only need to consider the variations generated by $X\in \mathfrak{X}^G(M)$ and have the following definition:
\begin{definition}
	Let $V\in \V_k^G(M)$ be a $G$-varifold and $U\subset M$ be an open $G$-set. 
	Then $V$ is said to be {\em $G$-stationary in $U$} if $\delta V(X) = 0$ for all $X\in \mathfrak{X}^G(M)$ compactly supported in $U$. 
\end{definition}

For any $X\in \mathfrak{X}(M)$, define the averaged vector field of $X$ by 
\begin{equation}\label{Eq: average vector field}
	X_G = \int_G d(g^{-1})(X) d\mu(g),
\end{equation}
which is a $G$-invariant vector field since 
\[dh(X_G) = \int_G d(h\circ g^{-1}) (X) d\mu(g) = \int_G d((g\circ h^{-1})^{-1}) (X) d\mu(g\circ h^{-1}) = X_G,\quad \forall h\in G.\]
Using this averaging trick, we have the following lemma (see \cite[Lemma 3.2]{liu2021existence}). 
\begin{lemma}\label{Lem: G-stationary}
Given any $G$-varifold $V\in \V_k^G(M)$ and an open $G$-set $U\subset M$, $V$ is stationary in $U$ if and only if $V$ is $G$-stationary in $U$. 
\end{lemma}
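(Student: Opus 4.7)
\medskip

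\noindent\textbf{Proof proposal.} The forward implication is immediate: if $V$ is stationary in $U$ then $\delta V(X)=0$ for every compactly supported $X\in\mathfrak{X}(M)$, so in particular for every $G$-invariant such $X$. All the content is in the converse, and my plan is to exploit the averaging map $X\mapsto X_G$ defined in \eqref{Eq: average vector field}.

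For the converse, fix an arbitrary $X\in\mathfrak{X}(M)$ with compact support contained in $U$, and form $X_G\in\mathfrak{X}^G(M)$. Since $U$ is $G$-invariant and $G$ is compact, $\spt(X_G)\subset G\cdot\spt(X)\subset U$ is compact, so $X_G$ is a legitimate $G$-invariant test vector field and $\delta V(X_G)=0$ by hypothesis. The core step is to show
\[
\delta V(X)=\delta V(X_G).
\]
I would do this by viewing $X_G$ as the $\mu$-average of the pullbacks $\psi_g^*X$, where $\psi_g:M\to M$ is the action of $g$. Because the first variation $Y\mapsto\delta V(Y)=\int\operatorname{div}_S Y\,dV(x,S)$ is linear and continuous in $Y$, I can pass the Haar integral through $\delta V$ (justified by Fubini, using continuity in $g$ and compactness of $G$) to get
\[
\delta V(X_G)=\int_G \delta V(\psi_g^* X)\,d\mu(g).
\]
Next I would show $\delta V(\psi_g^*X)=\delta V(X)$ for every $g\in G$. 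If $\{\phi_t\}$ is the flow of $X$, then $\{\psi_{g^{-1}}\circ\phi_t\circ\psi_g\}$ is the flow of $\psi_g^*X$, so
\[
\delta V(\psi_g^*X)=\tfrac{d}{dt}\Big|_{t=0}\bigl\|(\psi_{g^{-1}})_\#(\phi_t)_\#(\psi_g)_\#V\bigr\|(M).
\]
Using $(\psi_g)_\#V=V$ (since $V$ is a $G$-varifold) and the fact that $G$ acts by isometries, so that $\psi_{g^{-1}}$ preserves the total mass on the $G$-set $M$, this expression simplifies to $\tfrac{d}{dt}|_{t=0}\|(\phi_t)_\#V\|(M)=\delta V(X)$. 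Integrating over $G$ with $\mu(G)=1$ then gives $\delta V(X_G)=\delta V(X)$, and combining with $\delta V(X_G)=0$ finishes the proof.

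The only point that requires care, and what I would regard as the main (mild) obstacle, is justifying the interchange of $\delta V$ with the Haar integral and checking the pullback/flow identity cleanly; everything else is bookkeeping with the $G$-invariance of $V$ and the isometric action. No deeper geometric measure theoretic input is needed beyond the definitions of first variation and of pushforward of varifolds.
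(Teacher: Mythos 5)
Your argument is correct, and it is essentially the same as the cited source (the paper merely defers to Liu's Lemma~3.2, and the surrounding text already sets up exactly the averaging map $X\mapsto X_G$ used there). The two key observations — that $X_G$ is a legitimate compactly supported $G$-invariant test field on $U$, and that $\delta V(\psi_g^*X)=\delta V(X)$ for all $g$ because $(\psi_g)_\#V=V$ and $\psi_{g^{-1}}$ is mass-preserving — are precisely what the standard proof rests on, and your interchange of $\delta V$ with the Haar integral is justified by the continuity, compactness, and finiteness considerations you flag.
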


Suppose $\Sigma\subset M$ is a closed minimal hypersurface, i.e. $\delta\Sigma (X) = 0,~\forall X\in \mathfrak{X}(M)$. 
Let $X\in \mathfrak{X}^\bot (\Sigma)$, where $\mathfrak{X}^\bot (\Sigma)$ is the space of normal vector fields on $\Sigma$ with compact support. 
After extending $X$ to a compactly supported vector field $\widetilde{X}\in\mathfrak{X}(M)$, we have diffeomorphisms $\{\phi_t\}_{t\in(-1,1)}$ generated by $\widetilde{X}$. 
Then the second variation formula of $\Sigma$ is written by: 
\[\delta^2\Sigma(X) = \frac{d^2}{dt^2}\Big|_{t=0}{\rm Area}(\phi_t(\Sigma)) = -\int_\Sigma \langle L_\Sigma X,X \rangle, \]
where $L_\Sigma: \mathfrak{X}^\bot (\Sigma)\to \mathfrak{X}^\bot (\Sigma)$ is the Jacobi operator of $\Sigma$. 
Recall the {\em Morse Index ${\rm Index}(\Sigma)$ of $\Sigma$} is defined as the number of the negative eigenvalues of the Jacobi operator $L_\Sigma$ (counted with multiplicities). 
If $\delta^2\Sigma(X) \geq 0$ for all $X\in \mathfrak{X}^\bot (\Sigma)$, i.e. ${\rm Index}(\Sigma)=0$, then $\Sigma$ is said to be {\em stable}.

Additionally, suppose $\Sigma$ is $G$-invariant. 
Define then 
\[\mathfrak{X}^{\bot,G} (\Sigma) :=\{X\in  \mathfrak{X}^\bot (\Sigma): dg(X)=X,\forall g\in G\}\]
as the space of normal $G$-vector fields on $\Sigma$. 
For any $X\in \mathfrak{X}^{\bot,G}(\Sigma)$ with a compactly supported extension $\widetilde{X}\in\mathfrak{X}(M)$, define $\widetilde{X}_G$ to be the averaged vector field of $\widetilde{X}$ given as in (\ref{Eq: average vector field}). 
Then, $\widetilde{X}_G\in\mathfrak{X}^G(M)$ is an equivariant extension of $X$. 
Moreover, since $G$ acts by isometries and $\Sigma$ is $G$-invariant, we have $L_\Sigma X\in \mathfrak{X}^{\bot,G} (\Sigma)$ for all $X\in \mathfrak{X}^{\bot,G} (\Sigma)$. 
Hence, we can restrict the Jacobi operator on the subspace $\mathfrak{X}^{\bot,G} (\Sigma)$, which gives the following definition:

\begin{definition}\label{Def: G-index G-stable}
	Let $\Sigma\subset M$ be a smooth closed embedded $G$-invariant minimal hypersurface. 
	The {\em equivariant Morse index} (or {\em $G$-index} for simplicity) ${\rm Index}_G(\Sigma)$ is defined as the number of negative eigenvalues (counted with multiplicities) of the Jacobi operator $L_\Sigma$ acting on the subspace $\mathfrak{X}^{\bot,G} (\Sigma)$. 
	If $\delta^2\Sigma(X)\geq 0$ for all $X\in \mathfrak{X}^{\bot,G} (\Sigma)$, then we say $\Sigma$ is {\em $G$-stable}. 
\end{definition}

Clearly, $\Sigma$ is $G$-stable if and only if ${\rm Index}_G(\Sigma)=0$. In Section \ref{Sec: G-index estimates}, we will generalize the $G$-index for smooth embedded minimal $G$-hypersurfaces to the varifold sense.

\section{Equivariant generic metrics}\label{Sec: generic metric}

In this section, we extend the bumpy metrics theorem of White \cite{white2017bumpy} to a more general case of compact Lie group actions. 
First, we have the following structure theorem, which is parallel to \cite[Theorem 2.3]{white2017bumpy}. 
To be consistent with our previous notations, we exchange the 
symbols `$N$' and `$M$' in \cite{white2017bumpy}. 
\begin{theorem}[Structure Theorem]\label{Thm: structure theorem}
	Let $M^{n+1}$ be a connected closed smooth manifold with a compact Lie group $G$ acting as diffeomorphisms. 
	Suppose ${\rm Cohom}(G)\geq 2$. 
	Given any integer $k\geq 3$, let $\Gamma_k$ be the space of $G$-invariant  $C^k$ Riemannian metrics on $M$. 
	Denote $\mathcal{N}_k$ as the space of pairs $(\gamma, N)$ where $\gamma\in\Gamma_k$ and $N$ is a closed embedded $G$-invariant $\gamma$-minimal hypersurface in $M$. 
	Then $\mathcal{N}_k$ is separable $C^{k-2}$ Banach manifold, and the map 
	\begin{eqnarray*}
		&& \pi : \mathcal{N}_k\rightarrow \Gamma_k
		\\
		&& \pi(\gamma, N) = \gamma
	\end{eqnarray*} 
	is a $C^{k-2}$ Fredholm map of Fredholm index $0$. 
	Additionally, $(\gamma, N)$ is a critical point of $\pi$ if and only if $N$ has a nontrivial $G$-invariant Jacobi field. 
\end{theorem}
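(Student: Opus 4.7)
The plan is to parametrize a neighborhood of any fixed $(\gamma_0,N_0)\in \mathcal{N}_k$ as the zero set of a Fredholm map and invoke the implicit function theorem, taking care that the whole setup respects $G$-invariance. Working in a $G$-invariant tubular neighborhood of $N_0$, I parametrize nearby $G$-hypersurfaces as $G$-equivariant sections of the normal line bundle of $N_0$ (passing to the orientation double cover if $N_0$ is one-sided, and twisting by the appropriate $\mZ_2$-representation if $G$ does not preserve a transverse orientation). Fixing $\alpha\in(0,1)$ and a small neighborhood $\mathcal{U}_G\subset C^{k,\alpha}_G(N_0)$ of the zero section, define
\[F\colon \Gamma_k\times \mathcal{U}_G\longrightarrow C^{k-2,\alpha}_G(N_0),\qquad F(\gamma,u):=H_\gamma\bigl(\mathrm{graph}(u)\bigr)\]
pulled back to $N_0$; the value is $G$-invariant since both $\gamma$ and the graph are. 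A neighborhood of $(\gamma_0,N_0)$ in $\mathcal{N}_k$ is exactly $F^{-1}(0)$, so it suffices to prove $F$ is a $C^{k-2}$ submersion at $(\gamma_0,0)$.

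Standard computations give that $F$ is of class $C^{k-2}$ and the vertical derivative $D_2F(\gamma_0,0)$ is the Jacobi operator $L_{N_0}$ restricted to $G$-equivariant sections. Since $L_{N_0}$ is elliptic, formally self-adjoint, and commutes with $G$, this restriction is Fredholm of index $0$, with kernel and cokernel both equal to the space of $G$-invariant Jacobi fields on $N_0$. By the Fredholm alternative, surjectivity of $DF(\gamma_0,0)$ reduces to ruling out any nonzero $G$-invariant Jacobi field $\phi$ that is $L^2(N_0)$-orthogonal to $D_1F(\gamma_0,0)(h)$ for every $G$-invariant $h\in T_{\gamma_0}\Gamma_k$. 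Given such a $\phi$, the non-equivariant argument of White (\cite[Theorem 2.3]{white2017bumpy}; \cite{white1991space}) furnishes a not-necessarily-invariant perturbation $h_0$ with $\int_{N_0}D_1F(h_0)\,\phi\,dA\neq 0$. The averaged perturbation $h:=\int_G g^{\ast}h_0\,d\mu(g)$ is $G$-invariant by bi-invariance of $\mu$, and naturality of mean curvature under isometries gives the pointwise identity $D_1F(g^{\ast}h_0)(p)=D_1F(h_0)(g(p))$; combining this with the change of variable $q=g(p)$ and the $G$-invariance of $\phi$ and of $dA$,
\[\int_{N_0}D_1F(h)\,\phi\,dA=\int_G\!\int_{N_0}D_1F(h_0)(g(p))\,\phi(p)\,dA(p)\,d\mu(g)=\int_{N_0}D_1F(h_0)\,\phi\,dA\neq 0,\]
contradicting the orthogonality assumption. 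Hence $DF(\gamma_0,0)$ is surjective.

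The implicit function theorem then exhibits $F^{-1}(0)$ as a $C^{k-2}$ Banach submanifold of $\Gamma_k\times \mathcal{U}_G$, so $\mathcal{N}_k$ is a separable $C^{k-2}$ Banach manifold and $\pi$ is of class $C^{k-2}$. At $(\gamma_0,N_0)$ the tangent space equals $\{(h,u):L_{N_0}u+D_1F(\gamma_0,0)(h)=0\}$, and $d\pi\colon(h,u)\mapsto h$ has finite-dimensional kernel $\{(0,u):u\in\ker L_{N_0}|_G\}$ together with cokernel canonically isomorphic to the cokernel of $D_2F(\gamma_0,0)$; by self-adjointness these spaces have equal dimension, so $\pi$ is Fredholm of index $0$, and $(\gamma_0,N_0)$ is a critical point of $\pi$ precisely when $\ker L_{N_0}|_G\neq 0$, i.e.\ when $N_0$ carries a nontrivial $G$-invariant Jacobi field. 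The main obstacle is the equivariant surjectivity step, since the space of $G$-invariant metric perturbations is much smaller than the full space of metric variations; the averaging trick above resolves this, exploiting the crucial fact that pairing against a $G$-invariant Jacobi field is unchanged by symmetrizing the perturbation.
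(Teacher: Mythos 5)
Your proposal is correct and follows essentially the same route as the paper's: parametrize nearby $G$-hypersurfaces as $G$-equivariant sections of the normal bundle, set up White's Fredholm framework equivariantly, and resolve the surjectivity of the metric derivative by averaging a non-equivariant perturbation over $G$ and observing that pairing against the $G$-invariant Jacobi field $\phi$ is preserved. The paper's proof is terser (it mostly cites \cite[Theorem 2.1]{white1991space} and lists modifications, explicitly averaging the conformal factor $f$ rather than the general perturbation $h_0$), whereas you spell out the naturality identity $D_1F(g^{\ast}h_0)(p)=D_1F(h_0)(g(p))$ and the change-of-variables computation, which is the correct justification and is implicit in the paper.
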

\begin{proof}
	Firstly, fix the diffeomorphism type of $N$. 
	Then we only need to show an equivariant version of \cite[Theorem 2.1]{white1991space} for compact Lie group $G$. 
	Indeed, once we replace the notations `metric', `manifold', and `embedding' in \cite[Theorem 2.1]{white1991space} with `$G$-invariant metric', `$G$-manifold', and `$G$-equivariant embedding', the proof of \cite[Theorem 2.1]{white1991space} would carry over. 
	Here, we only list some necessary modifications. 
	
	By \cite[Corollary 1.12]{wasserman1969equivariant}, any equivariant $C^{2,\alpha}$ embedding $w_0: N\to M$ can be approximated by a $C^\infty$ equivariant map $\iota\in C^\infty(N,M) $. 
	Note all the proper $C^2$ embeddings form an open subset in $C^2(N, M)$ in the Whitney topology (\cite[Chapter 2, Theorem 1.4]{hirsch1976differential}). 
	Hence, $\iota$ is also an embedding. 
	Let $V$ be the normal bundle of $\iota$ over $N$ with $G$ acting on it by the tangent maps. 
	Then the normal exponential map $E: V\to M$ is a $G$-equivariant diffeomorphism in a neighborhood of $\iota(N)$. 
	Thus, for $\iota$ sufficiently close to $w_0$, there is a $G$-equivariant section $u_0\in C^{2,\alpha}(N,V)$ of $V$ so that $w_0 = E\circ u_0$. 
	This is parallel to the first paragraph in the proof of \cite[Theorem 2.1]{white1991space}. 
	
	Since the Riemannian metric $\gamma$ is $G$-invariant, i.e. $G$ acts by isometries, the $\gamma$-area element functional $A_\gamma(x, u(x), Du(x))$ for any equivariant embedding $E\circ u$ defined in \cite[Page 177]{white1991space} is also $G$-invariant. 
	Thus, by the first variation formula \cite[Page 178]{white1991space}, we have a map $H: \Gamma_k \times C^{2,\alpha}_G(N,V)\to C^{0,\alpha}_G(N,V)$ so that $u\in C^{2,\alpha}_G(N,V)$ is $\gamma$-stationary if and only if $H(\gamma, u)=0$, where $C^{j,\alpha}_G(N,V)$ is the set of $G$-equivariant maps in $C^{j,\alpha}(N,V)$. 
	To apply \cite[Theorem 1.1, 1.2]{white1991space}, we only consider the subspace of equivariant maps $ C^{2,\alpha}_G(N,V)$, $ C^{0,\alpha}_G(N,V)$, $L^2_G(N,V)$. 
	Additionally, we can apply the averaging trick to the function $f$ constructed in \cite[Page 179]{white1991space}, which gives $\hat{f}(x) = \int_G f(g\cdot x)d\mu(g)$ and $\nabla\hat{f} = \int_G d(g^{-1}) (\nabla f) d\mu(g) $. 
	By the $G$-invariance of $\gamma_0$, $E$, and $\kappa\in {\ker}D_2H(\gamma_0, u_0)$, one can verify that $\gamma_s(z) = (1+s\hat{f}(z))\gamma_0(z) \in \Gamma_k$ satisfies \cite[Page 178 (C)]{white1991space}. 
	Indeed, it is sufficient to take a $G$-invariant function $\hat{f}$ with $\nabla\hat{f}(E(x,u_0(x)))$ given by the $G$-invariant vector field $D_2E(x,u_0(x))\kappa(x)$. 
	Now, combining the above modifications with the general results \cite[Theorem 1.1, 1.2]{white1991space}, the proof of \cite[Theorem 2.1]{white1991space} would carry over. 
	
	Finally, since there are only countably many diffeomorphism types of $N$, we see $\mathcal{N}_k$ has countably many components, and thus the theorem holds. 
\end{proof}

In what follows, let $\Gamma_k ,\mathcal{N}_k$ be as in Theorem \ref{Thm: structure theorem} and $k\geq 3$ be fixed. 
Denote $\mathcal{C}_{\mathcal{N}_k}$ as the set of critical points of $\pi$.  
For every $p\in\mZ^+$, define $\mathcal{S}^p_k$ by: 
\begin{equation}
	\mathcal{S}^p_k : =\left\{\begin{array}{l|l} (\gamma, N)\in \mathcal{N}_k & \begin{array}{l}\text {There exists a $p$-sheeted covering } N' \text{ of a connected} \\ \text {component of } N \text { so that } N' \text{ has a nontrivial Jacobi field. } \end{array}\end{array}\right\}. \nonumber
\end{equation}
We then have the following lemma, which is parallel to \cite[Lemma 2.6]{white2017bumpy}. 
\begin{lemma}\label{Lem: bumpy}
	$\mathcal{S}^p_k$ is a closed subset of $\mathcal{N}_k$, and $\mathcal{S}^p_k\setminus\mathcal{C}_{\mathcal{N}_k}$ is nowhere dense in $\mathcal{N}_k$. 
\end{lemma}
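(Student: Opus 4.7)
The plan is to adapt the proof of \cite[Lemma 2.6]{white2017bumpy} to the equivariant framework supplied by Theorem \ref{Thm: structure theorem}. The two assertions are essentially independent: closedness of $\mathcal{S}^p_k$ reduces to a standard compactness argument, while the nowhere-density statement requires a transversality computation with $G$-invariant metric perturbations, which is where the equivariant novelty enters.

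For closedness, let $(\gamma_i, N_i) \to (\gamma, N)$ in $\mathcal{N}_k$ with each $(\gamma_i, N_i) \in \mathcal{S}^p_k$, so there is a connected component $N_i' \subseteq N_i$ and a $p$-sheeted cover $\tilde N_i$ of $N_i'$ carrying a non-trivial Jacobi field $\kappa_i$, normalized by $\|\kappa_i\|_{L^2(\tilde N_i)} = 1$. Theorem \ref{Thm: structure theorem} gives smooth convergence $N_i \to N$ as embeddings; since there are only finitely many conjugacy classes of index-$p$ subgroups of the fundamental group of any connected component of $N$, after passing to a subsequence the covers $\tilde N_i$ converge smoothly to some $p$-sheeted cover $\tilde N$ of a component of $N$. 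The coefficients of $L_{\tilde N_i}$ then converge to those of $L_{\tilde N}$, and combining standard elliptic $L^2$-estimates with Rellich--Kondrachov compactness produces a non-trivial limit $\kappa$ with $L_{\tilde N}\kappa = 0$, proving $(\gamma,N) \in \mathcal{S}^p_k$.

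For the nowhere-density of $\mathcal{S}^p_k \setminus \mathcal{C}_{\mathcal{N}_k}$, fix $(\gamma_0, N_0)$ in this set, a distinguished $p$-cover $q:\tilde N_0 \to N_0'$ of a component $N_0'\subseteq N_0$, and a non-trivial Jacobi field $\kappa_0$ on $\tilde N_0$. Because $\pi$ is a local diffeomorphism at $(\gamma_0, N_0)$ by Theorem \ref{Thm: structure theorem}, there is a smooth local section $\gamma \mapsto N(\gamma)$ on a $\Gamma_k$-neighbourhood $\mathcal{U}$ of $\gamma_0$; pulling back the covering data we obtain a smooth family $\tilde N(\gamma)$ with $\tilde N(\gamma_0)=\tilde N_0$. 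It suffices to show that
\[
\mathcal{B} := \{\gamma \in \mathcal{U} : \ker L_{\tilde N(\gamma)} \neq 0\}
\]
is nowhere dense in $\mathcal{U}$, since near $(\gamma_0,N_0)$ the set $\mathcal{S}^p_k \setminus \mathcal{C}_{\mathcal{N}_k}$ is a finite union of such $\mathcal{B}$'s, one for each cover-type. Consider the $G$-invariant analytic family $\gamma_s=(1+s\hat f)\gamma_0$ with $\hat f(x)=\int_G f(g\cdot x)\,d\mu(g)$ for $f \in C^\infty(M)$. A first-order perturbation computation parallel to the one in the proof of Theorem \ref{Thm: structure theorem} (and \cite[Theorem 2.1]{white1991space}) gives that the eigenvalue branch $\lambda(s)$ passing through $0$ along $\kappa_0$ satisfies $\lambda'(0) = c\int_{\tilde N_0}(\hat f\circ q)\,|\kappa_0|^2\,d\mathrm{vol}$ for an explicit non-zero constant $c$. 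Setting $\phi:= q_*(|\kappa_0|^2)$ (the fibrewise sum, a non-negative non-trivial function on $N_0'$ extended by zero to $N_0$) and using $G$-invariance of the volume on $N_0$, this rewrites as $\int_{N_0} f\cdot\hat\phi\,d\mathrm{vol}$ with $\hat\phi(y)=\int_G \phi(g^{-1}\cdot y)\,d\mu(g)$. Picking $y_0\in N_0'$ with $\phi(y_0)>0$, continuity of the $G$-action together with positivity of Haar measure on the open set $\{g\in G : g^{-1}\cdot y_0\in N_0'\cap\{\phi>0\}\}$ forces $\hat\phi(y_0)>0$; hence a bump $f$ localised at $y_0$ makes $\lambda'(0)\neq 0$, and by analyticity $\lambda(s)\neq 0$ for small $s\neq 0$. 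Running this argument over a basis of the finite-dimensional kernel at $\gamma_0$ and invoking Kato's perturbation theorem shows that $\mathcal{B}$ is contained in a finite union of proper closed real-analytic subsets of $\mathcal{U}$, hence is nowhere dense.

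\textbf{Main obstacle.} The only genuinely new difficulty compared with \cite{white2017bumpy} is ensuring that a $G$-\emph{invariant} conformal perturbation of $\gamma_0$ can couple non-trivially to a Jacobi field $\kappa_0$ on a cover that need not be $G$-invariant and whose base $N_0'$ may intersect the non-principal orbit stratum of $M$. The averaging construction succeeds only if the pushed-forward density $\phi$ is non-zero on an open subset of the principal orbit stratum of $N_0'$; this ultimately follows from unique continuation for the linear elliptic Jacobi equation on $\tilde N_0$, which guarantees that $\kappa_0$ is not concentrated on the lower-dimensional singular stratum. Once this point is justified, all remaining steps are formal adaptations of the classical bumpy-metrics machinery.
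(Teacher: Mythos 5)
Your proposal follows the same route as the paper's own (very brief) proof, which simply states that the local metric perturbation argument of White's Lemma 2.6 carries over once the finite averaging $F(y)=\tfrac{1}{|G|}\sum_g f(gy)$ used on page 1152 of \cite{white2017bumpy} is replaced by the Haar-integral version $\int_G f(g\cdot y)\,d\mu(g)$; you have essentially written out that carry-over. Both the closedness argument (finitely many $p$-cover types, elliptic estimates, passing to a limit Jacobi field) and the nowhere-density argument (local section, $G$-invariant conformal perturbations, pushing forward $|\kappa_0|^2$ and showing the averaged density is somewhere positive) match what the cited argument does.

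One caveat: your concluding \textbf{Main obstacle} paragraph is not correct as stated, though it is harmless. You claim that ``the averaging construction succeeds only if the pushed-forward density $\phi$ is non-zero on an open subset of the principal orbit stratum of $N_0'$,'' and invoke unique continuation to rule out concentration of $\kappa_0$ on the singular stratum. Neither point is needed, and the ``only if'' is false: the argument you already gave works verbatim at an arbitrary $y_0$ with $\phi(y_0)>0$. Since $G$ permutes the (finitely many) components of $N_0$, the stabilizer $G_{N_0'}$ is an open finite-index subgroup; the set $\{g\in G: g^{-1}y_0\in N_0'\cap\{\phi>0\}\}$ is then an open subset of $G_{N_0'}$ containing $e$, hence has positive Haar measure, and so $\hat\phi(y_0)>0$. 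The only input is that $\kappa_0$ is continuous and not identically zero, so $\phi$ is positive on a non-empty open subset of $N_0'$ — no unique continuation and no reference to the orbit-type stratification is required. The rest of the paragraph's worry (coupling a $G$-invariant perturbation to a non-$G$-invariant Jacobi field on a cover) is the genuine novelty, and you handled it correctly in the body of the argument; I would simply delete the stratum/unique-continuation discussion.
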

\begin{proof}
	Using $ F(y) = \int_G f(g(y)) d\mu(g)$ in \cite[Page 1152]{white2017bumpy}, the local metric perturbation argument in \cite[Lemma 2.6]{white2017bumpy} would carry over. 
\end{proof}

\begin{definition}[$G$-bumpy]\label{Def: G-bumpy}
	Let $\gamma$ be a $G$-invariant $C^k$ Riemannian metric on $M$. 
	Then we say $\gamma$ is {\em $G$-bumpy} if no finite cover of any closed embedded $G$-invariant minimal hypersurface in $M$ admits a non-trivial Jacobi field. 
\end{definition}
\begin{remark}\label{Rem: bumpy metrics for lower dimensional sub-mfd}
	We can also use minimal $G$-submanifolds of any dimension in the above definition, but it is sufficient to consider only the $G$-hypersurfaces for our purpose. 
	Additionally, although the hypersurface is required to be $G$-invariant in Definition \ref{Def: G-bumpy}, the Jacobi fields need not be $G$-invariant. 
\end{remark}

\subsection{Proof of Theorem \ref{Thm: bumpy metric}}
\begin{proof}[Proof of Theorem \ref{Thm: bumpy metric}]
	First, let $k\geq 3$ be an integer. 
	Suppose $\Sigma$ is a closed embedded $\gamma$-minimal $G$-hypersurface so that there is a positive integer $p$ and a $p$-sheeted covering of $\Sigma$ admitting a nontrivial Jacobi field. 
	Clearly, $(\gamma, \Sigma)\in \mathcal{S}^p_k$, and thus $\gamma\in \pi (\mathcal{S}^p_k)\subset \cup_{p} \pi(\mathcal{S}^p_k)$. 
	Combining Lemma \ref{Lem: bumpy} with \cite[Theorem 2.4]{white2017bumpy}, we have $\cup_{p} \pi(\mathcal{S}^p_k)$ is a meager subset of $\Gamma_k$, which gives the theorem for $k<\infty$. 
	Next, let $W_k\subset \Gamma_k$ be the set of $G$-invariant $G$-bumpy $C^k$ metrics. 
	Then we have $W_k\subset \Gamma_k$ is a second-category subset, and $W_\infty = W_3\cap \Gamma_\infty = \cap_{k\geq 3} (W_k\cap \Gamma_\infty)$. 
	Noting also $\mS_{k+1}^p= \mS_k^p\cap \mathcal{N}_{k+1}$, $\C_{k+1} = \C_k\cap \mathcal{N}_{k+1} $ and $W_k=\Gamma_k\setminus\cup_{p\geq 1} \pi(S_k^p) $, we can use \cite[Theorem 2.4, 2.10]{white2017bumpy}(see also \cite[Section 7.1]{ambrozio2018compactness} for specific constructions) to show $W_\infty\subset \Gamma_\infty$ is a second-category subset. 
\end{proof}

Let $\mathcal{M} $ be the set of closed smooth embedded minimal hypersurfaces in $M$, and $\mathcal{M}_G \subset \mathcal{M} $ contain all the $G$-invariant elements of $\mathcal{M} $. 
Given $I\in \N$ and $C>0$, define:
\begin{equation}\label{Eq: definition}
	\mathcal{M}_G(I,C) := \{ \Sigma\in \mathcal{M}_G ~:~  {\rm Index}_G(\Sigma)\leq I, ~\mathcal{H}^n(\Sigma)\leq C \}. 
\end{equation}
An application of Theorem \ref{Thm: bumpy metric} combined with the compactness theorem \cite[Theorem 2.3]{sharp2017compactness} shows the following corollary:
\begin{corollary}\label{Cor: finite hypersurfaces}
	Let $M^{n+1}$ be a closed smooth manifold with a compact Lie group $G$ acting as diffeomorphisms so that $3\leq n+1\leq 7$ and ${\rm Cohom}(G)\geq 2$. 
	Then for a generic $G$-invariant $C^\infty$ Riemannian metric $\gamma$ on $M$, the sets $\mathcal{M}_G$ and $\mathcal{M}_G(I,C)$ defined in (\ref{Eq: definition}) are both countable. 
	Additionally, this is also valid in arbitrary dimensional $G$-manifolds provided $3\leq {\rm codim}(G\cdot x)\leq7, \forall x\in M$. 
\end{corollary}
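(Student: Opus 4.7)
The plan is to show that for a generic $G$-invariant $C^\infty$ Riemannian metric $\gamma$, every element of $\mathcal{M}_G$ is isolated in the space of closed embedded $G$-invariant minimal hypersurfaces, and then conclude countability by a separability argument.

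First, I would apply Theorem \ref{Thm: bumpy metric} to pick a $G$-bumpy $C^\infty$-generic $\gamma \in \Gamma_\infty$. By the definition of $G$-bumpy (Definition \ref{Def: G-bumpy}), every $\Sigma \in \mathcal{M}_G$, and even every finite cover of $\Sigma$, has trivial Jacobi operator kernel; in particular the kernel of $L_\Sigma$ on $\mathfrak{X}^\perp(\Sigma)$ itself is trivial.

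Second, I would invoke the Structure Theorem \ref{Thm: structure theorem} together with the implicit function theorem to extract isolation. Since $\pi : \mathcal{N}_k \to \Gamma_k$ is a $C^{k-2}$ Fredholm map of index $0$, and since the triviality of the full Jacobi kernel of $\Sigma$ makes $(\gamma, \Sigma)$ a regular point of $\pi$, there is a neighborhood of $(\gamma, \Sigma)$ in $\mathcal{N}_k$ mapped diffeomorphically by $\pi$ onto a neighborhood of $\gamma$ in $\Gamma_k$. Restricting to the fiber $\pi^{-1}(\gamma)$ shows that $\Sigma$ is the unique $G$-invariant closed embedded minimal hypersurface in a $C^{2,\alpha}$-neighborhood of $\Sigma$, i.e.\ $\Sigma$ is isolated in $\mathcal{M}_G$ with respect to the $C^{2,\alpha}$-topology.

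Third, I would derive countability by a stratification. Since $M$ is compact, there are only countably many diffeomorphism types of closed embedded hypersurfaces in $M$, and for each type $T$ the space of $C^{2,\alpha}$ $G$-equivariant embeddings $T \hookrightarrow M$ modulo reparametrization is a separable metric space. The isolation of Step 2 means $\mathcal{M}_G$ meets each such stratum in a discrete subset, which is automatically countable. Taking the union over all (countably many) strata shows that $\mathcal{M}_G$ is countable, and $\mathcal{M}_G(I,C) \subset \mathcal{M}_G$ is countable as a subset. In the arbitrary-dimensional setting under $3 \leq {\rm codim}(G\cdot x) \leq 7$, the same argument applies since the Schoen--Simon regularity on the principal stratum still gives the Banach manifold structure of $\mathcal{N}_k$ needed for the implicit function theorem.

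The main obstacle I anticipate lies in strengthening the statement for $\mathcal{M}_G(I,C)$ from countable to finite: to do so one would want to invoke Sharp's compactness theorem \cite[Theorem 2.3]{sharp2017compactness} for a mass- and index-bounded sequence, combined with the fact that the $G$-invariance passes to varifold limits and that bumpy-metric isolation forces the convergent subsequence to be eventually constant. Since Sharp's theorem is stated in terms of the full Morse index while here we only control the $G$-index, this upgrade requires the equivariant curvature estimates discussed in the introduction, which use $3 \leq {\rm codim}(G\cdot p) \leq 7$ and the existence of a $G$-invariant unit normal on orbit-type neighborhoods. This technical bridge between $G$-index and Morse index is precisely the point at which the proof becomes genuinely equivariant and where the dimension bound is essential.
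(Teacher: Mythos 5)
Your proof is correct but takes a genuinely different route from the paper's. The paper's own argument is compactness-based: it fixes integers $I,C$, considers the auxiliary set $\mathcal{M}_{G,I,C}$ of $G$-hypersurfaces with \emph{ordinary} Morse index at most $I$ (the paper explicitly writes ``not $G$-index'') and area at most $C$, and argues by contradiction that this set must be finite — an infinite sequence would subconverge (by Sharp's compactness theorem, adapted equivariantly) to a $G$-hypersurface carrying a nontrivial Jacobi field, contradicting $G$-bumpiness. Then $\mathcal{M}_G = \bigcup_{K,L}\mathcal{M}_{G,K,L}$ is a countable union of finite sets, and $\mathcal{M}_G(I,C)\subset\mathcal{M}_G$ is countable by inclusion. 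Your proof instead combines the Fredholm structure from Theorem~\ref{Thm: structure theorem} with the inverse function theorem: at a $G$-bumpy metric, every point of the fiber $\pi^{-1}(\gamma)$ is a regular point of the index-zero Fredholm map $\pi$, hence $\pi$ is a local diffeomorphism there, so the fiber is a discrete subset of the separable Banach manifold $\mathcal{N}_k$ — hence countable. What each buys: the paper's argument yields the stronger statement that the auxiliary sets $\mathcal{M}_{G,I,C}$ are actually finite (which the paper needs later), at the cost of requiring the dimension/codimension bounds to invoke Schoen–Simon/Wickramasekera regularity; your argument is more elementary and, for the countability conclusion alone, does not actually need the restriction $3\le n+1\le 7$ or $3\le\mathrm{codim}(G\cdot x)\le 7$ at all, since Theorem~\ref{Thm: structure theorem} only assumes $\mathrm{Cohom}(G)\ge 2$ (your remark that Schoen–Simon regularity is what ``gives the Banach manifold structure'' is a slight misattribution; the manifold structure comes from the elliptic PDE framework of White and holds for smooth embedded hypersurfaces in any dimension).

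One small clarification on your final paragraph: the anticipated obstacle is moot here, because the corollary claims only \emph{countability} of $\mathcal{M}_G(I,C)$, not finiteness. The paper never asserts finiteness of the $G$-index-bounded set $\mathcal{M}_G(I,C)$ — it asserts finiteness only for the full-Morse-index-bounded sets $\mathcal{M}_{G,I,C}$, precisely because Sharp's compactness is controlled by the ordinary index and the gap between $G$-index and full index is, as you observe, not free. So your proof as written already covers everything the corollary claims.
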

\begin{proof}
	By Theorem \ref{Thm: bumpy metric}, for a generic $G$-invariant $C^\infty$ Riemannian metric $\gamma$ on $M$, $\gamma$ is $G$-bumpy in the sense of Definition \ref{Def: G-bumpy}. 
	For fixed integers $C, I>0$, suppose there exist infinitely many $\gamma$-minimal $G$-invariant hypersurfaces $\{\Sigma_i\}_{i=1}^\infty$ with $\mathcal{H}^n(\Sigma_i)\leq C$ and ${\rm Index}(\Sigma_i)\leq I$ (not $G$-index). 
	Then, combining the compactness theorem \cite[Theorem 2.3]{sharp2017compactness} with the $G$-invariance of $\gamma$ and $\Sigma_i$,
	we see $\Sigma_i$ converges (up to a subsequence) in the varifold sense to a $G$-varifold $m\cdot \Sigma$, where $\Sigma$ is a $\gamma$-minimal $G$-hypersurface and $m\in\N$. 
	Moreover, \cite[Theorem 2.3]{sharp2017compactness} also implies the existence of a nontrivial Jacobi field on $\Sigma$ or its double cover, which contradicts the $G$-bumpy property of $\gamma$. 
	Meanwhile, if $3\leq {\rm codim}(G\cdot x)\leq7, \forall x\in M$, one can also use the $G$-invariance and the optimal regularity results (\cite{wickramasekera2014general}) in the proof of \cite[Lemma 4.1, Claim 1, 2]{sharp2017compactness} to show the limit $G$-varifold $m \Sigma =\lim \Sigma_i$ is a smooth $\gamma$-minimal $G$-hypersurface and the convergence is smooth except for at most {\em $I$ points}. 
	Then the Jacobi field constructions in \cite[Theorem 2.3]{sharp2017compactness} would carry over to give a contradiction. 
	
	Hence, the set $\mathcal{M}_{G,I,C}:=\{\Sigma\in \mathcal{M}_G :  {\rm Index}(\Sigma)\leq I, ~\mathcal{H}^n(\Sigma)\leq C\}$ is finite for the $G$-bumpy metric $\gamma$. 
	Finally, the corollary follows from $\mathcal{M}_G(I,C) \subset \mathcal{M}_G = \bigcup_{K,L=1}^{\infty} \mathcal{M}_{G,K,L}$. 
	%
\end{proof}

As we mentioned after Theorem \ref{Thm: bumpy metric}, our $G$-bumpy metric theorem can also be extended to some more general cases. 
For instance, Theorem \ref{Thm: bumpy metric} is also valid after replacing embedded minimal $G$-hypersurfaces in Definition \ref{Def: G-bumpy} by immersed minimal submanifolds $\Sigma$ whose stabilizer $G_\Sigma:=\{g\in G: g\cdot\Sigma=\Sigma\}$ has finite index $[G:G_\Sigma]$ in $G$. 
Indeed, by Remark \ref{Rem: bumpy metrics for lower dimensional sub-mfd}, the $G$-hypersurfaces can be replaced by $G$-submanifolds in the above results. 
Additionally, note $[G:G_\Sigma]<\infty$ indicates $G\cdot \Sigma$ is still an immersed minimal submanifold. 
Hence, combining with the arguments in \cite[Theorem 2.1, 2.9]{white2017bumpy}, the embeddedness and the $G$-invariance can be weaken to immersion and $[G:G_\Sigma]<\infty$ respectively. 

\section{Equivariant min-max theory}\label{Sec: equivariant min-max}

In this section, we introduce the equivariant min-max theory built in \cite{wang2022min} with some modifications.
Firstly, let $I^m = [0,1]^m$ be the $m$-dimensional unit cube. 
For any $j\in \N$, denote by $I(1,j)$ the cell complex with 
\[\mbox{$0$-cells $[0],[3^{-j}]\dots,[1-3^{-1}],[1]$, and $1$-cells $[0,3^{-j}],[3^{-j},2\cdot 3^{-j}],\dots,[1-3^{-j}, 1]$. }\]
Then the $m$-dimensional cubical complex is defined as $I(m,j):=I(1,j)^{\otimes m }$. 
For any $\alpha = \alpha_1\otimes\cdots\otimes\alpha_m\in I(m,j)$, we say $\alpha$ is a $p$-cell of $I(m,j)$ if $\sum_{i=1}^m\dim(\alpha_i) = p$, where each $\alpha_i$ is a cell of $I(1,j)$. 

Moreover, if $X$ is a subcomplex of dimension $k$ of $I(m,j)$ for some $m,j\in\N$, then we say $X$ is a {\em cubical complex} of dimension $k$. 
Let $X(n_i)$ be the union of all cells of $I(m, j+n_i)$ that are supported in some cell of $X$. 
Denote by $X(n_i)_q$ the set of all $q$-cells in $X(n_i)$. 

For any $i,j\in\N$, let ${\bf n}(i,j): X(i)_0\to X(j)_0$ be the nearest projection, i.e. ${\bf n}(i,j)(x)\in X(j)_0$ is the vertex that is closest to $x\in X(i)_0$. 
For any map $\phi: X(j)_0\to \Z_n^G(M;\mZ_2)$, the {\em fineness} of $\phi$ is defined by 
\[ {\bf f}_\M(\phi) := \sup\left\{{\bf M}(\phi(x)-\phi(y)) :  \{x,y\}=  \alpha_0, \alpha\in  X(j)_1 \right\}. \]

\subsection{Equivariant min-max setting}
\begin{definition}
	Let $X$ be a cubical subcomplex of $I^m$ and $\Phi_i:X\to {\mathcal Z}_n^G(M;\mF;\mathbb{Z}_2)$ be two continuous map, $i=1,2$. 
	We say $\Phi_1$ is {\em $G$-homotopic} to $\Phi_2$, if there exists a continuous map $\Psi:[0,1]\times X \to {\mathcal Z}_n^G(M;\F;\mathbb{Z}_2)$ so that $\Psi(i-1,\cdot)=\Phi_i$, $i=1,2$. 
	
	Moreover, for any continuous map $\Phi:X\to {\mathcal Z}_n^G(M;\mF;\mathbb{Z}_2)$, the {\em $G$-homotopy class} ${\bm \Pi}$ of $\Phi$ is the set of all $\Phi':X\to {\mathcal Z}_n^G(M;\mF;\mathbb{Z}_2)$ that are $G$-homotopic to $\Phi$. 
	Denote by $[X, {\mathcal Z}_n^G(M;\mF;\mathbb{Z}_2)]$ the set of all such $G$-homotopy classes. 
\end{definition}
Notice that the homotopy map in our definition is a mapping into the $G$-cycles space $ {\mathcal Z}_n^G(M;\mathbb{Z}_2)$ and is only $\F$-continuous. 

\begin{definition}
	Given any $G$-homotopy class ${\bm \Pi}\in [X, {\mathcal Z}_n^G(M;\mF;\mathbb{Z}_2)]$, the {\em width} of ${\bm \Pi}$ is defined as 
	\[\mathbf{L}({\bm \Pi}) := \inf_{\Phi\in {\bm \Pi}} \sup_{x\in X} \M(\Phi(x)).\]
	Additionally, a sequence $\{\Phi_i \}_{i\in\N}\subset {\bm \Pi}$ is said to be a {\em min-max sequence} if 
	\[\limsup_{i\to\infty}\sup_{x\in X}\M(\Phi_i(x))  =  \mathbf{L}({\bm \Pi}). \]
\end{definition}
\begin{definition}
	Given any $G$-homotopy class ${\bm \Pi}\in [X, {\mathcal Z}_n^G(M;\mF;\mathbb{Z}_2)]$ and a sequence $\{\Phi_i \}_{i\in\N}\subset {\bm \Pi}$, the {\em image set} of $\{\Phi_i \}_{i\in\N}$ is defined as
	\begin{eqnarray*}
		\mathbf{\Lambda} (\{\Phi_i \}_{i\in\N} ) := \{V \in \mathcal{V}_{n}^G(M): V=\lim_{j\to\infty} |\Phi_{i_{j}}(x_{i_{j}})| \mbox{~for some $i_j\to\infty,x_{i_j}\in X$}\}. 
	\end{eqnarray*}
	Moreover, if $\{\Phi_i \}_{i\in\N}$ is a min-max sequence, we define the {\em critical set} of $\{\Phi_i \}_{i\in\N}$ by 
	\[\mathbf{C}(\{\Phi_i \}_{i\in\N}) := \{V\in\mathbf{\Lambda} (\{\Phi_i \}_{i\in\N} ) : ||V||(M) = \mathbf{L}({\bm \Pi}) \}  .\]
\end{definition}

To apply the equivariant Almgren-Pitts min-max theory, we also need the following discrete homotopy analogies. 
\begin{definition}
	Let $X$ be a cubical subcomplex of $I^m$. 
	Given any two maps $\phi_i: X(n_i)_0 \to \Z_n^G(M;\mZ_2)$, $i=1,2$, we say 
	\[\mbox{{\em $\phi_1$ is homotopic to $\phi_2$ in $\Z_n^G(M;\M;\mZ_2)$ with fineness $\delta$}},\]
	if there exists another map $\psi: I(1, n_3)\times X(n_3)_0\to \Z_n^G(M;\mZ_2)$ so that $\mf_\M(\psi)<\delta$ and $\psi([i-1], \cdot) = \phi_i\circ {\bf n}(n_3, n_i)$, $i=1,2$. 
	
	Additionally, let $S=\{\varphi_i\}_{i\in\N}$ be a sequence of maps $\varphi_i : X(n_i)_0\to \Z_n^G(M;\mZ_2)$ so that $n_i\to\infty $ and $\mf_\M(\varphi_i)\to 0$. 
	Then we define 
	\begin{itemize}
		\item ${\bf L}(S):= \limsup_{i\to\infty} \max \{ \M(\varphi_i(x)) : x\in X(n_i)_0 \} $;
		\item $\mathbf{\Lambda}(S) := \{ V \in \mathcal{V}_{n}^G(M): V=\lim_{j\to\infty} |\varphi_{i_{j}}(x_{i_{j}})| \mbox{~for some $i_j\to\infty,x_{i_j}\in X(n_{i_j})_0$} \} $;
		\item $\mathbf{C}(S) := \{V\in\mathbf{\Lambda} (S) : ||V||(M) = \mathbf{L}(S) \} $.
	\end{itemize}
\end{definition}

By Lemma \ref{Lem: G-stationary}, for $C=2\mathbf{L}({\bm \Pi})$, we can apply the constructions in \cite[Page 150]{pitts2014existence} with $\mathfrak{X}^G(M)$ in place of $\mathfrak{X}(M)$ to obtain a continuous map 
\begin{eqnarray}\label{Eq-pulltightmap}
		H:~I\times \big(\mathcal{Z}_n^G(M;{\bf F};\mathbb{Z}_2)&\cap & \{T :  {\bf M}(T)\leq C \} \big)\nonumber
		\\
		&\rightarrow & \mathcal{Z}_n^G(M;{\bf F};\mathbb{Z}_2)\cap  \{T :  {\bf M}(T)\leq C \},
\end{eqnarray}
such that: 
	\begin{itemize}
		\item $H(0, T)= T$;
		\item if $|T|$ is stationary, then $H(t, T)= T$ for all $t\in[0,1]$;
		\item if $|T|$ is not stationary, then ${\bf M}(H(1, T)) < \M(T)$. 
	\end{itemize}

\begin{proposition}\label{Prop:pulltight}
	Let ${\bf \Pi}\in \big[ X, \Z_{n}^G(M;\mF;\mZ_2)\big]$ be a continuous $G$-homotopy class. 
	For any min-max sequence $\{\Phi_i^*\}_{i\in\N}$ in ${\bf \Pi}$, there exists a {\em pulled-tight} min-max sequence $\{\Phi_i\}_{i\in\N}$ of ${\bf \Pi}$ such that
	\begin{itemize}
		\item[(i)] ${\bf C}(\{\Phi_i\}_{i\in\N}) \subset {\bf C}(\{\Phi_i^*\}_{i\in\N})$;
		\item[(ii)] every $G$-varifold $V\in {\bf C}(\{\Phi_i\}_{i\in\N})$ is stationary in $M$.
	\end{itemize}
\end{proposition}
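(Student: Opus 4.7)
The plan is to run the standard Pitts pull-tight argument, now in the $G$-equivariant setting, using the map $H$ of (\ref{Eq-pulltightmap}). Since Lemma \ref{Lem: G-stationary} identifies $G$-stationarity with ordinary stationarity, $H$ strictly contracts the mass of non-stationary $G$-cycles while fixing the stationary ones, and that is the only property I shall invoke. The essential idea is to deform $\Phi_i^*$ only where its mass is close to the width $\mathbf{L}({\bm \Pi})$, so that low-mass slices of the sweepout are left untouched and conclusion (i) becomes almost automatic.

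Concretely, I would pick $\epsilon_i \searrow 0$ with $\sup_{x \in X} \mathbf{M}(\Phi_i^*(x)) < \mathbf{L}({\bm \Pi}) + \epsilon_i$, and a continuous cutoff $\eta_i : [0,C] \to [0,1]$ (with $C = 2\mathbf{L}({\bm \Pi})$) satisfying $\eta_i \equiv 0$ on $[0, \mathbf{L}({\bm \Pi}) - 2\epsilon_i]$ and $\eta_i \equiv 1$ on $[\mathbf{L}({\bm \Pi}) - \epsilon_i, C]$. I then set
$$\Phi_i(x) := H\!\left(\eta_i\!\left(\mathbf{M}(\Phi_i^*(x))\right),\, \Phi_i^*(x)\right).$$
Continuity of $H$ and of $\mathbf{M}$ gives $\mathbf{F}$-continuity of $\Phi_i$ into $\mathcal{Z}_n^G(M;\mathbf{F};\mathbb{Z}_2)$, and the straight-line homotopy $s \mapsto H(s\,\eta_i(\mathbf{M}(\Phi_i^*(x))), \Phi_i^*(x))$, $s\in[0,1]$, shows $\Phi_i \in {\bm \Pi}$. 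Because $H$ is mass non-increasing, $\{\Phi_i\}$ is still a min-max sequence.

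For (ii) I argue by contradiction. Given $V \in \mathbf{C}(\{\Phi_i\})$ non-stationary, write $V = \lim_j |\Phi_{i_j}(x_{i_j})|$ and, after extracting a further subsequence, $V^* = \lim_j |\Phi_{i_j}^*(x_{i_j})|$. The pointwise inequality $\mathbf{M}(\Phi_{i_j}(x_{i_j})) \le \mathbf{M}(\Phi_{i_j}^*(x_{i_j}))$ together with $\sup_x \mathbf{M}(\Phi_{i_j}^*(x)) \to \mathbf{L}({\bm \Pi})$ forces $\|V^*\|(M) = \|V\|(M) = \mathbf{L}({\bm \Pi})$ and $\eta_{i_j}(\mathbf{M}(\Phi_{i_j}^*(x_{i_j}))) = 1$ for large $j$. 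By Lemma \ref{Lem: G-stationary} there exists $X \in \mathfrak{X}^G(M)$ with $\delta V(X) < 0$, and the Pitts construction of $H$ (cf.\ \cite{pitts2014existence}) delivers a uniform $\tau > 0$ such that $\mathbf{M}(H(1,T)) \le \mathbf{M}(T) - \tau$ whenever $|T|$ lies in a sufficiently small $\mathbf{F}$-neighborhood of $V$. Applied to $T = \Phi_{i_j}^*(x_{i_j})$ this gives $\mathbf{M}(\Phi_{i_j}(x_{i_j})) \le \mathbf{L}({\bm \Pi}) - \tau/2$ eventually, contradicting $\|V\|(M) = \mathbf{L}({\bm \Pi})$.

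Conclusion (i) then drops out: once $V$ is known to be stationary, the continuity of $H$ together with its fixing of stationary cycles forces $V^* = V$ in the subsequence above, so $V = V^* \in \mathbf{C}(\{\Phi_i^*\})$. The hard part is not the equivariance itself, which reduces to the non-equivariant case via Lemma \ref{Lem: G-stationary} and the averaging trick already built into $H$, but rather the extraction of the uniform mass drop $\tau$ on an $\mathbf{F}$-neighborhood of a single non-stationary $G$-varifold; this uniformity is precisely what the Pitts pull-tight construction delivers, and it transfers verbatim once $\mathfrak{X}(M)$ is replaced by $\mathfrak{X}^G(M)$ throughout.
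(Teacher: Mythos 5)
The paper's proof is considerably simpler: it takes $\Phi_i(x):=H(1,\Phi_i^*(x))$ with no cutoff at all, and then both (i) and (ii) follow from the three listed properties of $H$ together with $\mathbf{F}$-compactness. Your cutoff is reminiscent of other tight-ening constructions in the literature, but here it is not needed and, worse, it introduces a genuine gap.

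The gap is in the step ``forces $\eta_{i_j}(\mathbf{M}(\Phi_{i_j}^*(x_{i_j})))=1$ for large $j$.'' You only know $\mathbf{M}(\Phi_{i_j}^*(x_{i_j}))\to\mathbf{L}({\bm\Pi})$, while $\eta_{i_j}\equiv 1$ only on $[\mathbf{L}({\bm\Pi})-\epsilon_{i_j},C]$ with $\epsilon_{i_j}\to 0$. Two sequences tending to zero need not be comparable: it is perfectly possible that $\mathbf{L}({\bm\Pi})-\mathbf{M}(\Phi_{i_j}^*(x_{i_j}))>\epsilon_{i_j}$ for all $j$ (take, say, the former of order $1/j$ and $\epsilon_{i_j}$ of order $1/j^2$). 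In that regime $\eta_{i_j}(\mathbf{M}(\Phi_{i_j}^*(x_{i_j})))=0$, so $\Phi_{i_j}(x_{i_j})=\Phi_{i_j}^*(x_{i_j})$ receives no tightening at all, and the limit $V$ can be a non-stationary varifold of mass $\mathbf{L}({\bm\Pi})$; your contradiction never materializes. (The intermediate case $\eta_{i_j}\in(0,1)$ is also delicate, since the stated properties of $H$ quantify the mass drop only at $t=1$.)

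The fix is either to drop the cutoff entirely and set $\Phi_i(x)=H(1,\Phi_i^*(x))$ — then, after passing to an $\mathbf{F}$-convergent subsequence $T_{i_j}:=\Phi_{i_j}^*(x_{i_j})\to T$, continuity of $H$ gives $V=|H(1,T)|$, and the two-sided mass bound $\mathbf{M}(T)=\mathbf{M}(H(1,T))=\mathbf{L}({\bm\Pi})$ forces $|T|$ stationary, $H(1,T)=T$, and $V=|T|\in\mathbf{C}(\{\Phi_i^*\})$ simultaneously — or, if you want to keep a cutoff, to make $\eta_i\equiv 1$ on $[\mathbf{L}({\bm\Pi})-\delta,C]$ for a \emph{fixed} $\delta>0$ independent of $i$, so that any sequence of slice masses converging to $\mathbf{L}({\bm\Pi})$ eventually falls into the region where the full pull-tight is applied. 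Your observation that equivariance reduces to the non-equivariant case via Lemma \ref{Lem: G-stationary} and $\mathfrak{X}^G(M)$ is correct, and your argument for (i) given (ii) is fine; the only error is the quantitative interplay between the cutoff width $\epsilon_i$ and the rate at which the slice masses approach the width.
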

\begin{proof}
	For any $x\in X$, define $\Phi_i(x) = H(1, \Phi_i^*(x)) $. 
	Then the proposition follows from the properties of $H$. 
\end{proof}

After applying the discretization theorem (\cite[Theorem 2]{wang2022min}) to each $\Phi_i$ and passing to a diagonal subsequence, we can obtain a sequence $\{k_i\}_{i\in\N}\subset \N $ with $k_i\to\infty$, and a sequence $S=\{\varphi_i\}_{i\in\N}$ of discrete mappings 
\begin{equation}\label{Eq: discrete pull-tight}
	\varphi_i: X( k_i)_{0}\rightarrow \Z_{n}^G(M ;\mZ_2) 
\end{equation}
satisfying $\lim_{i\to\infty}\mf_\M(\varphi_i)= 0$, the Almgren $G$-extension $\Phi_i'$ of $\varphi_i$ (\cite[Theorem 3]{wang2022min}) is $G$-homotopic to $\Phi_i$, ${\bf L}(S)={\bf L}(\{\Phi_i\}_{i\in\N})={\bf L}({\bf \Pi})$, and ${\bf C}(S)={\bf C}(\{\Phi_i\}_{i\in\N})$. 
We refer to \cite[Proposition 1]{wang2022min} for the details of this discretized procedure.

\subsection{Almost minimizing varifolds}

Proposition \ref{Prop:pulltight} gives a stationary $G$-varifold as a weak solution. 
To obtain minimal $G$-hypersurfaces, let us introduce the following definitions, which generalized the definitions of Pitts in \cite{pitts2014existence}. 
\begin{definition}\label{Def:a.m.deform}
	Given any $\epsilon>0,~\delta>0$, open $G$-set $U\subset M$, and $T\in \mathcal Z_{n}^G(M;\mathbb{Z}_2)$, suppose $\{T_i\}_{i=1}^q\subset \mathcal Z_{n}^G(M;\mathbb{Z}_2)$ is a finite sequence such that 
	\begin{itemize}
		\item $T_0=T$ and ${\rm spt}(T-T_i)\subset U$ for all $i=1,\dots, q;$
		\item ${\bf v}(T_i-T_{i-1})\leq \delta$ for all $i=1,\dots, q;$
		\item ${\bf M}(T_i)\leq {\bf M}(T)+\delta$ for all $i=1,\dots, q;$
		\item ${\bf M}(T_q)< {\bf M}(T)-\epsilon$,
	\end{itemize}
	where ${\bf v}$ is any one of $\mathcal{F},~{\bf F},~{\bf M}$. 
	Then $\{T_i\}_{i=1}^q$ is said to be a {\em $G$-invariant $(\epsilon,\delta)$-deformation} of $T$ in $U$ under the metric ${\bf v}$. 
	Moreover, we define
	\[ \mathfrak{a}^G_n(U;\epsilon,\delta; {\bf v} ) \]
	to be the set of all $G$-cycles $T\in \mathcal Z_{n}^G(M;\mathbb{Z}_2)$ that do not admit any $G$-invariant $(\epsilon,\delta)$-deformation under the metric ${\bf v}$. 
\end{definition}

\begin{definition}\label{Def:a.m.}
	A varifold $V\in \mathcal V_{n}^G(M)$ is said to be {\em $(G,\mathbb{Z}_2)$-almost minimizing} in an open $G$-set $U\subset M$, if for every $\epsilon>0$ there exist $\delta>0$ and
	\[T\in \mathfrak{a}^G_n(U;\epsilon,\delta; \F )\]
	with ${\bf F}(V,|T|)<\epsilon$. 
\end{definition}

\begin{remark}\label{Rem: boundary type}
	Note $T\in \mathcal Z_{n}^G(M;\mathbb{Z}_2)$ is a boundary of some $Q\in{\bf I}^G_{n+1}(M;\mathbb{Z}_2)$. 
	Hence, the varifold in Definition \ref{Def:a.m.} is actually $(G,\mathbb{Z}_2)$-almost minimizing of {\em boundary type} in the sense of \cite[Definition 10]{wang2022min}. 
\end{remark}

\begin{definition}\label{Def:a.m. in regular annuli}
	We say a $G$-varifold $V \in \mathcal{V}^G_n(M)$ is {\it $(G,\mathbb{Z}_2)$-almost minimizing in annuli} if
	for every $p \in M$, there exists $r=r(G\cdot p)>0$ such that $V$ is $(G,\mathbb{Z}_2)$-almost minimizing in $\an (p,s,t)$ for any $0<s<t<r$. 
\end{definition}

In Definition \ref{Def:a.m.}, the flat norm $\F$ is used in $\mathfrak{a}^G_n(U;\epsilon,\delta; {\bf v} ) $ to define the $(G,\mathbb{Z}_2)$-almost minimizing. 
Nevertheless, by the slice representation \cite[Section 2.1.3, ($\Sigma_4$)]{berndt2016submanifolds}, the $G$-annulus $\an(p,s,t)$, $0<s<t<{\rm Inj}(G\cdot p)$, contains no {\em isolated orbit} in the sense of \cite[Definition 2.1]{wang2023min}, and thus it is equivalent to use $\F$, $\mF$, and $\M$ in the above Definition \ref{Def:a.m. in regular annuli} by \cite[Theorem 3.14]{wang2023min}.

\begin{definition}\label{Def: admissible family}
	For any $p\in M$ and $c\geq 1$, let $\mathcal{A}=\{\an (p,s_i,t_i)\}_{i=1}^c$ be a set of $G$-annuli with $0< s_i < t_i < \inj(G\cdot p)$ for all $i=1,\dots,c$. 
	We say $\mathcal{A}$ is a {\em $c$-admissible} family of $G$-annuli, if $0<s_i<t_i<\frac{1}{2}s_{i+1} < \frac{1}{2}t_{i+1}$, for all $i=1,\dots,c-1$. 
\end{definition}

\begin{lemma}\label{Lem: admissible to all annuli}
	Let $V\in\V^G_n(M)$ be a $G$-varifold, $c\geq 1$ be an integer. 
	If for any $c$-admissible family of $G$-annuli $\mathcal{A}$, $V$ is $(G,\mathbb{Z}_2)$-almost minimizing in some $\an\in\mathcal{A}$. 
	Then $V$ is $(G,\mathbb{Z}_2)$-almost minimizing in annuli. 
\end{lemma}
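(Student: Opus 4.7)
The plan is to argue by contradiction using a straightforward nested construction of annuli. Suppose $V$ fails to be $(G,\mathbb{Z}_2)$-almost minimizing in annuli. Then by Definition~\ref{Def:a.m. in regular annuli} there exists a point $p \in M$ such that for every $r > 0$, no matter how small, one can find radii $0 < s < t < r$ with $V$ not $(G,\mathbb{Z}_2)$-almost minimizing in $\an(p,s,t)$. The goal is to convert this failure into a $c$-admissible family $\mathcal{A}$ where $V$ is simultaneously non-almost-minimizing in every annulus of $\mathcal{A}$, contradicting the assumption.

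The construction proceeds inductively from the outside in. First I would pick any $r^{(0)} < \inj(G\cdot p)$ and select a pair $s^{(0)} < t^{(0)} < r^{(0)}$ such that $V$ is not $(G,\mathbb{Z}_2)$-almost minimizing in $\an(p, s^{(0)}, t^{(0)})$. Then set the next scale $r^{(1)} := s^{(0)}/2$ and apply the failure hypothesis at this smaller radius to obtain $s^{(1)} < t^{(1)} < s^{(0)}/2$ with the same non-almost-minimizing property. Iterating $c$ times produces radii satisfying $t^{(j)} < s^{(j-1)}/2$ for $j = 1, \ldots, c-1$. Finally, reindexing by $s_i := s^{(c-i)}$ and $t_i := t^{(c-i)}$ for $i = 1, \ldots, c$ reverses the order so that $t_i < s_{i+1}/2$ for each $i$, yielding precisely a $c$-admissible family in the sense of Definition~\ref{Def: admissible family}.

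By construction, $V$ is not $(G,\mathbb{Z}_2)$-almost minimizing in any annulus $\an(p, s_i, t_i)$ of this family $\mathcal{A}$, which directly contradicts the hypothesis of the lemma. Hence such a point $p$ cannot exist, and $V$ is $(G,\mathbb{Z}_2)$-almost minimizing in annuli.

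There is no serious obstacle: the argument is essentially a pigeonhole/scale-separation trick in the spirit of Pitts~\cite{pitts2014existence}, adapted to the equivariant setting. The only detail worth checking is that the injectivity-radius constraint $t_c < \inj(G\cdot p)$ is respected; this is automatic since every $t^{(j)}$ is bounded by our initial choice $r^{(0)} < \inj(G\cdot p)$ and the scales only shrink as we iterate.
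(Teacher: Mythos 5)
Your proof is correct and takes exactly the approach the paper intends — the paper's proof of this lemma is just the one-line remark that it ``follows immediately from a contradiction argument,'' and your nested construction (negating Definition~\ref{Def:a.m. in regular annuli} at a single orbit, iterating the failure at geometrically separated scales $r^{(j+1)} = s^{(j)}/2$ to build a $c$-admissible family in which $V$ fails to be almost minimizing everywhere, then reindexing from inside out) is precisely what that remark is gesturing at. The reindexing $s_i := s^{(c-i)}$, $t_i := t^{(c-i)}$ correctly translates $t^{(j)} < s^{(j-1)}/2$ into the admissibility condition $t_i < \tfrac{1}{2}s_{i+1} < \tfrac{1}{2}t_{i+1}$, and your final check that all radii stay below $\inj(G\cdot p)$ is the right point to note.
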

\begin{proof}
	This lemma follows immediately from a contradiction argument. 
\end{proof}

By a combinatorial argument of Pitts \cite{pitts2014existence}, we have the following existence theorem. 
\begin{theorem}\label{Thm: exist amv}
	Let $X$ be an $m$-dimensional cubical complex, $S=\{\varphi_i\}_{i\in\N}$ be a sequence of maps 
	$\varphi_i: X( n_i)_{0}\rightarrow \Z_{n}^G(M ;\mZ_2) $ so that $\lim_{i\to\infty} n_i=\infty$, $\lim_{i\to\infty} \mathbf{f}(\varphi_i)=0$, and every $V\in \mathbf{C}(S)$ is stationary in $M$. 
	Suppose for any element $V\in \mathbf{C}(S)$, there is a $(3^m)^{3^m}$-admissible family of $G$-annuli $\mathcal{A}_V$ so that $V$ is not $(G,\mathbb{Z}_2)$-almost minimizing in every $\an\in\mathcal{A}_V$. 
	Then there is another sequence of maps $S^*=\{\varphi_i^*\}_{i\in\N}$, 
	\[\varphi_i^*: X(n_i+l_i)_{0}\rightarrow \Z_{n}^G(M ;\mZ_2)\]
	for some $l_i\in\N$, such that
	\begin{itemize}
		\item $\varphi_i$ and $\varphi_i^*$ are homotopic in $\Z_{n}^G(M; \M ;\mZ_2)$ with fineness tending to zero;
		\item $\mathbf{L}(S^*)< \mathbf{L}(S)$.
	\end{itemize}
\end{theorem}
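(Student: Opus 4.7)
The plan is to adapt the Pitts combinatorial deformation argument \cite[Section 4]{pitts2014existence} to the equivariant discrete setting developed in \cite{wang2022min}. The hypothesis supplies, for each critical $G$-varifold $V\in\mathbf{C}(S)$, an arsenal of $N:=(3^m)^{3^m}$ pairwise well-separated $G$-annuli in which the mass of $\F$-nearby $G$-cycles can be pushed strictly down. The enormous size of $N$ is chosen precisely so that a pigeonhole assignment can route any two neighboring vertices of a sufficiently refined cubical complex $X(n_i+l_i)$ through disjoint $G$-annuli. Since deformations supported in disjoint $G$-sets commute and superimpose without interaction, these local moves can be glued into a single discrete $G$-homotopy with uniformly strictly smaller maximal mass.

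More concretely, one first uses compactness of $\mathbf{C}(S)\subset(\V_n^G(M),\mF)$ to extract a finite cover $\{\overline{\mathbf{B}}^{\mF}_{\epsilon_j/3}(V_j)\}_{j=1}^J$; for each $V_j$ and each $\an\in\A_{V_j}$, the negation of $(G,\mZ_2)$-almost minimizing (Definition~\ref{Def:a.m.}) provides a uniform $\epsilon_j>0$ such that every $G$-cycle within $\mF$-distance $\epsilon_j$ admits a $(\epsilon_j,\delta)$-deformation in $\an$ for all $\delta>0$. Using that $\mathbf{C}(S)$ consists of the accumulation points of maximal mass, pick $\eta>0$ so that for all large $i$, every \emph{bad} vertex $x\in X(n_i)_0$ --- meaning one with $\M(\varphi_i(x))\geq\mathbf{L}(S)-\eta$ --- lies within $\mF$-distance $\epsilon_{j(x)}/2$ of some $V_{j(x)}$. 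After refining by a factor $3^{l_i}$ using the equivariant Almgren interpolation \cite[Theorems 2,3]{wang2022min}, color each bad vertex $x$ with an annulus index $k(x)\in\{1,\ldots,N\}$ drawn from $\A_{V_{j(x)}}$, subject to the constraint that any two bad vertices sharing a cell of $X(n_i+l_i)$ receive annuli with disjoint supports. Because the local combinatorial valence of the refined $m$-complex is controlled by powers of $3^m$ while $N=(3^m)^{3^m}$, such a coloring exists by a greedy assignment.

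Finally, at each bad vertex, apply the chosen $(G,\mZ_2)$-deformation inside its annulus, converting the flat-norm chain of Definition~\ref{Def:a.m.deform} into a mass-fine discrete sub-homotopy via equivariant interpolation, and then concatenate these sub-homotopies cell-by-cell. Because neighboring bad vertices use disjoint annuli, the sub-homotopies commute, and their composition has fineness $o(1)$ as $i\to\infty$ while the maximal mass uniformly drops below $\mathbf{L}(S)-\eta/4$, yielding the map $\varphi_i^*$ with $\mathbf{L}(S^*)<\mathbf{L}(S)$. The principal technical obstacle is precisely this coherent gluing: the $\F$-controlled moves from Definition~\ref{Def:a.m.deform} must be promoted to $\M$-fine discrete homotopies whose fineness tends to zero uniformly, and this requires the equivariant interpolation of \cite{wang2022min} together with careful bookkeeping of deformation supports across adjacent cells. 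The equivariance itself adds no new combinatorial difficulty, since every $\an(p,s,t)$ is a $G$-set by construction and the deformation chains in $\mathfrak{a}_n^G$ consist of $G$-cycles by Definition~\ref{Def:a.m.deform}.
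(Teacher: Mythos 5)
Your high-level strategy matches the paper's: follow Pitts' Theorem 4.10 and adapt the combinatorial construction to the equivariant setting, using the discrete interpolation machinery of \cite{wang2022min}. The paper's own proof is similarly terse — it essentially enumerates the Parts 1--9 of Pitts' proof and indicates the equivariant replacements (the $s(V)$-slack in Part~1; the equivalence of norms in annuli from \cite[Theorem~3.14]{wang2023min} in Part~2; the $G$-invariant isoperimetric Lemma~\ref{Lem: isoperimetric} in Parts~3--8; the $G$-invariant distance in Part~9). Your identification of the $\F\to\M$ upgrade as the central technical obstacle, and your observation that the $G$-annuli are $G$-sets so the equivariance is combinatorially invisible, are both correct and are the same points the paper makes.

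There is, however, a genuine gap in your combinatorial heuristic. You propose to color bad vertices with annulus indices so that adjacent bad vertices receive disjoint $G$-annuli, and then to ``apply the chosen deformation at each bad vertex independently, commute, and superimpose.'' This does not give fineness $o(1)$: if $x$ and $y$ are adjacent vertices deformed independently in disjoint annuli $A_x$, $A_y$, the resulting cycles $\varphi_i^*(x)$, $\varphi_i^*(y)$ differ from the originals by chains of mass roughly $\epsilon_j$ supported in $A_x$ and $A_y$ respectively, so $\M(\varphi_i^*(x)-\varphi_i^*(y))$ can be of order $\epsilon_j$, which is a fixed positive constant, not $o(1)$. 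Pitts' argument does not parallelize independent deformations over a vertex coloring. Instead, all vertices within a block of nearby cells are deformed \emph{coherently} in the \emph{same} annulus (so adjacent vertices change by nearly the same chain, keeping the fineness small), and the huge constant $(3^m)^{3^m}$ is not a chromatic number but arises from an \emph{iteration} over cell dimensions: the construction is applied successively to cells of dimensions $0,1,\dots,m$, and the disjointness of the concentric annuli is used so that the deformation attached to a higher-dimensional cell does not disturb the already-deformed lower-dimensional skeleton. Without this hierarchical structure the greedy coloring, while it exists, does not produce a map with the required fineness. Your sketch also bypasses the $s(V)$-slack enlargement of the annuli and the role of the equivariant isoperimetric lemma, both of which the paper flags as necessary ingredients; these are what make the interpolation steps close up in $\M$.

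One further, smaller divergence: for the $\F\to\M$ upgrade you invoke the equivariant discretization/extension theorems of \cite{wang2022min}, which convert between continuous $\F$-maps and $\M$-fine discrete maps. The paper instead uses \cite[Theorem~3.14]{wang2023min}, which says that in $G$-annuli (which contain no isolated orbits) the notions of $(G,\mZ_2)$-almost minimizing with respect to $\F$, $\mF$, and $\M$ coincide; one may therefore take the mass-reducing deformation chains to be $\M$-fine from the outset. Your route is plausible but more circuitous (you would need to interpolate the discrete deformation chain into a continuous $\F$-map and then re-discretize), whereas the paper's route avoids the detour entirely.
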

\begin{proof}
	The proof follows generally the arguments in \cite[Theorem 4.10]{pitts2014existence}. 
	
	Let $c=(3^m)^{3^m}$. 
	Given any $V\in \mathbf{C}(S)$, let $\mathcal{A}_V$ be the $c$-admissible family of $G$-annuli so that $V$ is not $(G,\mathbb{Z}_2)$-almost minimizing in every $\an\in\mathcal{A}_V$. 
	We can write $\mathcal{A}_V = \big\{ a_i(V):=\an(p, r_i-s_i, r_i+s_i)  \big\}_{i=1}^c$, where $\{r_i\}_{i=1}^c$ and $\{s_i\}_{i=1}^c$ are positive numbers with $0<r_1-s_1$, $2(r_i + s_i) < r_{i+1} - s_{i+1}$, $1\leq i\leq c-1$. 
	Hence, there exists a positive number $s(V) > 0$ sufficiently small so that $\big\{ A_i(V):=\an(p, r_i-s_i-s(V), r_i+s_i+s(V)) \big\}_{i=1}^c$ is also a $c$-admissible family of $G$-annuli. 
	This is parallel to \cite[Theorem 4.10, Part 1]{pitts2014existence}. 
	
	By \cite[Theorem 3.14]{wang2023min}, the construction in \cite[Theorem 4.10, Part 2]{pitts2014existence} can be carried out similarly in our equivariant setting. 
	Additionally, the arguments in \cite[Theorem  4.10, Part 3-8]{pitts2014existence} would carry over with Lemma \ref{Lem: isoperimetric} in place of \cite[1.14]{almgren1962homotopy}. 
	We shall also use the constant $s(V)>0$ constructed as above in \cite[Theorem  4.10, Part 3]{pitts2014existence}. 
	In \cite[Theorem  4.10, Part 9]{pitts2014existence}, we can use the $G$-invariant distance function $\dist_M(G\cdot p,\cdot)$ in place of $u$. 
	Then the rest parts of \cite[Theorem  4.10]{pitts2014existence} are purely combinatorial which would carry over in the equivariant setting. 
\end{proof}

Suppose $\{\Phi_i\}\subset {\bm \Pi}$ and $S=\{\varphi_i\}_{i\in\N}$ are the tightened sequences of mappings given by Proposition \ref{Prop:pulltight} and (\ref{Eq: discrete pull-tight}). 
Then we can apply the arguments in \cite[Theorem 6]{wang2022min} to $S$ with Theorem \ref{Thm: exist amv} in place of \cite[Theorem 5]{wang2022min} to obtain the following corollary:
\begin{corollary}\label{Cor: exist stationary and a.m. varifolds}
	Let $X$ be an $m$-dimensional cubical complex and ${\bf \Pi}\in \big[ X, \Z_{n}^G(M;\mF;\mZ_2)\big]$ be a continuous $G$-homotopy class. 
	Then for any min-max sequence $\{\Phi_i\}_{i\in\N}$ in ${\bf \Pi}$, there is a $G$-varifold $V\in {\bf C}(\{\Phi_i\}_{i\in\N})$ so that
	\begin{itemize}
		\item[(i)] $V$ is stationary in $M$;
		\item[(ii)] for any $(3^m)^{3^m}$-admissible family of $G$-annuli $\mathcal{A}$, $V$ is $(G,\mathbb{Z}_2)$-almost minimizing in some $\an\in\mathcal{A}$, which implies $V$ is $(G,\mathbb{Z}_2)$-almost minimizing in annuli by Lemma \ref{Lem: admissible to all annuli}. 
	\end{itemize}
\end{corollary}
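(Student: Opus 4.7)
\medskip

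\noindent\textbf{Proof proposal.} The plan is to argue by contradiction, following closely the scheme of \cite[Theorem 6]{wang2022min} but with Theorem \ref{Thm: exist amv} in place of \cite[Theorem 5]{wang2022min}, and to exploit the discretization/interpolation machinery of the equivariant min-max theory in order to shuttle between continuous and discrete mappings.

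First, starting from the given min-max sequence $\{\Phi_i\}_{i\in\N}\subset \bm\Pi$, I would apply the pull-tightening procedure of Proposition \ref{Prop:pulltight} (replacing $\Phi_i$ by $H(1,\Phi_i(\cdot))$ using the continuous map $H$ in \eqref{Eq-pulltightmap}) to obtain a new min-max sequence, still denoted $\{\Phi_i\}$, such that ${\bf C}(\{\Phi_i\})\subset {\bf C}(\{\Phi_i^*\})$ and every $V\in {\bf C}(\{\Phi_i\})$ is stationary in $M$. This secures property (i) for every candidate critical varifold, so only property (ii) remains to be achieved.

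Next, I would apply the equivariant discretization theorem \cite[Theorem 2]{wang2022min} to each $\Phi_i$ and pass to a diagonal subsequence, yielding a sequence $S=\{\varphi_i\}_{i\in\N}$ of discrete maps $\varphi_i:X(k_i)_0\to \Z_n^G(M;\mZ_2)$ with $k_i\to\infty$, $\mf_\M(\varphi_i)\to 0$, satisfying
\[
{\bf L}(S)={\bf L}(\{\Phi_i\})={\bf L}(\bm\Pi),\qquad {\bf C}(S)={\bf C}(\{\Phi_i\}),
\]
and such that the Almgren $G$-extension of $\varphi_i$ is $G$-homotopic to $\Phi_i$; this is exactly the passage described in \eqref{Eq: discrete pull-tight}. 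Suppose for contradiction that no $V\in {\bf C}(\{\Phi_i\})={\bf C}(S)$ satisfies condition (ii): then for every $V\in {\bf C}(S)$ there exists a $(3^m)^{3^m}$-admissible family of $G$-annuli $\mathcal{A}_V$ such that $V$ fails to be $(G,\mZ_2)$-almost minimizing in every member of $\mathcal{A}_V$. The hypotheses of Theorem \ref{Thm: exist amv} are then met, so that theorem produces a new discrete sequence $S^*=\{\varphi_i^*\}$ with $\varphi_i^*:X(k_i+\ell_i)_0\to \Z_n^G(M;\mZ_2)$, homotopic to $\varphi_i$ in $\Z_n^G(M;\M;\mZ_2)$ with fineness tending to zero, and satisfying ${\bf L}(S^*)<{\bf L}(S)={\bf L}(\bm\Pi)$.

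Finally, I would convert $S^*$ back into continuous maps by applying the Almgren $G$-extension \cite[Theorem 3]{wang2022min} to each $\varphi_i^*$, producing continuous maps $\Phi_i^{**}:X\to \Z_n^G(M;\mF;\mZ_2)$ whose sup-masses on $X$ differ from $\max_{x\in X(k_i+\ell_i)_0}\M(\varphi_i^*(x))$ by an error controlled by $\mf_\M(\varphi_i^*)$, hence with
\[
\limsup_{i\to\infty}\sup_{x\in X}\M(\Phi_i^{**}(x))\le {\bf L}(S^*)<{\bf L}(\bm\Pi).
\]
Because $\varphi_i^*$ is homotopic to $\varphi_i$ (which is in turn the discretization of $\Phi_i\in\bm\Pi$), the interpolation/extension procedure guarantees $\Phi_i^{**}\in\bm\Pi$ for all large $i$, contradicting the definition of ${\bf L}(\bm\Pi)$ as an infimum of sup-masses over $\bm\Pi$. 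Hence some $V\in {\bf C}(\{\Phi_i\})$ must satisfy (ii), and combined with (i) from the pull-tight step, the corollary follows; the final statement of (ii) about being $(G,\mZ_2)$-almost minimizing in annuli is then just Lemma \ref{Lem: admissible to all annuli}.

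The main obstacle I expect is the bookkeeping needed in the last paragraph to ensure that the continuous lifts $\Phi_i^{**}$ of the improved discrete maps $\varphi_i^*$ lie in the same $G$-homotopy class $\bm\Pi$, and that the mass comparison between the discrete and continuous levels is sharp enough to contradict ${\bf L}(\bm\Pi)$. This is precisely where the equivariant discretization and interpolation theorems of \cite{wang2022min} are crucial, and one must check that passing through the Almgren $G$-extension does not destroy the homotopy or inflate the sup-mass beyond ${\bf L}(\bm\Pi)$.
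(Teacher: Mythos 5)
Your proposal is correct and follows essentially the same route as the paper: pull-tighten to secure stationarity and shrink the critical set, discretize, run the contradiction via Theorem~\ref{Thm: exist amv} to push the width of the discrete sequence strictly below $\mathbf{L}(\bm\Pi)$, then lift back to the continuous homotopy class with the Almgren $G$-extension to contradict the definition of $\mathbf{L}(\bm\Pi)$. The paper simply compresses this into the citation ``apply the arguments in \cite[Theorem 6]{wang2022min} to $S$ with Theorem~\ref{Thm: exist amv} in place of \cite[Theorem 5]{wang2022min}'', whereas you spelled out the steps explicitly; there is no substantive difference.
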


\subsection{Regularity results}

\begin{definition}\label{Def: good replacements property}
	Let $U\subset M$ be an open $G$-set and $V\in\V^G_n(M)$ be a $G$-varifold that is stationary in $U$. 
	Given a compact $G$-set $K\subset U$ and a $G$-varifold $V^*\in \V^G_n(M)$, we say $V^*$ is a {\em $G$-replacement} of $V$ in $K$, if 
	\begin{itemize}
		\item[(i)]  $V \llcorner (M\setminus K) = V^* \llcorner (M\setminus K)$;
		\item[(ii)] $\|V\|(M) = \|V^*\|(M)$;
		\item[(iii)] $V^*$ is stationary in $U$;
		\item[(iv)] $V^*\llcorner \interior (K)$ is integer rectifiable, and $\Sigma := {\rm spt}(\|V^*\|)\cap \interior(K) $ is a $G$-invariant smoothly embedded stable minimal hypersurface.
	\end{itemize}
	We say $V$ has {\em good $G$-replacement property} in $U$, if for any finite sequence of compact $G$-sets $\{K_i\subset U\}_{i=1}^q$, $V^{(i)}$ has a $G$-replacement $V^{(i+1)}$ in $K_{i+1}$, where $V^{(0)} = V$, $i=0,\dots, q-1 $. 
	Moreover, if for any $p\in M$, there exists $r_{g.r.}(G\cdot p) > 0$ so that $V$ has good $G$-replacement property in every $\an(p,s,t)$, $0<s<t<r_{g.r.}(G\cdot p)$, then we say $V$ has {\em good $G$-replacement property in annuli}. 
\end{definition}

\begin{remark}\label{Rem: good replacement property}
	Suppose $V$ is stationary in $M$ and has good $G$-replacement property in annuli. 
	Let $V^*$ be a $G$-replacement of $V$ in $\Clos(\an(p,s,t))$, where $0<s<t<r=r_{g.r.}(G\cdot p)$, $r_{g.r.}:M/G\to\R^+$ is given in Definition \ref{Def: good replacements property}. 
	Then it follows from Definition \ref{Def: good replacements property}(i)(iii) that $V^*$ is also stationary in $M$. 
	Meanwhile, by shrinking $r_{g.r.}$ for $G\cdot q\neq G\cdot p$, we see that $V^*$ also has good $G$-replacement property in annuli. 
\end{remark}


\begin{proposition}\label{Prop: a.m. implies good replacements} 
	If $3\leq {\rm codim}(G\cdot p) \leq 7$ for all $p\in M$, and $V\in\V^G_n(M)$ is $(G,\mathbb{Z}_2)$-almost minimizing in an open $G$-set $U\subset M$. 
	Then $V$ has good $G$-replacement property in $U$. 
\end{proposition}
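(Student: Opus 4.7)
The plan is to adapt Pitts' classical replacement construction to the $G$-equivariant setting, using the $G$-invariant isoperimetric lemma to keep everything $G$-invariant and the equivariant regularity theory for stable minimal $G$-hypersurfaces to obtain smoothness. The construction falls naturally into three stages: produce an approximating minimizer in an equivariant variational class, take a varifold limit, and verify the four clauses of Definition \ref{Def: good replacements property}.

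First I would fix a compact $G$-set $K\subset U$ and choose sequences $\epsilon_i\to 0$ and $\delta_i>0$ together with $T_i\in \mathfrak{a}^G_n(U;\epsilon_i,\delta_i;\F)$ satisfying $\mathbf{F}(V,|T_i|)<\epsilon_i$, furnished by the $(G,\mathbb{Z}_2)$-almost minimizing hypothesis. Let $\mathcal{C}_i$ denote the class of $G$-cycles reachable from $T_i$ by a finite chain of $G$-invariant $(\infty,\delta_i)$-deformations whose supports lie in $K$ and whose mass never exceeds $\mathbf{M}(T_i)+\delta_i$; by definition of $\mathfrak{a}^G_n$, every element of $\mathcal{C}_i$ has mass at least $\mathbf{M}(T_i)-\epsilon_i$. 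Pick $T_i^*\in\mathcal{C}_i$ of mass within $\epsilon_i$ of the infimum, and pass to an $\mathbf{F}$-limit $V^*$ of $|T_i^*|$ along a subsequence. Since the deformations are supported in $K$, $T_i^*$ agrees with $T_i$ on $M\setminus K$, which gives $V^*\llcorner(M\setminus K)=V\llcorner(M\setminus K)$ and, via the mass pinching $\mathbf{M}(T_i)-\epsilon_i\leq \mathbf{M}(T_i^*)\leq \mathbf{M}(T_i)+\delta_i$ together with $\mathbf{F}(V,|T_i|)\to 0$, the identity $\|V^*\|(M)=\|V\|(M)$. Stationarity of $V^*$ in $U$ follows from the standard quasi-minimality argument applied to $G$-invariant test vector fields, with Lemma \ref{Lem: G-stationary} upgrading this to full stationarity.

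The main obstacle will be the interior regularity clause (iv). The plan is to show that for every small $G$-tube $B_\rho^G(q)\subset \mathrm{int}(K)$, $V^*$ is locally mass-minimizing among $G$-invariant competitors with the same trace on $\partial B_\rho^G(q)$ and small flat distance to $V^*$. If such a competitor $T'$ existed, one would splice it into $T_i^*$ using Lemma \ref{Lem: isoperimetric} to realize the difference by an $(n{+}1)$-dimensional $G$-invariant isoperimetric choice, producing a new admissible element of $\mathcal{C}_i$ with strictly smaller mass and contradicting the quasi-minimality of $T_i^*$ for $i$ large. Consequently $V^*\llcorner\mathrm{int}(K)$ is locally $G$-minimizing, hence stable under $G$-invariant normal variations and, by the averaging trick \eqref{Eq: average vector field}, stable in the usual sense. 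The hypothesis $3\leq \mathrm{codim}(G\cdot p)\leq 7$ is exactly the range in which the Schoen-Simon/Wickramasekera regularity theorem, applied slice-wise transverse to principal orbits or equivalently in the orbit space, yields that $\mathrm{spt}(\|V^*\|)\cap \mathrm{int}(K)$ is a smooth embedded $G$-invariant stable minimal hypersurface with integer multiplicity; this is what forces both bounds on the codimension.

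Finally, to obtain the \emph{good} $G$-replacement property (iteration over a finite sequence $\{K_i\}_{i=1}^{q}$), I would show that the single replacement $V^*$ just constructed is again $(G,\mathbb{Z}_2)$-almost minimizing in $U$. Indeed, any hypothetical $(\epsilon,\delta)$-deformation of an approximating $G$-cycle for $V^*$ can be grafted, via Lemma \ref{Lem: isoperimetric}, onto $T_i$ outside $K$ to yield a $G$-invariant $(\epsilon,\delta')$-deformation of $T_i$ in $U$, contradicting $T_i\in\mathfrak{a}^G_n(U;\epsilon_i,\delta_i;\F)$ once $i$ is large. Applying the single-replacement construction successively then delivers the required sequence $V^{(0)}=V, V^{(1)},\dots,V^{(q)}$, completing the proof.
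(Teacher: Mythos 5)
Your overall structure matches what the cited source (\cite[Proposition 2, 3]{wang2022min}) does: fix a compact $G$-set $K$, use $(G,\mathbb{Z}_2)$-almost minimizing cycles $T_i$ to form a Pitts-style constrained class $\mathcal{C}_i$, take a near-minimizer $T_i^*$ in $\mathcal{C}_i$, pass to the varifold limit $V^*$, verify clauses (i)--(iii) and then iterate by showing the replacement inherits the almost-minimizing property. The paper itself only gives a one-line citation to the author's earlier paper plus a remark that the singular set is $G$-invariant, so your blind reconstruction of the underlying argument is appreciated.

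There is, however, a genuine gap in your step for clause (iv). You write that $V^*\llcorner\mathrm{int}(K)$ is ``stable under $G$-invariant normal variations and, by the averaging trick \eqref{Eq: average vector field}, stable in the usual sense.'' The vector-field averaging $X\mapsto X_G$ cannot upgrade $G$-stability to ordinary stability. Since the Jacobi operator $L_\Sigma$ commutes with the $G$-action, $\mathfrak{X}^\perp(\Sigma)$ splits $L^2$-orthogonally into the $G$-invariant subspace $\mathfrak{X}^{\perp,G}(\Sigma)$ and its complement, both $L_\Sigma$-invariant; hence $\delta^2\Sigma(X) = \delta^2\Sigma(X_G) + \delta^2\Sigma(X - X_G)$, and positivity of the first summand (which is what $G$-stability gives) says nothing about the second. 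Indeed the whole point of the equivariant index theory here is that ${\rm Index}_G(\Sigma)$ and ${\rm Index}(\Sigma)$ can genuinely differ; only with the extra hypothesis of a $G$-invariant unit normal does $G$-stability imply stability (this is \cite[Lemma 7]{wang2022min}, referenced in the introduction).

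The fix is to average at the level of the \emph{competitors} rather than the infinitesimal variations: since $T\in\Z_n^G(M;\mZ_2)$ are boundaries of $G$-invariant Caccioppoli sets, given any (not necessarily $G$-invariant) competitor $E'$ with the same trace outside $K$, set $u(x) = \int_G \mathbf{1}_{g E'}(x)\,d\mu(g)$; convexity of total variation and $G$-isometry give $\|Du\|\le \mathrm{Per}(E')$, while the coarea formula shows $\|Du\| = \int_0^1 \mathrm{Per}(\{u>s\})\,ds$, and each superlevel set $\{u>s\}$ is a $G$-invariant competitor. This shows that local $G$-minimization among $G$-competitors already implies local minimization among all competitors, hence stability in the full sense, after which Schoen--Simon/Wickramasekera applies. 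Relatedly, your description ``applied slice-wise transverse to principal orbits'' is imprecise: the regularity theorem is applied directly on $M$, and one then uses the fact that ${\rm sing}(\Sigma)$, being $G$-invariant, would have to contain an orbit of dimension at least $n-6$ (from ${\rm codim}(G\cdot p)\le 7$), which exceeds the allowed singular dimension $n-7$; so the singular set is empty.
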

\begin{proof}
	Note that the singular set of a $G$-hypersurface is also $G$-invariant. 
	Hence, using the assumption $3\leq {\rm codim}(G\cdot p) \leq 7,\forall p\in M$, the proposition follows from the good $G$-replacement results in \cite[Proposition 2, 3]{wang2022min} and the optimal regularity for stable minimal hypersurfaces \cite{schoen1981regularity}\cite{wickramasekera2014general}. 
\end{proof}

By Proposition \ref{Prop: a.m. implies good replacements}, the $G$-varifold $V$ in Corollary \ref{Cor: exist stationary and a.m. varifolds} has good $G$-replacement property in annuli. 
Additionally, one should notice that the $G$-varifold $V$ in Corollary \ref{Cor: exist stationary and a.m. varifolds} satisfies a slightly stronger property to some extent by the following lemma:
\begin{lemma}\label{Lem: replace in all annuli}
	Let $V\in\V^G_n(M)$ be a $G$-varifold and $c\geq 1$ be an integer. 
	If for any $c$-admissible family of $G$-annuli $\mathcal{A}$, $V$ has good $G$-replacement property in some $\an\in\mathcal{A}$. 
	Then $V$ has good $G$-replacement property in $G$-annuli. 
\end{lemma}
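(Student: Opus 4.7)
The proof will proceed by a direct contradiction argument, paralleling the strategy of Lemma \ref{Lem: admissible to all annuli} but keeping track of the stronger good $G$-replacement property instead of just the almost minimizing condition. Specifically, I will assume $V$ fails to have good $G$-replacement property in annuli and construct from this failure a $c$-admissible family of $G$-annuli in which $V$ has no good $G$-replacement anywhere, contradicting the hypothesis.

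Unwinding the definition, the negation says that there exists $p\in M$ such that for every $r>0$ one can find $0<s<t<r$ with the property that $V$ does \emph{not} have good $G$-replacement property in $\an(p,s,t)$. Fixing such a $p$, I will build the desired bad $c$-admissible family by descending induction on $i\in\{c,c-1,\dots,1\}$. Start by picking any $r_c>0$ (say $r_c<\inj(G\cdot p)$) and using the negation to produce $0<s_c<t_c<r_c$ with $V$ not having good $G$-replacement property in $\an(p,s_c,t_c)$. Having chosen $s_{i+1}$, set $r_i:=\tfrac{1}{2}s_{i+1}$ and use the negation again to produce $0<s_i<t_i<r_i$ with $V$ failing the good $G$-replacement property on $\an(p,s_i,t_i)$. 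By construction $t_i<\tfrac{1}{2}s_{i+1}<\tfrac{1}{2}t_{i+1}$, so the family $\mathcal{A}=\{\an(p,s_i,t_i)\}_{i=1}^c$ is $c$-admissible in the sense of Definition \ref{Def: admissible family}, yet $V$ has good $G$-replacement property in none of its members. This contradicts the assumed hypothesis, completing the proof.

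There is essentially no technical obstacle here: the core content of the lemma is the combinatorial observation that the $c$-admissible condition $t_i<\tfrac{1}{2}s_{i+1}$ scales geometrically, so any local failure of the good $G$-replacement property can be amplified into a failure on an admissible family just by iterating into smaller scales. The only thing to keep in mind is that one must order the induction correctly (building the outermost annulus first and nesting bad annuli strictly inside $\tfrac{1}{2}s_{i+1}$) so that the nested annuli remain pairwise disjoint and the $c$-admissibility condition holds; this is the same bookkeeping used in the contradiction argument for Lemma \ref{Lem: admissible to all annuli}, and nothing about the good $G$-replacement property changes the structure of the reduction.
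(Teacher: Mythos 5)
Your proof is correct and follows the same contradiction argument the paper indicates (the paper simply states that the lemma ``is parallel to Lemma \ref{Lem: admissible to all annuli} and follows from a contradiction argument'' without elaborating). Your descending induction, choosing $r_i = \tfrac{1}{2}s_{i+1}$ to guarantee $c$-admissibility, supplies exactly the bookkeeping that the paper leaves implicit.
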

\begin{proof}
	This is parallel to Lemma \ref{Lem: admissible to all annuli} and follows from a contradiction argument. 
\end{proof}

The following regularity results come from the arguments in \cite[Section 6]{wang2022min}. 
\begin{theorem}[Regularity Theorem]\label{Thm: regularity of varifolds with good replacements}
	Suppose $3\leq {\rm codim}(G\cdot p)\leq 7$ for all $p\in M$. 
	If $c\geq 1$ is an integer and $V \in \mathcal{V}^G_n(M)$ satisfies: 
	\begin{itemize}
		\item $V$ stationary in $M$,
		\item for any $c$-admissible family of $G$-annuli $\mathcal{A}$, $V$ has good $G$-replacement property in some $\an\in\mathcal{A}$. 
	\end{itemize}
	Then $V$ is an integral varifold induced by a closed, smooth, embedded, $G$-invariant minimal hypersurface. 
	In particular, the regularity result also holds if $V\in \V_n^G(M)$ is stationary in $M$ and has good $G$-replacement property in annuli. 
\end{theorem}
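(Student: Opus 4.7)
The plan is first to reduce to the second ``in particular'' assertion. By Lemma \ref{Lem: replace in all annuli}, the hypothesis that $V$ has good $G$-replacement property in some annulus from every $c$-admissible family implies that $V$ has good $G$-replacement property in all $G$-annuli centered at each orbit. Hence it suffices to prove the theorem under the stronger assumption that $V$ is stationary in $M$ and has good $G$-replacement property in annuli.

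For the main regularity I would follow Pitts' replacement-based scheme, as adapted to the equivariant setting in \cite{wang2022min} and \cite{wang2023min}. Fix $p \in M$ and a small annulus $\an(p,s,t)$ with $0<s<t<r_{g.r.}(G\cdot p)$. By the good $G$-replacement property there exists a $G$-replacement $V^{(1)}$ of $V$ on $\Clos(\an(p,s,t))$ whose support inside the annulus is a smooth stable embedded minimal $G$-hypersurface $\Sigma^{(1)}$. By Remark \ref{Rem: good replacement property}, $V^{(1)}$ is itself stationary in $M$ and again enjoys the good $G$-replacement property in annuli, so one can iterate: take a replacement $V^{(2)}$ of $V^{(1)}$ on a neighboring annulus that overlaps the first, yielding another smooth stable piece $\Sigma^{(2)}$. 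The mass-preservation and support-outside-$K$ clauses in Definition \ref{Def: good replacements property} propagate information across overlaps.

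The central analytic step is to show that on the overlap of the two annuli the smooth stable pieces $\Sigma^{(1)}$ and $\Sigma^{(2)}$ coincide, so that local replacements can be glued into a single smooth embedded minimal $G$-hypersurface whose varifold agrees with $V$. This rests on: (i) a tangent cone analysis at each orbit, showing that on principal strata tangent cones reduce to a finite union of hyperplanes via the stability-plus-replacement argument; (ii) a unique continuation / maximum principle argument forcing $\Sigma^{(1)}$ and $\Sigma^{(2)}$ to agree on their common domain once they are tangent at a single orbit; (iii) the Schoen--Simon \cite{schoen1981regularity} and Wickramasekera \cite{wickramasekera2014general} regularity for stable minimal hypersurfaces, which under the assumption $3 \leq {\rm codim}(G\cdot p) \leq 7$ removes all possible singularities --- the singular set of a $G$-hypersurface being itself $G$-invariant, hence of codimension at least $3$ inside it. A constancy argument then gives integer multiplicity on each $G$-component.

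The main obstacle will be the behavior at non-principal orbits, where injectivity radii of orbits may degenerate and the slice representation is nontrivial, so that the tangent cone and overlap analysis cannot be performed uniformly via a single Euclidean blow-up. Following Li--Zhou's idea and the author's formulation in \cite{wang2023min}, each orbit type stratum should be treated as a ``boundary'' and the codimension bound ensures that the optimal regularity of stable minimal hypersurfaces remains applicable across the lower-dimensional strata. Consequently, the regularity obtained on principal strata extends through the singular orbit strata, and $V$ is induced by a closed, smooth, embedded, $G$-invariant minimal hypersurface with integer multiplicity, as required.
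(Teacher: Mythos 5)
Your proposal follows essentially the same route as the paper's: reduce via Lemma~\ref{Lem: replace in all annuli} to the good-replacement-in-annuli case, run the Pitts-style iterated-replacement, tangent-cone, and unique-continuation/gluing scheme from \cite[Theorem 7]{wang2022min}, and invoke Schoen--Simon together with Wickramasekera's optimal regularity to remove any singularities, which are $G$-invariant and hence of codimension at least $7$ inside $\Sigma$ by the hypothesis $3\leq{\rm codim}(G\cdot p)\leq 7$. One small misplacement worth flagging: the Li--Zhou ``treat each orbit-type stratum as a boundary'' device from \cite{wang2023min} is employed \emph{upstream} of this theorem --- it underlies Proposition~\ref{Prop: a.m. implies good replacements}, i.e.\ the passage from $(G,\mathbb{Z}_2)$-almost minimizing to the good $G$-replacement property near non-principal orbits --- whereas the proof of the regularity theorem itself already takes good replacement in all small annuli as given, so that at non-principal orbits it relies only on the splitting property \cite[Lemma 11(iii)]{wang2022min} and the $G$-invariance of the singular set, not on the boundary reformulation.
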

\begin{proof}
	The proof is essentially the same as \cite[Theorem 7]{wang2022min} and we only point out a few modifications. 
	Firstly, note the good $G$-replacement property holds in every small $G$-annuli instead of only regular $G$-annuli (\cite[(12)]{wang2022min}). 
	Therefore, the assumption on $M\setminus M^{reg}$ in \cite[Theorem 7]{wang2022min} is now redundant. 
	Additionally, using Proposition \ref{Prop: a.m. implies good replacements} in place of \cite[Proposition 4]{wang2022min}, we see \cite[Lemma 10, 11]{wang2022min} are also valid at every $p\in\spt(\|V\|)$. 
	Then by the splitting property \cite[Lemma 11(iii)]{wang2022min}, one can verify that \cite[Proposition 5]{wang2022min} also holds provided $3\leq {\rm codim}(G\cdot p) \leq 7$ for all $p\in M$. 
	Moreover, we shall also use the assumption $n-6\leq\dim(G\cdot p) \leq n-2$ to remove the singularity for stable minimal $G$-hypersurfaces by \cite{schoen1981regularity}\cite[Corollary 2]{wickramasekera2014general}. 
	%
\end{proof}

Note if one allows a singular set of Hausdorff dimension no more than $n-7$, then Theorem \ref{Thm: regularity of varifolds with good replacements} also holds for all $n\geq 2$ with `hypersurface' replaced by `generalized hypersurface' in the sense of \cite[Definition 2.1]{liu2021existence}. 

At the end of this subsection, we show that the $G$-replacement coincides with the original varifold $V$ if $V$ is smooth embedded. 
\begin{proposition}\label{Prop: replacement equals to origin}
	Suppose $3\leq {\rm codim}(G\cdot p)\leq 7$ for all $p\in M$. 
	Take any $p\in M$ and $0<r<\frac{\inj(G\cdot p)}{2}$ so that $B^G_{t}(p)$ has mean convex boundary for $0<t<2r$. 
	Suppose $V\in \V_n^G(M)$ has good $G$-replacement property in $B^G_{2r}(p)$, and $\spt(\|V\|)\cap B^G_{2r}(p)$ is a smooth embedded minimal hypersurface without boundary in $B^G_{2r}(p)$. 
	Let $V^*$ be a $G$-replacement of $V$ in $\Clos(B^G_s(p))$ for some $s\in (\frac{r}{2}, r )$. 
	If $\spt(\|V\|)$ is transversal to $\partial B^G_s(p)$ or $\spt(\|V\|)\cap \Clos(B^G_{s}(p)) = \emptyset$, then $V=V^*$ and $V$ is stable in $B^G_s(p)$. 
\end{proposition}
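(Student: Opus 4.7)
The plan is to exploit the fact that $V$ and $V^*$ coincide outside $K:=\Clos(B^G_s(p))$ and have equal total mass, then combine the regularity of both varifolds inside $K$ with the mean convexity of the geodesic tubes $\partial B^G_t(p)$ to force pointwise equality.

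By properties (i) and (ii) of Definition \ref{Def: good replacements property}, $V\llcorner (M\setminus K)=V^*\llcorner (M\setminus K)$ and $\|V\|(M)=\|V^*\|(M)$, which gives $\|V\|(K)=\|V^*\|(K)$. In the first case $\spt(\|V\|)\cap K=\emptyset$, so $\|V\|(K)=0$; hence $\|V^*\|(K)=0$, forcing $V^*\llcorner K=0=V\llcorner K$, and therefore $V^*=V$ on $M$. Stability in $B^G_s(p)$ is then trivial.

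In the transversal case, outside $K$ both varifolds are given by the smooth minimal $G$-hypersurface $\Sigma:=\spt(\|V\|)\cap B^G_{2r}(p)$, and by (iv) inside $\interior(K)$ the varifold $V^*$ is supported on a smooth embedded stable minimal $G$-hypersurface $\Sigma^*$. At each point $q\in \Sigma\cap \partial K$, the stationarity of $V^*$ in $B^G_{2r}(p)$ together with the transversal matching of $V^*$ to $V$ just outside $\partial K$ forces $\Sigma^*$ to extend $\Sigma$ smoothly through $q$; by unique continuation for the minimal surface equation, the connected component of $\Sigma^*$ passing through any such $q$ coincides with the corresponding component of $\Sigma\cap \interior(K)$. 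Any remaining component of $\Sigma^*$, or of $\Sigma\cap \interior(K)$, that is disjoint from $\partial K$ is a closed embedded minimal $G$-hypersurface contained entirely in $B^G_s(p)$; applying the maximum principle to the foliation of $B^G_{2r}(p)$ by the mean convex boundaries $\partial B^G_t(p)$ (take the smallest $t^\ast$ with the component inside $\Clos(B^G_{t^\ast}(p))$ and consider an interior tangency) rules out such components. Hence $\Sigma^*=\Sigma\cap \interior(K)$ as sets, and continuity of multiplicities across $\partial K$ (stationarity of $V^*$ plus smoothness on both sides) forces the integer multiplicity of $V^*$ on each component to equal that of $V$, giving $V^*=V$ throughout $M$. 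Stability of $V$ in $B^G_s(p)$ is then inherited from the stability of $\Sigma^*$ built into (iv) of Definition \ref{Def: good replacements property}.

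The main obstacle will be the step at $\partial K$: justifying that $\Sigma^*$ truly extends $\Sigma$ smoothly across the transversal boundary (so that unique continuation applies and no new sheets appear from inside), and applying the mean-convex maximum principle to rule out closed components entirely inside $B^G_s(p)$. The hypothesis that each $\partial B^G_t(p)$, $0<t<2r$, is mean convex is the precise tool required, and the transversality assumption is what prevents $\Sigma$ and $\partial K$ from touching tangentially, where the matching argument would otherwise fail.
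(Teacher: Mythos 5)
Your overall strategy matches the paper's: exploit the coincidence outside $K$, establish that $\Sigma^*$ joins $\Sigma$ smoothly across the transversal boundary $\partial B^G_s(p)$, propagate by unique continuation, and then kill any extraneous components. But the pivotal step you defer to ``the main obstacle'' is exactly where the proof lives, and you leave it unproved. Asserting that ``stationarity of $V^*$ together with transversal matching forces $\Sigma^*$ to extend $\Sigma$ smoothly through $q$'' is not a consequence of the first variation alone: stationarity rules out free boundary terms, but it does not by itself exclude $\Sigma^*$ from having several sheets accumulating near $\partial K$ from the inside, nor does it upgrade a Lipschitz/$C^1$ match of the normals to $C^{1,1}$ and then to smooth. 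The paper resolves precisely this by invoking the Schoen--Simon curvature estimate for stable minimal hypersurfaces and the smooth gluing argument of \cite[Theorem 7]{wang2022min} (applied to the pair $V,V^*$), which yields $V\llcorner\an(p,\tfrac{r}{2},s)=V^*\llcorner\an(p,\tfrac{r}{2},s)$ as an equality of integer-multiplicity varifolds. That equality is what both fixes the multiplicities and supplies the open region on which unique continuation can be launched. Without it, your application of unique continuation has no foothold: you have agreement of supports only on the outside of $\partial K$, and no reason why the inside component through $q$ should be the one picked by $\Sigma^*$.

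Your endgame is a legitimate and slightly more geometric alternative to the paper's. The paper rules out phantom mass in $V^*$ inside $B^G_s(p)$ via the bookkeeping $\|V\|(\partial B^G_s(p))=\|V^*\|(\partial B^G_s(p))=0$ together with $\|V\|(M)=\|V^*\|(M)$, whereas you invoke the mean-convex foliation $\{\partial B^G_t(p)\}_{0<t<2r}$ and the interior tangency maximum principle to exclude closed minimal components of $\Sigma^*$ trapped in $B^G_s(p)$. Both arguments work; the paper's mass argument is a bit cleaner because it does not need to isolate each such component. So the genuine gap in your proposal is the smooth gluing across $\partial K$; once you import the Schoen--Simon gluing as in \cite[Theorem 7]{wang2022min}, the rest of your argument goes through.
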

\begin{proof}
	By the maximum principle and the fact that $V=V^*$ in $M\setminus\Clos(B^G_s(p))$, we can assume without loss of generality that $\spt(\|V\|)\cap B^G_{2r}(p)$ has connected components $\{\Sigma_k\}_{k=1}^K$ so that $\partial \Sigma_k \subset \partial B^G_{2r}(p)$, $\Sigma_k\cap B^G_{s}(p)\neq\emptyset$, and $\Sigma_k$ intersects with $\partial B^G_{s}(p) $ transversally, $k=1,\dots,K$. 
	By the smooth gluing results (c.f. [28, (7.18)-(7.38)]) used in \cite[Theorem 7]{wang2022min} with $V,V^*$, in place of $V^*,V^{**}$, we have $V\llcorner \an(p,\frac{r}{2}, s) = V^*\an(p,\frac{r}{2}, s )$. 
	Now, it follows from the classical unique continuation result \cite[Theorem 5.3]{de2013existence} that $\spt(\|V\|)\cap B^G_s(p) \subset \spt(\|V^*\|)\cap B^G_s(p)$. 
	Since the transversality implies $\|V\|(\partial B^G_s(p)) = 0$, one easily verifies that $ \|V^*\|(\partial B^G_s(p)) = 0$ and $\|V^*\|(B^G_s(p)) = \|V\|(B^G_s(p))$. Thus, $V=V^*$ indicating the stability in $B^G_s(p)$. 
\end{proof}

\subsection{Equivariant min-max theorem}
At the end of this section, we state our equivariant min-max theorem as an improved version of \cite[Theorem 8]{wang2022min}. 
For simplicity, let us first introduce the following concept. 
\begin{definition}\label{Def: amv in admissible annuli}
	Given $c\in\mZ_+$, $V\in\V_n^G(M)$ is said to satisfy the {\em $(\ast_c^G)$-property}, if for any $c$-admissible family of $G$-annuli $\mathcal{A}$, $V$ has good $G$-replacement property in some $\an\in\mathcal{A}$. 
\end{definition}

\begin{theorem}\label{Thm: min-max}
	Let $(M^{n+1}, g_{_M})$ be a closed Riemannian manifold with a compact Lie group $G$ acting as isometries. 
	Suppose $3\leq {\rm codim}(G\cdot p)\leq 7$ for all $p\in M$. 
	For any $G$-homotopy class ${\bm \Pi}\in\big[X^m,\Z_n^G(M;\mF;\mZ_2)\big]$, let $c=(3^m)^{3^m}$, $\{\Phi_i\}_{i\in\N}$ be a min-max sequence of ${\bm \Pi}$. 
	Then there exists a stationary $G$-varifold $V\in\mathbf{C}(\{\Phi_i \}_{i\in\N})$ with $(\ast_c^G)$-property, 
	and thus $V$ is an integral $G$-varifold induced by a closed smooth embedded $G$-invariant minimal hypersurface. 
\end{theorem}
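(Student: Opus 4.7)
The plan is to assemble the ingredients already developed earlier in the excerpt into a single variational-existence-plus-regularity argument, following the standard Almgren--Pitts strategy adapted to the equivariant setting.

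First, I would start from the given min-max sequence $\{\Phi_i\}_{i\in\N}\subset \bm{\Pi}$ and apply Proposition~\ref{Prop:pulltight} to replace it by a pulled-tight min-max sequence (still denoted $\{\Phi_i\}$) whose critical set consists entirely of $G$-varifolds stationary in $M$; this uses the equivariant tightening map $H$ from \eqref{Eq-pulltightmap} and costs us nothing since ${\bf C}(\{\Phi_i\})\subset{\bf C}(\{\Phi_i^*\})$.

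Next, I would invoke the equivariant discretization result recalled in \eqref{Eq: discrete pull-tight}: after passing to a diagonal subsequence there exist $k_i\to\infty$ and discrete maps $\varphi_i:X(k_i)_0\to \Z_n^G(M;\mZ_2)$ of vanishing fineness whose Almgren extensions are $G$-homotopic to the $\Phi_i$, and which satisfy ${\bf L}(S)={\bf L}(\{\Phi_i\})={\bf L}(\bm{\Pi})$ and ${\bf C}(S)={\bf C}(\{\Phi_i\})$. Applying Corollary~\ref{Cor: exist stationary and a.m. varifolds} to this discrete sequence (whose combinatorial heart is Theorem~\ref{Thm: exist amv}) produces a $G$-varifold $V\in{\bf C}(\{\Phi_i\})$ that is stationary in $M$ and that, for every $c$-admissible family of $G$-annuli $\mathcal{A}$ with $c=(3^m)^{3^m}$, is $(G,\mZ_2)$-almost minimizing in at least one annulus $\an\in\mathcal{A}$.

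Now I would upgrade almost minimizing to good $G$-replacement: for each such $\an\in\mathcal{A}$, Proposition~\ref{Prop: a.m. implies good replacements} (which uses the dimension hypothesis $3\leq{\rm codim}(G\cdot p)\leq 7$ and the optimal regularity of stable minimal hypersurfaces) gives that $V$ has the good $G$-replacement property in $\an$. Hence $V$ enjoys the $(\ast_c^G)$-property of Definition~\ref{Def: amv in admissible annuli}. Finally, the Regularity Theorem~\ref{Thm: regularity of varifolds with good replacements}, applied with this exact hypothesis, concludes that $V$ is induced by a closed smooth embedded $G$-invariant minimal hypersurface.

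The genuinely hard work has already been done in the cited statements: the combinatorial Pitts argument in Theorem~\ref{Thm: exist amv}, the tangent-cone/replacement analysis underlying Proposition~\ref{Prop: a.m. implies good replacements}, and the blow-up/unique-continuation gluing behind Theorem~\ref{Thm: regularity of varifolds with good replacements}. So the only thing to watch for in this proof is bookkeeping; specifically, verifying that each reduction preserves membership in ${\bf C}(\{\Phi_i\})$ and that the constant $c=(3^m)^{3^m}$ supplied by Theorem~\ref{Thm: exist amv} matches the one demanded by the $(\ast_c^G)$-property. No additional estimates beyond those packaged in the earlier results are needed.
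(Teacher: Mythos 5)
Your proposal is correct and follows the same route as the paper: the paper's proof is a three-line argument that cites Corollary~\ref{Cor: exist stationary and a.m. varifolds} directly, then applies Proposition~\ref{Prop: a.m. implies good replacements} and Theorem~\ref{Thm: regularity of varifolds with good replacements}, exactly as you do. Your version merely unpacks what is already internal to Corollary~\ref{Cor: exist stationary and a.m. varifolds} (the tightening via Proposition~\ref{Prop:pulltight} and the discretization of (\ref{Eq: discrete pull-tight})); the one small bookkeeping slip is that the Corollary is stated for the continuous tightened sequence, not the discrete one, but since that discretization happens inside the Corollary's proof this is immaterial.
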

\begin{proof}
	By Corollary \ref{Cor: exist stationary and a.m. varifolds}, there is a stationary $G$-varifold $V\in\mathbf{C}(\{\Phi_i \}_{i\in\N})$ so that for any $(3^m)^{3^m}$-admissible family of $G$-annuli $\mathcal{A}$, $V$ is $(G,\mathbb{Z}_2)$-almost minimizing in some $\an\in\mathcal{A}$, which implies $V$ has good $G$-replacement property in this $\an$ by Proposition \ref{Prop: a.m. implies good replacements}. 
	Therefore, this $G$-varifold $V$ satisfies the $(\ast_c^G)$-property, and the regularity of $V$ then follows directly from Theorem \ref{Thm: regularity of varifolds with good replacements}. 
\end{proof}


\section{Min-max $G$-hypersurfaces and Compactness Theorem}\label{Sec: compactness}

The main purpose of this section is to show a compactness theorem together with a generic finite result for min-max $G$-hypersurfaces. 
To begin with, we make the following definitions. 

\begin{definition}\label{Def: min-max G-hypersurfaces}
	For any $c\in\mZ_+$, if $\Sigma \in \V_n^G(M)$ is a stationary $G$-varifold satisfying the $(\ast_c^G)$-property (Definition \ref{Def: amv in admissible annuli}), then we say $\Sigma$ is a {\em min-max $(c,G)$-hypersurface (with multiplicity)}. 
	Moreover, if $\Sigma $ is constructed by Theorem \ref{Thm: min-max} from a $G$-homotopy class ${\bm \Pi}$, then we say $\Sigma$ is a min-max $(c,G)$-hypersurface {\em corresponding to ${\bm \Pi}$}. 
	
	In general, we say $\Sigma$ is a {\em min-max $G$-hypersurface (with multiplicity)} if it is a min-max $(c,G)$-hypersurface for some $c\in\mZ_+$. 
\end{definition}

\begin{definition}\label{Def: G-connected}
	A $G$-hypersurface $\Sigma$ is said to be {\em $G$-connected} if for any two connected components $\Sigma_1,\Sigma_2$ of $\Sigma$, there exists $g\in G$ so that $g\cdot \Sigma_1 = \Sigma_2$. 
\end{definition}

By definitions, if $c_1>c_2\geq 1$ and $\Sigma$ is a min-max $(c_2,G)$-hypersurface, then $\Sigma$ is also a min-max $(c_1,G)$-hypersurface. 
Additionally, by the regularity theorem \ref{Thm: regularity of varifolds with good replacements}, it is reasonable to call the stationary $G$-varifold satisfying the $(\ast_c^G)$-property a $G$-hypersurface with multiplicity. 
Hence, we can define the $G$-index of a min-max $(c,G)$-hypersurface $\Sigma$ as ${\rm Index}_G(\Sigma) : = {\rm Index}_G(\spt(\|\Sigma\|))$. 
Moreover, we have the following compactness theorem for min-max $(c,G)$-hypersurfaces.

\begin{theorem}[Compactness Theorem]\label{Thm: compactness theorem}
	Let $(M^{n+1},g_{_M})$ be a closed Riemannian manifold and $G$ be a compact Lie group acting by isometries on $M$ so that $3\leq {\rm codim}(G\cdot p) \leq 7$ for all $p\in M$. 
	Suppose $c\in\mZ_+$, $\{\Sigma_i \}_{i\in\N}$ is a sequence of min-max $(c,G)$-hypersurfaces (with multiplicities) with $\sup_{i\in\N} \|\Sigma_i\|(M) \leq C <+\infty$. 
	Then $\Sigma_i $ converges (up to a subsequence) in the varifold sense to an integral stationary $G$-varifold $\Sigma_\infty\in\V_n^G(M)$ so that $\Sigma_\infty$ is also a min-max $(c,G)$-hypersurface (with multiplicity). 
	Moreover, there exists a finite union of orbits $\mathcal{Y}=\cup_{k=1}^K G\cdot p_k$ such that the convergence (up to a subsequence) is locally smooth and graphical on $\spt(\|\Sigma_\infty\|)\setminus \mathcal{Y} $ with a constant integer multiplicity on each $G$-connected component of $\spt(\|\Sigma_\infty\|)$. 
\end{theorem}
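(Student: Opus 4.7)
The plan is to extract a varifold limit, transfer the $(\ast_c^G)$-property from the sequence to the limit via a pigeonhole-plus-diagonal argument on successive $G$-replacements, invoke Theorem~\ref{Thm: regularity of varifolds with good replacements} for regularity of $\Sigma_\infty$, and finally upgrade varifold convergence to smooth graphical convergence away from a finite orbit set using Schoen-Simon curvature estimates on the stable replacements together with Proposition~\ref{Prop: replacement equals to origin}. I expect the transfer of $(\ast_c^G)$ to the limit to be the main technical obstacle.

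First I extract a subsequence: by the uniform mass bound $\|\Sigma_i\|(M)\leq C$ and weak-$\ast$ compactness of Radon measures, a subsequence (unrelabelled) converges as varifolds to some $\Sigma_\infty\in\V_n^G(M)$; closedness of $\V_n^G(M)$ under weak convergence was observed in Section~\ref{Sec: preliminary}, and stationarity passes to the limit under smooth $G$-variations. Next, fix any $c$-admissible family $\mathcal{A}=\{\an(p,s_j,t_j)\}_{j=1}^c$. For each $i$, some $j(i)\in\{1,\dots,c\}$ satisfies that $\Sigma_i$ has good $G$-replacement property in $\an(p,s_{j(i)},t_{j(i)})$; by pigeonhole restrict to a further subsequence with common value $j$, and set $U=\an(p,s_j,t_j)$. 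Given a finite list $\{K_\ell\}_{\ell=1}^q$ of compact $G$-subsets of $U$, iteratively produce $G$-replacements $\Sigma_i^{(\ell)}$ of $\Sigma_i^{(\ell-1)}$ in $K_\ell$ (with $\Sigma_i^{(0)}=\Sigma_i$), each being a stable minimal $G$-hypersurface inside $\interior(K_\ell)$ of the same total mass. The Schoen-Simon curvature estimates, applicable since $3\leq\mathrm{codim}(G\cdot p)\leq 7$, give local precompactness of each replacement family; a diagonal extraction through $\ell=1,\dots,q$ produces $G$-varifolds $\Sigma_\infty^{(\ell)}$ stationary in $U$, agreeing with $\Sigma_\infty^{(\ell-1)}$ outside $K_\ell$, with the same total mass, and with interior induced by a stable minimal $G$-hypersurface via Wickramasekera regularity. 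This exhibits the good $G$-replacement property of $\Sigma_\infty$ in $U$, hence the $(\ast_c^G)$-property.

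Steps one and two combined with Theorem~\ref{Thm: regularity of varifolds with good replacements} identify $\Sigma_\infty$ as an integral $G$-varifold induced by a closed smooth embedded minimal $G$-hypersurface, carrying an integer multiplicity on each component. To upgrade varifold convergence to smooth graphical convergence, fix $p\in\spt(\|\Sigma_\infty\|)$ and $r\in(0,\inj(G\cdot p)/2)$ such that $B^G_{2r}(p)$ has mean convex boundary spheres. For a full-measure set of $s\in(r/2,r)$, $\Sigma_\infty$ is transverse to $\partial B^G_s(p)$, and for $i$ large so is $\Sigma_i$; the $G$-replacements $\Sigma_i^*$ of $\Sigma_i$ in $\Clos(B^G_s(p))$ are stable minimal $G$-hypersurfaces with uniformly bounded area, so Schoen-Simon curvature estimates hold uniformly. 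Proposition~\ref{Prop: replacement equals to origin} identifies $\Sigma_i^*=\Sigma_i$ in $B^G_s(p)$ under the transversality, giving uniform curvature bounds on $\Sigma_i$ in smaller balls and the desired smooth graphical convergence there. The bad set $\mathcal{Y}$ collects orbits where local mass concentration forces curvature blow-up; a standard covering/counting argument, using the global bound $\|\Sigma_i\|(M)\leq C$ and a uniform lower density bound for smooth minimal $G$-hypersurfaces, forces $\mathcal{Y}$ to be a finite union of orbits $\bigcup_{k=1}^K G\cdot p_k$. Constancy of the integer multiplicity on each $G$-connected component of $\spt(\|\Sigma_\infty\|)$ follows from local constancy on the smooth-convergence set together with $G$-invariance of the multiplicity function. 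The principal difficulty is the diagonal extraction in Step two: one must simultaneously pass to subsequential limits of the replacement chains $\{\Sigma_i^{(\ell)}\}_i$ while controlling stationarity, total mass, and the stable-hypersurface identity of the interior, using the codimension assumption to rule out singularity concentration on non-principal orbit strata.
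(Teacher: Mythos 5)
Your first half — extracting the varifold limit, pigeonholing over the finite admissible family to find a common annulus $A$ where all $\Sigma_i$ have the good $G$-replacement property, and then passing each replacement chain to the limit via Schoen--Simon compactness — matches the paper's argument and is sound, as is the invocation of Theorem~\ref{Thm: regularity of varifolds with good replacements} for the regularity of $\Sigma_\infty$. The use of transversality plus Proposition~\ref{Prop: replacement equals to origin} to convert GGRP on small balls into curvature bounds on $\Sigma_i$ is also the right mechanism and appears in the paper's definition of the bad sets $\mathcal{Y}$ and $\mathcal{Z}$.

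However, the step where you dispose of the finiteness of $\mathcal{Y}$ via ``a standard covering/counting argument, using the global bound $\|\Sigma_i\|(M)\le C$ and a uniform lower density bound'' is a genuine gap, and it misidentifies the mechanism. In Sharp's setting the bad set is bounded by an explicit Morse index cap (each bad point costs at least one unstable direction), not by mass concentration; here there is no index hypothesis at all, and a bad orbit is merely one where GGRP fails at arbitrarily small scales for an unbounded tail of the sequence. Nothing forces each such orbit to carry an extra quantum $\epsilon_0$ of area, so the density-counting scheme does not close. The paper itself flags the exact pitfall (in the remark following Claim~\ref{Claim: uniform subsequence}): the radius and the subsequence on which GGRP can be secured near $p_2$ may depend on the subsequence already chosen near $p_1$, so naive nesting could fail to terminate. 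The actual argument requires two nontrivial lemmas tied to the combinatorics of the $(\ast_c^G)$-property: Claim~\ref{Claim: uniform subsequence}, which by iteratively ``sliding'' the inner radii of a $c$-admissible family produces, for each orbit, a single subsequence and a fixed outer radius $r$ so that GGRP holds in \emph{all} annuli $\an(p,s,r)$ with $s<r$; and Claim~\ref{Claim: finite steps}, a diagonal/accumulation argument showing the inductive extraction of bad orbits terminates, precisely because accumulating bad orbits $G\cdot p_k\to G\cdot p$ would eventually land inside an annulus where Claim~\ref{Claim: uniform subsequence} guarantees GGRP, a contradiction. Your proposal names the transfer of $(\ast_c^G)$ to the limit as the ``principal difficulty,'' but that part is the easier one; the real work is in replacing your covering argument with something like these two claims.
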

Note the assumption $3\leq {\rm codim}(G\cdot p) \leq 7, \forall p\in M$, ensures the regularity of stable $G$-hypersurfaces by \cite{schoen1981regularity}\cite{wickramasekera2014general}. 

\begin{proof}
	By the compactness theorem of varifolds, we have $\Sigma_i$ converges (up to a subsequence) to a stationary $G$-varifold $\Sigma_\infty$. 
	Then for any $c$-admissible family of $G$-annuli $\mathcal{A}$, each $\Sigma_{i}$ has good $G$-replacement property in some $A_{(i)}\in\mathcal{A}$. 
	Since $\mathcal{A}$ contains only a finite number of $G$-annuli, there exist $A\in\mathcal{A}$ and a (not relabeled) subsequence $\{\Sigma_{i}\}_{i\in\N}$ so that every $\Sigma_{i}$ has good $G$-replacement property in $A$. 
	We are now going to show that $\Sigma_\infty$ has good $G$-replacement property in such $A\in\mathcal{A}$, which implies $\Sigma_\infty$ satisfies the $(\ast_c^G)$-property and is a min-max $(c,G)$-hypersurface. 
	
	Indeed, given any compact $G$-set $K\subset A$, let $\Sigma_{i}^{*}$ be a $G$-replacement of $\Sigma_{i}$ in $K$. 
	Hence, by Definition \ref{Def: good replacements property}(i)(ii)(iii), we have $\Sigma_{i}^*$ converges (up to a subsequence) to a stationary $G$-varifold $\Sigma_\infty^*$. 
	Additionally, by Definition \ref{Def: good replacements property}(iv) and the compactness theorem for stable minimal hypersurfaces (\cite[Corollary 1]{schoen1981regularity}), we also have that $\Sigma_\infty^*$ is a $G$-replacement of $\Sigma_\infty$ in $K$. 
	By repeating this procedure, we see that $\Sigma_\infty$ has good $G$-replacement property in the $G$-annulus $A$. 
	
	To show the convergence is locally smooth and graphical, we first show the above subsequence $\{\Sigma_i\}_{i\in\N}$ can be taken uniformly around an orbit:
	\begin{claim}\label{Claim: uniform subsequence}
		For any $p\in \spt(\|\Sigma_\infty\|)$, there exists a positive real number $r(G\cdot p)>0$ and a (not relabeled) subsequence $\{\Sigma_{i}\}_{i\in\N}$ so that if $0<s<t\leq r(G\cdot p)$, then for $i$ sufficiently large, each $\Sigma_i$ has good $G$-replacement property in $\an(p,s,t)$. 
	\end{claim}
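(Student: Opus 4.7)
The plan is to exploit the $(\ast_c^G)$-property of each $\Sigma_i$ (Definition~\ref{Def: amv in admissible annuli}) together with the pigeonhole principle: a single $c$-admissible family applied across the whole sequence will produce a common annulus of good $G$-replacement for a subsequence, and iterating at finer scales via diagonalization then extends this to all sub-annuli of a small tubular neighborhood of $G\cdot p$.

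First, I would fix $p \in \spt(\|\Sigma_\infty\|)$ and choose $R_0 \in (0, \tfrac{1}{2}\inj(G\cdot p))$ so that $B_{2R_0}^G(p)$ lies in a $G$-equivariant tubular neighborhood of the orbit. Then pick a $c$-admissible family $\mathcal{A} = \{\an(p, s_k, t_k)\}_{k=1}^c$ inside $B_{R_0}^G(p)$. Since each $\Sigma_i$ is a min-max $(c,G)$-hypersurface, the $(\ast_c^G)$-property provides, for each $i$, an index $k(i) \in \{1,\dots,c\}$ so that $\Sigma_i$ has good $G$-replacement property in $\an(p, s_{k(i)}, t_{k(i)})$. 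Applying the pigeonhole principle then furnishes an index $k_0$ and an infinite subset $\mathcal{I}_0 \subset \mathbb{N}$ along which all $\Sigma_i$ share good $G$-replacement in $A_0 := \an(p, s_{k_0}, t_{k_0})$. Setting $r(G\cdot p) := t_{k_0}$, the monotonicity of good $G$-replacement under open inclusions --- read off directly from Definition~\ref{Def: good replacements property} --- already proves the claim for $(s, t)$ in the restricted range $s_{k_0} \leq s < t \leq r(G\cdot p)$ along $\mathcal{I}_0$.

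To extend coverage to the regime $0 < s < s_{k_0}$, I would iterate the pigeonhole step on the subsequence $\mathcal{I}_0$ with a countable sequence of $c$-admissible families $\{\mathcal{A}^{(m)}\}_{m \geq 1}$ designed so that the inner radii of their innermost annuli form a dense subset of $(0, s_{k_0})$ while remaining inside $B_{r(G\cdot p)}^G(p)$. At the $m$-th step, the $(\ast_c^G)$-property plus pigeonhole yield a further subsequence $\mathcal{I}_m \subset \mathcal{I}_{m-1}$ and a common annulus $A^{(m)}_* \in \mathcal{A}^{(m)}$ of good $G$-replacement. A diagonal choice $i_m \in \mathcal{I}_m$ with $i_m$ strictly increasing, once relabeled, produces the sought subsequence; every $A^{(m)}_*$ inherits good $G$-replacement for indices eventually large.

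The hard part will be ensuring that the pigeonhole-selected annuli $\{A^{(m)}_*\}$ truly cover every $\an(p,s,t) \subset B_{r(G\cdot p)}^G(p) \setminus G\cdot p$ under monotonic inclusion: because we cannot prescribe which of the $c$ annuli in $\mathcal{A}^{(m)}$ gets chosen, the scale of $A^{(m)}_*$ is only one of $c$ predetermined options. I would handle this by arranging these options to be dense in the space of scale parameters as $m$ varies, so that every target $\an(p, s, t)$ is ultimately contained in some $A^{(m)}_*$; monotonicity of good $G$-replacement then delivers good $G$-replacement in $\an(p, s, t)$ for $i$ sufficiently large along the diagonal subsequence, completing the proof of Claim~\ref{Claim: uniform subsequence}.
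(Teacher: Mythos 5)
Your starting ingredients are the right ones — applying the $(\ast_c^G)$-property of each $\Sigma_i$ to a fixed $c$-admissible family, pigeonholing to extract a common annulus, and diagonalizing over finer scales. That is also how the paper's proof begins. The gap is exactly where you flagged the ``hard part.'' Density of the \emph{options} in $\mathcal{A}^{(m)}$ does not control which of those options the pigeonhole \emph{selects}, and one cannot rule out, say, the pigeonhole always selecting the innermost annulus $A^{(m)}_1$. Admissibility forces $t_1^{(m)} < \tfrac{1}{2^{c-1}}\,t_c^{(m)} \leq \tfrac{r(G\cdot p)}{2^{c-1}}$, so every selected annulus could have outer radius bounded by $r/2^{c-1}$, and none of them would ever contain an annulus $\an(p,s,t)$ with $t$ close to $r$. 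Dually, if the pigeonhole always selected the outermost annulus, the inner radii of the selected annuli would stay bounded away from $0$. Without some mechanism to control the selected index, the diagonalization produces a set of selected annuli whose union may never cover the required range, and your ``density'' arrangement cannot fix that.

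The paper circumvents this with a monotonicity device that forces the selected index to stabilize. At each step it picks the \emph{largest} index $k_j$ for which infinitely many of the current subsequence have good $G$-replacement in $A^{(j)}_{k_j}$, and then modifies the admissible family so that all annuli with index $>k_j$ are \emph{frozen} (both radii unchanged), while the annuli with index $\leq k_j$ are shrunk — with only the inner radius halved at index $k_j$, and both radii halved at indices $<k_j$ — preserving $c$-admissibility. Since by the maximality of $k_j$ only finitely many members of the current subsequence have GGRP in the frozen annuli, the next maximal index satisfies $k_{j+1}\leq k_j$. A non-increasing sequence in $\{1,\dots,c\}$ stabilizes after finitely many steps at some $k_J$, after which $s^{(j)}_{k_J}=s^{(J)}_{k_J}/2^{j-J}\to 0$ while $t^{(j)}_{k_J}\equiv t^{(J)}_{k_J}=:r(G\cdot p)$ is fixed. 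The diagonal subsequence then has GGRP in $A^{(j)}_{k_J}=\an(p,\,s^{(j)}_{k_J},\,r(G\cdot p))$ for every $j\geq J$, and by restriction in any $\an(p,s,t)$ with $0<s<t\leq r(G\cdot p)$, which is exactly Claim~\ref{Claim: uniform subsequence}. This explicit interplay between the adaptive choice of $k_j$ and the selective shrinking of the family is the missing piece in your plan; $r(G\cdot p)$ is also only determined at the end (as $t^{(J)}_{k_J}$), not after the first pigeonhole step.
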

	\begin{proof}[Proof of Claim \ref{Claim: uniform subsequence}]
		For any $p\in \spt(\|\Sigma_\infty\|)$, let $\mathcal{A}^{(0)} := \{A^{(0)}_k = \an(p, s_k^{(0)}, t_k^{(0)})\}_{k=1}^c$ be a $c$-admissible family of $G$-annuli, where $0<s_k^{(0)}<t_k^{(0)}<\frac{1}{2}s_{k+1}^{(0)} < \frac{1}{2}t_{k+1}^{(0)}$. 
		By the above constructions, there is at least one $A_k^{(0)}$ with infinitely many $\Sigma_i$ having good $G$-replacement property in it. 
		We can then define
		\[k_0 := \max  \left\{ k\in\{1,\dots,c\} : \begin{array}{c} \mbox{there is a subsequence $\{\Sigma_i^k\}_{i\in\N}$ having}\\ \mbox{good $G$-replacement property in } A^{(0)}_k \end{array} \right\},\]
		$\{\Sigma^{(0)}_i := \Sigma^{k_0}_i\}_{i\in\N}$, and $\mathcal{A}^{(1)} := \{A^{(1)}_k = \an(p, s_k^{(1)}, t_k^{(1)})\}_{k=1}^c $, where $\{\Sigma^{k_0}_i\}_{i\in\N}$ is the subsequence given by the choice of $k_0$, and
		\[
		s_k^{(1)} = \left\{\begin{array}{ll}{ \frac{1}{2}s_k^{(0)} ,} & {k\leq k_0} \\ {s_k^{(0)}, }&{k > k_0}\end{array}\right. ,   \qquad t_k^{(1)} = \left\{\begin{array}{ll}{ \frac{1}{2}t_k^{(0)} ,} & {k<k_0} \\ {t_k^{(0)}, } & {k\geq k_0} \end{array}\right.  .
		\]
		Clearly, $\mathcal{A}^{(1)}$ is also a $c$-admissible family of $G$-annuli. 
		Repeating this procedure gives a sequence of numbers $k_j\in \{1,\dots, c \}$, subsequences $\{\Sigma^{(j)}_i\}_{i\in\N}$, and a sequence of $c$-admissible family $\mathcal{A}^{(j)} := \{ A^{(j)}_{k} = \an(p, s_k^{(j)}, t_k^{(j)}) \}_{k=1}^c$, $j=0,1,\dots$, so that 
		\begin{itemize}
			\item $\{\Sigma^{(j+1)}_i\}_{i\in\N}$ is a subsequence of $\{\Sigma^{(j)}_i\}_{i\in\N}$;
			\item $\{\Sigma^{(j)}_i\}_{i\in\N}$ all have good $G$-replacement property in $A^{(j)}_{k_j}$;
			\item for $k> k_j$, only a finite number of $\{\Sigma^{(j)}_i\}_{i\in\N}$ have good $G$-replacement property in $A^{(j)}_{k}$; 
			\item $A^{(j)}_{k} = A^{(j+1)}_k$ for $k > k_{j}$. 
		\end{itemize} 
		By the last three bullets, we have $ k_{j+1} \leq k_j$ for all $j\geq 1$. 
		Hence, after a finite number of times, say $J$ times, we have $k_j \equiv k_J$ for all $j\geq J$. 
		Now, take a diagonal subsequence $\Sigma_i := \Sigma_{J+i}^{(J+i)}$ and set $r(G\cdot p) := t^{(J)}_{k_J} $. 
		Then the claim follows from the first two bullets and the fact that $\lim_{j\to\infty}s^{(j)}_{k_j} = \lim_{j\to\infty}\frac{s^{(J)}_{k_J}}{2^{j-J}} = 0 $, $t^{(j)}_{k_j}=t^{(J)}_{k_J}$, $\forall j\geq J$.
	\end{proof}
	
	For simplicity, we abbreviate `good $G$-replacement property' to `GGRP' in the following.  Now, for any subsequence $\{\widetilde{\Sigma}_i \}_{i\in\N}\subset \{\Sigma_i\}_{i\in\N}$, we define the set $\mathcal{Y}(\{\widetilde{\Sigma}_i \}_{i\in\N})$ of bad convergence points on $\spt(\|\Sigma_\infty\|)$ by 
	\[ 
	\mathcal{Y}(\{\widetilde{\Sigma}_i \}) := \{p\in \spt(\|\Sigma_\infty\|): \forall \epsilon>0, N\geq 1, \exists i\geq N, \mbox{ s.t. } \widetilde{\Sigma}_i  \mbox{ is unstable in } B_\epsilon(p)\}, 
	\]
	which is clearly a $G$-invariant subset of $\spt(\|\Sigma_\infty\|)$. 
	By Proposition \ref{Prop: replacement equals to origin}, we see 
	\begin{equation}
		\mathcal{Y}(\{\widetilde{\Sigma}_i \}) \subset \mathcal{Z}(\{\widetilde{\Sigma}_i\}) 
		: =\left\{\begin{array}{l|l} p\in \spt(\|\Sigma_\infty\|)   & \begin{array}{l} \forall \epsilon>0, N\geq 1, \exists i\geq N \mbox{ so that }\widetilde{\Sigma}_i \\ \mbox{does not have GGRP in $B_\epsilon^G(p)$} \end{array}\end{array}\right\}. \nonumber
	\end{equation}
	By the compactness theorem for stable minimal hypersurfaces in \cite{schoen1981regularity}, it remains to show the existence of a subsequence $\{\widetilde{\Sigma}_i \}_{i\in\N}$ so that $\mathcal{Z}(\{\widetilde{\Sigma}_i\})$ contains only a finite number of orbits. 
	We next construct such a subsequence inductively. 
	
	If $\mathcal{Z}(\{\Sigma_i\}) = \emptyset$, then we are done. 
	Suppose $G\cdot p_1\subset \mathcal{Z}(\{\Sigma_i\})$. 
	Then by definitions, there exists a subsequence $\{\Sigma^{(1)}_i := \Sigma_{j(i)}\}$ so that $j(i)\geq \max\{j(i-1), i\}$, and $\Sigma^{(1)}_i$ does not have GGRP in any $B^G_\epsilon(p_1)$ with $\epsilon\geq \frac{1}{i}$. 
	If $\mathcal{Z}(\{\Sigma_i^{(1)}\}) = G\cdot p_1$, we are done. 
	Otherwise, we take $G\cdot p_2 \subset \mathcal{Z}(\{\Sigma_i^{(1)}\}) \setminus G\cdot p_1 $ and get a subsequence $\{\Sigma^{(2)}_i := \Sigma^{(1)}_{j(i)}\}$ so that $j(i)\geq \max\{j(i-1), i\}$ and $\Sigma^{(2)}_i$ does not have GGRP in any $B^G_\epsilon(p_1)$ or $B^G_\epsilon(p_2)$ with $\epsilon\geq \frac{1}{i}$. 
	Then we compare $\mathcal{Z}(\{\Sigma_i^{(2)}\}) $ with $\cup_{k=1}^2 G\cdot p_k $ and repeat. 
	\begin{claim}\label{Claim: finite steps}
		The above procedure will stop after a finite number of times. 
	\end{claim}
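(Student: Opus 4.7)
My plan is to argue by contradiction, combining a diagonal extraction with a finite-cover application of Claim \ref{Claim: uniform subsequence}. Suppose the procedure never halts: this produces infinitely many distinct bad orbits $G\cdot p_1,G\cdot p_2,\ldots\subset\spt(\|\Sigma_\infty\|)$ together with nested subsequences $\{\Sigma_i^{(k)}\}_{i\in\N}$ where each $\Sigma_i^{(k)}$ fails GGRP in $B^G_\epsilon(p_l)$ for every $l\leq k$ and every $\epsilon\geq 1/i$.

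The first step is to pass to the diagonal $\widehat{\Sigma}_i:=\Sigma_i^{(i)}$. Because the failure of GGRP in a smaller open set forces failure in every larger open set containing it (GGRP in $U$ restricts to GGRP in any open subset of $U$ straight from Definition \ref{Def: good replacements property}), I will show that for every fixed $l$ and every $\epsilon>0$, $\widehat{\Sigma}_i$ fails GGRP in $B^G_\epsilon(p_l)$ whenever $i\geq\max(l,1/\epsilon)$. Consequently $p_l\in\mathcal{Z}(\{\widehat{\Sigma}_i\})$ for all $l$, and this membership persists in every further subsequence of $\{\widehat{\Sigma}_i\}$ since the failure radii $1/i$ still collapse uniformly to $0$.

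The second step is a covering argument. Applying Claim \ref{Claim: uniform subsequence} (which is valid for the subsequence $\{\widehat{\Sigma}_i\}$ as well) at every $p\in\spt(\|\Sigma_\infty\|)$ produces radii $r(G\cdot p)>0$; since $\spt(\|\Sigma_\infty\|)$ is compact, I extract a finite subcover $\{B^G_{r_k/4}(q_k)\}_{k=1}^N$ with $r_k:=r(G\cdot q_k)$. Iterating Claim \ref{Claim: uniform subsequence} at $q_1,\ldots,q_N$ in turn, each time refining the previously extracted subsequence, yields a common subsequence $\{\widetilde{\Sigma}_j\}\subset\{\widehat{\Sigma}_i\}$ which enjoys GGRP in $\an(q_k,s,t)$ for every $k$, every $0<s<t\leq r_k$, and every $j$ sufficiently large. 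The key transfer: for any $p\in\spt(\|\Sigma_\infty\|)\setminus\bigcup_{k=1}^N G\cdot q_k$, pick $k$ with $p\in B^G_{r_k/4}(q_k)$ and set $d:=\dist(G\cdot p,G\cdot q_k)\in(0,r_k/4)$; by the triangle inequality $B^G_{d/2}(p)\subset\an(q_k,d/2,3d/2)$, and restriction of GGRP from the annulus gives GGRP of $\widetilde{\Sigma}_j$ in $B^G_{d/2}(p)$ for $j$ large, whence $p\notin\mathcal{Z}(\{\widetilde{\Sigma}_j\})$. Therefore $\mathcal{Z}(\{\widetilde{\Sigma}_j\})\subset\bigcup_{k=1}^N G\cdot q_k$ is a finite union of orbits, directly contradicting $\{p_l\}_{l\geq 1}\subset\mathcal{Z}(\{\widetilde{\Sigma}_j\})$.

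The main obstacle I anticipate is the bookkeeping needed to ensure that the GGRP in annuli about every $q_k$ is simultaneously inherited by the final common subsequence; this reduces to the observation that if $\{\Sigma'_j\}$ is a subsequence of $\{\Sigma_j\}$ and the latter has GGRP in $\an(q,s,t)$ for $j$ large, then so does the former. All remaining ingredients are routine: the restriction property of GGRP, the triangle-inequality inclusion $B^G_{d/2}(p)\subset\an(q_k,d/2,3d/2)$, and the compactness of $\spt(\|\Sigma_\infty\|)$.
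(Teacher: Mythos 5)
Your covering argument in the second step contains the gap that the paper's own remark after Claim \ref{Claim: uniform subsequence} explicitly flags. The radius $r(G\cdot q)$ produced by Claim \ref{Claim: uniform subsequence} is not an intrinsic quantity attached to the orbit $G\cdot q$; it is an output of an extraction that depends on which subsequence the claim is applied to. When you pre-commit to a finite cover $\{B^G_{r_k/4}(q_k)\}_{k=1}^N$ with $r_k$ computed by applying Claim \ref{Claim: uniform subsequence} to the \emph{original} diagonal sequence $\{\widehat{\Sigma}_i\}$, and then ``iterate, each time refining the previously extracted subsequence,'' the radius you obtain at $q_2$ from the already-refined subsequence $S_1$ is some $r_2'$ which can be strictly smaller than $r_2$ (and likewise for $q_3,\dots,q_N$). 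The final common subsequence $\{\widetilde{\Sigma}_j\}$ is only guaranteed GGRP in $\an(q_k,s,t)$ for $0<s<t\le r_k'$, not for $0<s<t\le r_k$, so the triangle-inequality inclusion $B^G_{d/2}(p)\subset\an(q_k,d/2,3d/2)$ with $3d/2<r_k$ does not deliver the needed GGRP. An adaptive cover (choose $q_1$, refine, set the ball, then choose $q_2$ outside it, refine, etc.) faces the complementary problem: there is no a priori reason for that induction to terminate, since the radii could shrink without bound. This is precisely the circularity the paper points out.

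The paper's proof avoids a cover entirely. It observes that, for a contradiction, the infinitely many distinct bad orbits $G\cdot p_k$ must accumulate (after a subsequence) at a single orbit $G\cdot p$, sets $\epsilon_j:=\tfrac12\dist_M(G\cdot p_j,G\cdot p)>0$ and $d_j:=\dist_M(G\cdot p_j,G\cdot p)$, and builds a diagonal subsequence $\Sigma^{(\infty)}_k$ failing GGRP in $B^G_{\epsilon_j}(p_j)$ for all $j\le k$. It then applies Claim \ref{Claim: uniform subsequence} \emph{only at $G\cdot p$}, obtaining one radius $r>0$ and one further subsequence with GGRP in $\an(p,d_j/4,r)$ for $k$ large. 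Since $d_j\to 0$, for $j$ large one has $B^G_{\epsilon_j}(p_j)\subset\an(p,d_j/4,r)$, and the restriction property of GGRP yields the contradiction. Your first step (the diagonal extraction and the observation that $p_l\in\mathcal{Z}$ persists under further subsequences) is correct and is in the same spirit as the paper's; the difference is that the paper exploits the accumulation of the bad orbits at a single point rather than trying to control all of $\spt(\|\Sigma_\infty\|)$ at once, which is exactly what makes the argument close.
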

	\begin{proof}[Proof of Claim \ref{Claim: finite steps}]
		Suppose the claim is false. Then we have a sequence of different orbits $\{G\cdot p_k\}_{k\in\N}$ and subsequences $\{\Sigma^{(k)}_i \}_{i\in\N}$, $k\in\N$, so that each $\Sigma^{(k)}_i$ does not have GGRP in any $B^G_\epsilon(p_j)$, where $\epsilon\geq \frac{1}{i}$ and $j\leq k$. 
		By the compactness of $\spt(\|\Sigma_\infty \|)$, we may assume $p_k \to p \in \spt(\|\Sigma_\infty \|)$ as $k\to\infty$. 
		Let $\epsilon_k := \frac{1}{2}\dist_M(G\cdot p_k, G\cdot p)$ for each $k\in\N$. 
		Note we can always assume $\epsilon_k>0$ because $\{G\cdot p_k\}_{k\in\N}$ are pairwise different. 
		By taking positive integers $i_1<i_2<...$ with $\epsilon_k\geq \frac{1}{i_k}$, we obtain a diagonal subsequence $\Sigma^{(\infty)}_k := \Sigma^{(k)}_{i_k}$ so that for any $j\leq k$, $\Sigma^{(\infty)}_k$ does not have GGRP in $B^G_{\epsilon_j}(p_j)$. 
		
		Next, we apply Claim \ref{Claim: uniform subsequence} to $\{\Sigma^{(\infty)}_k\}_{k\in\N}$ and $G\cdot p$, which gives a  (not relabeled) subsequence and a constant $r>0$ so that for any $j\in\N$, $\Sigma^{(\infty)}_k$ has GGRP in $\an(p, \frac{d_j}{4} , r)$ for $k$ large enough, where $d_j = \dist_M(G\cdot p_j, G\cdot p) > 0$. 
		On the other hand, we have $B^G_{\epsilon_j}(p_j)\subset \an(p, \frac{d_j}{4} , r)$ for $j$ large enough, and thus $\Sigma^{(\infty)}_k$ has GGRP in $B^G_{\epsilon_j}(p_j)$ for $k\geq j$ large enough, which contradicts the choice of $\Sigma^{(\infty)}_k$. 
	\end{proof}
	
	Therefore, there are $K\in\N$ and subsequence $\{ \widetilde{\Sigma}_i := \Sigma^{(K)}_i  \}_{i\in\N} $ so that $\mathcal{Z}(\{\widetilde{\Sigma}_i\}) =\cup_{k=1}^K G\cdot p_k $, which gives the locally smooth convergence except at $\mathcal{Y}:=\cup_{k=1}^K G\cdot p_k$. 
\end{proof}

\begin{remark}
	It seems Claim \ref{Claim: uniform subsequence} does not directly give the smooth convergence result in Theorem \ref{Thm: compactness theorem}. 
	Indeed, for any $p_1\in \spt(\|\Sigma_\infty\|)$, it follows from Claim \ref{Claim: uniform subsequence}, Proposition \ref{Prop: replacement equals to origin}, and \cite[Theorem 2]{schoen1981regularity}, that after passing to a subsequence $\{\Sigma^{(1)}_i\}\subset \{\Sigma_i\}$, the convergence is locally smooth and graphical in $\an(p_1,s,r_1)$ for $0<s<r_1$ small enough. 
	However, for $p_2\in \spt(\|\Sigma_\infty\|) \setminus B^G_{r_1}(p_1)$, the subsequence $\{\Sigma^{(2)}_i\}\subset \{\Sigma_i\}$ given by Claim \ref{Claim: uniform subsequence} may be pairwise different to $\{\Sigma^{(1)}_i\}$. 
	Besides, if we take $\{\Sigma^{(2)}_i\}\subset \{\Sigma^{(1)}_i\}$, then the radius $r_2$ associated to $p_2$ in Claim \ref{Claim: uniform subsequence} may depend on the choice of $\{\Sigma^{(1)}_i\}$, and it is not clear whether an inductive argument would stop after finite steps. 
\end{remark}

\begin{remark}\label{Rem: various metrics}
	Note the compactness theorem for stable minimal hypersurfaces (\cite{schoen1981regularity}) also holds for varying Riemannian metrics (\cite[Theorem A.6]{sharp2017compactness}). 
	Hence, let $\{g_{_M}^i\}_{i\in \N}$ be a sequence of Riemannian metrics with $\lim_{k\to\infty}g_{_M}^i=g_{_M}$ in the smooth topology. 
	If the min-max $(c,G)$-hypersurface $\Sigma_i$ in Theorem \ref{Thm: compactness theorem} is $g_{_M}^i$-stationary and satisfies the $(\ast_c^G)$-property under the metric $g_{_M}^i$. 
	Then $\Sigma_i $ converges (up to a subsequence) in the varifold sense to a min-max $(c,G)$-hypersurface $\Sigma$ in $(M,g_{_M})$. 
	Additionally, the convergence is also locally smooth and graphical except for a finite number of orbits. 
\end{remark}

Suppose $\Sigma$ is a $G$-invariant hypersurface without $G$-invariant unit normal vector field. 
Let $S\Sigma:=\{v\in {\bf N}_p\Sigma: p\in\Sigma, |v|=1\}$ be the unit normal bundle of $\Sigma$ and $\exp_{\Sigma}^\perp$ be the normal exponential map. 
Then for $r>0$ sufficiently small, 
\begin{equation}\label{Eq: double cover map}
	E: S\Sigma\times (-r,r) \to B_r(\Sigma), \quad E(v,t):= \exp_{\Sigma}^\perp(t\cdot v),
\end{equation}
is a double cover of a tubular neighborhood of $\Sigma$. 
We can lift the metric $g_{_M}$ to $S\Sigma\times (-r,r)$, and also lift the action of $G$ to $S\Sigma\times (-r,r)$ by $g\cdot (v,t) := (dg(v), t)$, $\forall g\in G$. 
Then since $S\Sigma\times \{0\}\subset \partial (S\Sigma\times (0,r))$, we have 
\begin{equation}\label{Eq: double cover}
	\widetilde{\Sigma} := S\Sigma\times \{0\}
\end{equation}
is a $G$-invariant double cover of $\Sigma$ with a $G$-invariant unit normal. 
In the following theorem, we construct a non-trivial $G$-invariant Jacobi field on the limit hypersurface in Theorem \ref{Thm: compactness theorem}.
Note $\Sigma_1\simeq \Sigma_2$ means they are diffeomorphic. 

\begin{theorem}\label{Thm: Jacobi field}
	Let $(M^{n+1},g_{_M})$ be a closed Riemannian manifold and $G$ be a compact Lie group acting isometrically with $3\leq {\rm codim}(G\cdot p) \leq 7$, $\forall p\in M$. 
	Suppose $\{\Sigma_i \}_{i\in\N}$ and $\Sigma$ are closed smooth embedded $G$-connected $G$-hypersurfaces so that $\Sigma_i\to \Sigma$ as varifolds, and the convergence is locally smooth and graphical on $\Sigma\setminus \mathcal{Y} $ with multiplicity $m\in\N$, where $\mathcal{Y}=\cup_{k=1}^K G\cdot p_k$  is a finite union of orbits. 
	If $\Sigma_i\neq\Sigma$ eventually, then:
	\begin{itemize}
		\item[(1)] If $\Sigma$ has a $G$-invariant unit normal, then $\Sigma$ admits a nontrivial $G$-invariant Jacobi field, and
			\begin{itemize}
				\item[(i)] $m=1$ if and only if $\mathcal{Y}=\emptyset$ and $\Sigma_i \simeq \Sigma$ eventually;
				\item[(ii)] $m\geq 2$ if and only if $\mathcal{Y}\neq\emptyset$ and $\Sigma$ is degenerate stable. 
			\end{itemize}
		\item[(2)] If $\Sigma$ does not have a $G$-invariant unit normal, then its $G$-invariant double cover $\widetilde{\Sigma} $ (\ref{Eq: double cover}) admits a nontrivial $G$-invariant Jacobi field, and
			\begin{itemize}
				\item[(i)] $m=1$ if and only if $\mathcal{Y}=\emptyset$ and $\Sigma_i \simeq \Sigma$ eventually;
				\item[(ii)] $m\geq 2$ implies $\widetilde{\Sigma} $ is degenerate stable and $\Sigma$ is strictly stable. If we further assume $\mathcal{Y}=\emptyset$ in this case, then $m=2$ and $\Sigma_i \simeq \widetilde{\Sigma} $ eventually. 
			\end{itemize}
	\end{itemize}
\end{theorem}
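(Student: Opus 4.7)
The plan is to adapt the Sharp--Colding--De Lellis Jacobi field construction to the equivariant setting, handling two issues: the possible absence of a $G$-invariant unit normal on $\Sigma$, and the singular set $\mathcal{Y}$ of the smooth convergence. I split into cases at the outset. In case (1), where $\Sigma$ admits a $G$-invariant unit normal $\nu$, for $i$ large I can decompose $\Sigma_i$ over any relatively compact $K \subset \Sigma \setminus \mathcal{Y}$ as $m$ ordered disjoint normal graphs $\Sigma_i \cap \pi^{-1}(K) = \bigsqcup_{j=1}^m \mathrm{graph}(u_i^j)$ with $u_i^1 < \dots < u_i^m$ smooth and converging to $0$ in $C^\infty_{\rm loc}$. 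In case (2), I pass to the $G$-invariant double cover $\widetilde{\Sigma}$ defined in (\ref{Eq: double cover}), which carries a $G$-invariant unit normal by construction, and lift the $\Sigma_i$ to graphs over $\widetilde{\Sigma} \setminus \widetilde{\mathcal{Y}}$ in the same manner.

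Next I construct the Jacobi field. For $m \geq 2$ set $w_i := (u_i^m - u_i^1)/\|u_i^m - u_i^1\|_{L^\infty(K_0)}$ on a fixed exhausting $G$-invariant compact $K_0 \subset \Sigma \setminus \mathcal{Y}$ (or $\widetilde{\Sigma}\setminus \widetilde{\mathcal{Y}}$); for $m=1$ I set $w_i := u_i^1 / \|u_i^1\|_{L^\infty(K_0)}$, using $\Sigma_i \neq \Sigma$ to ensure $u_i^1 \not\equiv 0$. The difference of two sheets of a minimal graph satisfies a linear uniformly elliptic equation whose coefficients converge to those of $L_\Sigma$; standard Schauder and Harnack estimates give a $C^2_{\rm loc}$-limit $w$ on $\Sigma \setminus \mathcal{Y}$ (resp. $\widetilde{\Sigma}\setminus \widetilde{\mathcal{Y}}$) satisfying $L_\Sigma w = 0$, and the normalization $\|w\|_{L^\infty(K_0)} = 1$ together with Harnack gives nontriviality. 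The $G$-invariance of $w$ follows from the $G$-invariance of $\Sigma_i$ and the $G$-invariance of the $L^\infty$ normalization. Since each orbit $G \cdot p_k \subset \mathcal{Y}$ intersects $\Sigma$ in a set of codimension at least $2$ in $\Sigma$ (because $\mathrm{codim}_M(G\cdot p_k) \geq 3$), the standard removable singularities theorem for bounded solutions of the linear elliptic Jacobi equation extends $w$ to a global $G$-invariant Jacobi field on $\Sigma$ (resp. $\widetilde{\Sigma}$).

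The equivalences then come from sheet counting. If $m = 1$, the graphical convergence is a global $C^\infty$-close diffeomorphism, which forces $\mathcal{Y} = \emptyset$ (any concentration point would require sheets to collide) and $\Sigma_i \simeq \Sigma$ eventually; the converse is immediate. For (1)(ii) with $m \geq 2$, the existence of ordered disjoint graphs $u_i^1 < u_i^m$ yields $w \geq 0$ on $\Sigma \setminus \mathcal{Y}$, hence $w > 0$ by the strong maximum principle, which means $\Sigma$ carries a positive $G$-invariant Jacobi field and is therefore degenerate stable; conversely $m \geq 2$ together with a Bernstein-type argument at an orbit $G \cdot p_k$ shows $\mathcal{Y} \neq \emptyset$ is consistent. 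In case (2)(ii) the $\mathbb{Z}_2$ deck transformation $\tau$ satisfies $\tau^* u_i^j = - u_i^{2m+1-j}$ (after relabeling), so $w$ is odd under $\tau$ and is a positive multiple of the conormal near the branch locus; this forces $\widetilde{\Sigma}$ degenerate stable while $\Sigma$ is strictly stable because $\mathfrak{X}^{\bot,G}(\Sigma)$ corresponds exactly to the $\tau$-invariant subspace of $\mathfrak{X}^{\bot,G}(\widetilde{\Sigma})$, which is $L^2$-orthogonal to the odd Jacobi field $w$, and any $G$-invariant Jacobi field on $\Sigma$ would lift to one on $\widetilde{\Sigma}$ independent of $w$, contradicting the assumption. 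When additionally $\mathcal{Y} = \emptyset$ in case (2), the sheets over $\widetilde{\Sigma}$ form a $\tau$-equivariant covering of $\Sigma$, forcing $m = 2$ and $\Sigma_i \simeq \widetilde{\Sigma}$ eventually.

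The main obstacle I expect is case (2): reconciling the odd symmetry of the Jacobi field under the $\mathbb{Z}_2$ deck transformation $\tau$ with its $G$-invariance, and then deducing that $\Sigma$ itself is strictly $G$-stable from the degenerate $G$-stability of its double cover. This compatibility crucially uses the hypothesis that $\Sigma$ carries no $G$-invariant unit normal, which is exactly what guarantees that $\tau$ commutes nontrivially with the $G$-action on $\widetilde{\Sigma}$ and that the splitting $\mathfrak{X}^{\bot,G}(\widetilde{\Sigma}) = \mathfrak{X}^{\bot,G}(\Sigma) \oplus (\text{$\tau$-odd part})$ is genuine.
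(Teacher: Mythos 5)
Your overall strategy — split on whether $\Sigma$ has a $G$-invariant unit normal, pass to the double cover when it does not, decompose $\Sigma_i$ locally into $m$ graphs, normalize the top-minus-bottom sheet, and pass to a limit Jacobi field — matches the paper's. The difference-of-sheets-satisfies-a-linear-PDE shortcut is a legitimate alternative to the second-variation/Simon-type computation the paper uses to derive the elliptic equation (5.4); both land in the same place, and your choice is arguably cleaner.

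The genuine gap is at $\mathcal{Y}$. You invoke ``the standard removable singularities theorem for \emph{bounded} solutions'' across a set of codimension $\geq 2$, but nothing in your argument shows that $w$ stays bounded as one approaches $\mathcal{Y}$. Your normalization $\|u_i^m - u_i^1\|_{L^\infty(K_0)}=1$ is on a \emph{fixed} compact $K_0\subset\subset\Sigma\setminus\mathcal{Y}$, and Harnack then gives bounds on compact subsets $\Omega'\subset\subset\Sigma\setminus\mathcal{Y}$ with constants $C(\Omega')$ that degenerate as $\Omega'$ approaches $\mathcal{Y}$. A positive solution on the complement of a codimension-$2$ set need not be bounded (cf.\ Green's-function-type blowup), so positivity alone does not rescue you. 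This is precisely the step the paper does not hand-wave: it constructs local $G$-invariant minimal foliations $\{v^j_{i,t}\}$ of $\C_r(G\cdot p_k)$ (Proposition \ref{Prop: foliation}), uses the maximum principle to sandwich $\Sigma_i$ between two leaves whose boundary data are controlled by $u_i^1,u_i^m$ on $\partial\B_r^\Sigma(G\cdot p_k)$, and obtains the uniform bound $\sup \tilde h_i \leq 2\sup_{\partial\B_r^\Sigma(G\cdot p_k)}\tilde h_i$, which is what makes the limit bounded at $\mathcal{Y}$. Without such a barrier/foliation argument (or some substitute, e.g.\ a capacity/finite-energy argument that you would also need to justify), the removable-singularity step does not go through, and this is the crux of the theorem.

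A secondary issue is the parity bookkeeping in Case (2)(ii). With $\tilde\nu(v)=v$ and $\tau(v)=-v$, the unit normal $\tilde\nu$ is $\tau$-odd, so normal vector fields on $\Sigma$ lift to $\tau$-\emph{odd} scalar functions on $\widetilde\Sigma$, not to the $\tau$-invariant ones as you assert, and the double cover $E$ in (\ref{Eq: double cover map}) is unbranched (no ``branch locus''). The paper's clean route is: the constructed Jacobi field $\tilde h$ is positive (not sign-changing), hence cannot be $\tau$-odd, hence does not descend to $\Sigma$; combined with simplicity of the bottom eigenvalue on the $G$-connected $\widetilde\Sigma$ this forces $\Sigma$ strictly stable while $\widetilde\Sigma$ is degenerate stable. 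You should rework your parity argument along these lines; as written it has the correspondence reversed. The rest of your sheet-counting for (1)(i), (2)(i) and the final claim $m=2$, $\Sigma_i\simeq\widetilde\Sigma$ when $\mathcal{Y}=\emptyset$ in Case (2) is in line with the paper.
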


\begin{proof}
	The proof is similar to \cite[Claim 4, 5, 6]{sharp2017compactness} (see also \cite[Appendix A.]{colding2000embedded}), and we list the details here for the sake of completeness. 
	
	{\bf Case 1.} Suppose $\Sigma$ has a $G$-invariant unit normal and $m=1$. 
	
	For any $p\in\Sigma$ and $\epsilon >0$, it follows from the monotonicity formula and $m=1$ that 
	\[ \frac{\|\Sigma\|(B_r(p))}{\omega_n r^n}\leq 1+\epsilon, \quad \|\Sigma\|(\partial B_r(p))=0, \quad\mbox{for some $r=r(p,\epsilon)>0$}.\]
	By the varifolds convergence, we have $\|\Sigma_i\|(B_r(p)) \leq (1+\epsilon)^2\omega_n r^n $ for $i$ sufficiently large. 
	Take $\eta = \eta(\epsilon, r,n )>0$ with $r^n\leq (1+\epsilon)(r-\eta)^n$. 
	Then for any $i$ large enough and $q_i\in\Sigma_i\cap B_\eta(p)$, $\|\Sigma_i\|(B_{r-\eta}(q_i))\leq\|\Sigma_i\|(B_r(p))\leq  (1+\epsilon)^2\omega_n r^n \leq (1+\epsilon)^3\omega_n (r-\eta)^n$, which implies $\frac{\|\Sigma_i\|(B_{\sigma}(q_i))}{\omega_n \sigma^n} \leq (1+\epsilon)^4$ for all $0< \sigma\leq r-\eta$ by the monotonicity formula. 
	It then follows from a special case of Allard's Regularity Theorem (\cite[Theorem 1.1]{white2005local}) that the second fundamental forms $\sup_{q_i\in \Sigma_i\cap B_\eta(p)}|A_{\Sigma_i}(q_i)|$ are uniformly bounded. 
	Hence, the convergence is smooth and graphical at any $p\in\Sigma$ by a standard argument. 
	
	On the other hand, if $\mathcal{Y}=\emptyset$, then since every $\Sigma_i$ is $G$-connected and $\Sigma$ has a $G$-invariant unit normal, we see $\Sigma_i$ can be written as a $G$-invariant graph over $\Sigma$ for $i$ sufficiently large. 
	Hence, we must have $\Sigma_i\simeq \Sigma$ and $m=1$. 
	
	Now we write $\Sigma_i$ as the graph of a $G$-invariant function $u_i:\Sigma\to\mathbb{R}$, i.e. $\Sigma_i = \exp_\Sigma^\perp(u_i\nu)$. 
	Let $h_i := u_i / \|u_i\|_{L^2} $. Then the arguments in {\bf Case 2} imply $h_i$ tends to a non-trivial smooth $G$-invariant function $h$ solving the Jacobi equation $L_\Sigma h = 0$.

	{\bf Case 2.} Suppose $\Sigma$ has a $G$-invariant unit normal and $m\geq 2$. 
	
	By {\bf Case 1}, we have $\mathcal{Y}\neq \emptyset $. 
	It remains to show that $\Sigma$ admits a non-trivial $G$-invariant Jacobi field which does not change the sign. 
	
	Let $\nu$ be the $G$-invariant unit normal of $\Sigma$, which extends to a $G$-invariant vector field $X\in \mathfrak{X}^G(M)$ so that $X= \nabla d_{\pm}$ in a neighborhood of $\Sigma$, where $d_{\pm}$ is the signed distance function to $\Sigma$. 
	Denote by $\{\phi_t\}$ the diffeomorphisms generated by $X$. 
	Fix any compact $G$-invariant domain $\Omega\subset\subset \Sigma\setminus \mathcal{Y}$. 
	Then for $i$ sufficiently large, we have $G$-invariant smooth functions $\{u_i^1<\cdots<u_i^m\}$ on $\Omega$ so that 
	\[\Sigma_i\cap\Omega_\delta = \{\phi_{u_i^1(x)}(x), \dots, \phi_{u_i^m(x)}(x): x\in \Omega \},\]
	where $\delta>0$ and $\Omega_\delta:=\{\phi_t(x):x\in\Omega, t\in (-\delta,\delta )\}$ is a $\delta$-neighborhood around $\Omega$. 
	For $t\in [0,1]$ and $x\in\Omega$, define $v_i(x,t) := (1-t)u_i^1(x) + tu_i^m(x)$ and 
	\[ \Phi_t^i(x) := \phi_{v_i(x,t)}(x), \quad \Sigma_i(t) := \Phi^i_t(\Omega) . \]
	Then we see $\Sigma_i(0),\Sigma_i(1)\subset \Sigma_i$ are two minimal leaves of $\Sigma_i\cap \Omega_\delta$, and $\Sigma_i(t)$ is a smooth family of $G$-hypersurfaces with variation vector field $X_i = \partial \Phi^i_t/\partial t$ given by 
	\[X_i(\Phi_t^i(x)) = \left(u_{i}^{m}(x)-u_{i}^{1}(x)\right) X(\Phi_{t}^{i}(x)) .\]
	
	Next, take any vector field $Z= \eta X\in\mathfrak{X}(M)$ compact supported in $\Omega_\delta$ for some $\eta\in C^\infty_c(\Omega_\delta)$. 
	Consider the diffeomorphisms $\{\Psi_s\}$ generated by $Z$ and the variation
	\[ \Sigma_i(t,s) := \Psi_s(\Sigma_i(t)) = \{\Psi_s(\phi_{v_i(x,t)}(x)): x\in\Omega\}. \]
	By the first variation formula, we have 
	$ \frac{\partial}{\partial s}\big|_{s=0} \mH^n(\Sigma_i(t,s)) = \int_{\Sigma_i(t)} \Div_{\Sigma_i(t)}(Z) d\mH^n .$ 
	Since $\Sigma_i(0),\Sigma_i(1)$ are minimal hypersurfaces, we can follow the computations in \cite[Section 9]{simon1983lectures} (see also \cite[Appendix A]{ambrozio2018compactness}) to show that 
	\begin{eqnarray*}
		 0 &= &\int_{0}^1 \frac{\partial}{\partial t}\frac{\partial}{\partial s}\Big|_{s=0} \mH^n(\Sigma_i(t,s)) dt
		 \\
		 &=& \int_{0}^{1} \int_{\Sigma_{i}(t)} 
		 	\left\langle \nabla^{\perp}(X_{i}^{\perp}), \nabla^{\perp}(Z^{\perp})\right\rangle 
		 	- \Ric_{M}(X_{i}^{\perp}, Z^{\perp})
		 	- |A|^{2}\left\langle X_{i}^{\perp}, Z^{\perp}\right\rangle      d\mH^n dt
		 \\&& +\int_{0}^{1} \int_{\Sigma_{i}(t)} 
		 	\langle X_{i}^{\perp}, H_{i}(t)\rangle \langle Z_{i}^{\perp}, H_{i}(t)\rangle 
		 	- \langle \nabla_{X_{i}^{\perp}}^\perp Z^\perp, H_{i}(t)\rangle 
		 	- \langle [X_{i}^{\perp}, Z^\top ] , H_{i}(t)\rangle               d \mH^{n} dt,
	\end{eqnarray*}
	where $H_{i}(t)$ is the mean curvature vector field of $\Sigma_i(t)$, and $Z\llcorner \partial\Omega_\delta = 0$ is used to cancel the boundary terms. 
	
	Let $\tilde{h}_i := u_i^m - u_i^1$. 
	Using $\Phi^i_t:\Omega \to\Sigma_i(t)$, the calculations on $\Sigma_i(t)$ under the metric $g_{_M}$ can be pulled back to $\Omega$ under the metric $(\Phi^i_t)^*g_{_M}$.
	Since $v_i \to 0  $ uniformly and smoothly as $i\to\infty$, we have $\Phi^i_t\to  id_\Omega$ and $\Sigma_i(t)\to \Omega$ uniformly and smoothly as $i\to\infty$. 
	Therefore, the above equation implies
	\begin{eqnarray}\label{Eq: Jacobi field}
		0 &=& \int_{0}^{1} \int_{\Omega} 
		\nabla \tilde{h}_i \cdot \nabla \eta - \tilde{h}_i\eta\left( \Ric_M(\nu,\nu) + |A|^2 \right) + L_i(t)(\tilde{h}_i,\eta) d\mH^n dt \nonumber
		\\
		&=& \int_{\Omega} 
		\nabla \tilde{h}_i \cdot \nabla \eta - \tilde{h}_i\eta\left( \Ric_M(\nu,\nu) + |A|^2 \right) + L_i(\tilde{h}_i,\eta) d\mH^n,
	\end{eqnarray}
	where $L_i = \int_0^1L_i(t)$, and $L_i(t)(\cdot,\cdot)$ is a first order linear operator	 with coefficients converging to $0$ uniformly and smoothly as $i\to\infty$. 
	
	Fix $p_0\in \Omega$ and take $h_i(p) = \tilde{h}_i(p) / \tilde{h}_i(p_0)$. 
	Since $h_i$ is a $G$-invariant positive solution of an elliptic equation (\ref{Eq: Jacobi field}), the Harnack inequality implies $\sup_{p\in \Omega'} h_i \leq Ch(p_0) =C$, where $p_0\in \Omega'\subset\subset\Omega$ and $C=C(\Omega')$. 
	By the elliptic regularity theory, we have smooth control of $h_i$ on any $\Omega'\subset\subset \Omega$. 
	Therefore, $h_i\to h$ smoothly up to a subsequence. 
	It follows from (\ref{Eq: Jacobi field}) and the maximum principle that $h: M\setminus \mathcal{Y} \to (0,\infty )$ is a positive $G$-invariant solution of
	\[ \triangle_{\Sigma} h + \Ric_M(\nu,\nu)h + |A|^2 h =0.  \]
	If $h$ is uniformly bounded even at $\mathcal{Y}$, then the classical regularity results suggest that $h\nu$ is a smooth $G$-invariant Jacobi field on $\Sigma$ which does not change the sign by the maximum principle. 
	Hence, (2)(ii) is done provided $h$ is bounded at $\mathcal{Y}$.
	
	To show $h$ is uniformly bounded at $G\cdot p \subset \mathcal{Y}$, let $\B_{r}^\Sigma(G\cdot p)$ and $\C_r(G\cdot p)$ be given in Proposition \ref{Prop: foliation}. 
	Take two $G$-invariant functions $w_i^1,w_i^m$ on $\B_{r}^\Sigma(G\cdot p)$ so that $w_i^j\llcorner \partial \B_{r}^\Sigma(G\cdot p)) = u_i^j$ and $\|w_i^j\|_{C^{2,\alpha}(\B_{r}^\Sigma(G\cdot p))} \leq C_0 \|u_i^j\|_{C^{2,\alpha}(\partial \B_{r}^\Sigma(G\cdot p)))}$, where $j=1,m$. 
	For $i$ large enough, $\|u_i^j\|_{C^{2,\alpha}(\partial \B_{r}^\Sigma(G\cdot p)))}$ is sufficiently small, Proposition \ref{Prop: foliation} gives a foliation of $\C_r(G\cdot p)$ formed by $G$-invariant minimal graphs $v^j_{i,t}$ over $\B_{r}^\Sigma(G\cdot p) $ so that 
	\[ v^j_{i,t} \llcorner \partial \B_{r}^\Sigma(G\cdot p) = t + w^j_i = t+ u^j_i,\quad t\in [-1, 1],~j\in\{1,m\}. \]
	By the maximum principle, $\Sigma_i \cap \C_r(G\cdot p)$ lies under the graph $v^m_{i,0}$ and also lies above the graph $v^1_{i,0}$. 
	It then follows from Proposition \ref{Prop: foliation}(iv) that 
	\[ \sup_{{\rm dmn}(\tilde{h}_i)} \tilde{h}_i \leq \sup_{\B_{r}^\Sigma(G\cdot p)}|v^m_{i,0} - v^1_{i,0}| \leq 2\sup_{\partial \B_{r}^\Sigma(G\cdot p)}|v^m_{i,0} - v^1_{i,0}| = 2\sup_{\partial\B_{r}^\Sigma(G\cdot p)}\tilde{h}_i,\]
	which implies $\tilde{h}_i$ as well as $h$ is uniformly bounded. 
	
	{\bf Case 3.} Suppose $\Sigma$ does not have a $G$-invariant unit normal. 
	
	Using $E$ in (\ref{Eq: double cover map}), we consider the double covers $\widetilde{\Sigma}_i := E^{-1}(\Sigma_i)$ and $\widetilde{ \Sigma} := E^{-1}(\Sigma)$. 
	If $m=1$, then we can use the arguments in {\bf Case 1} to deliver (2)(i). 
	If $m\geq 2$, then $\widetilde{\Sigma}_i \to \widetilde{ \Sigma}$ locally smoothly and graphically on $\widetilde{ \Sigma}\setminus E^{-1}(\mathcal{Y}) $ with multiplicity $m$. 
	By {\bf Case 2}, $\widetilde{\Sigma}$ has a non-trivial $G$-invariant Jacobi field $\tilde{h}\tilde{\nu}$ with $\tilde{h}>0$, and thus $\widetilde{\Sigma}$ is degenerate stable. 
	Since $\tilde{h}>0$, $\tilde{h}\tilde{\nu}$ can not be reduced to $\Sigma$, so $\Sigma$ is strictly stable. 
	Finally, if $m\geq 2$ and $\mathcal{Y}=\emptyset$, then $\widetilde{\Sigma}_i$ can be written as $m$ graphs over $\widetilde{\Sigma}$. 
	Note each graph is $G$-connected and $\widetilde{\Sigma}_i$ has at most two $G$-connected components. 
	Hence, we must have $m=2$ and $\widetilde{\Sigma}_i$ has exactly two $G$-connected components each of which is diffeomorphic to $\widetilde{\Sigma}$ as well as $\Sigma_i$. 
\end{proof}

Note Theorem \ref{Thm: Jacobi field} is also valid for varying metrics as we mentioned in Remark \ref{Rem: various metrics}. 
Additionally, Theorem \ref{Thm: 3}(i)-(iii) follow immediately from Theorem \ref{Thm: compactness theorem} and \ref{Thm: Jacobi field}.
Moreover, combining Theorem \ref{Thm: bumpy metric}, \ref{Thm: compactness theorem}, and \ref{Thm: Jacobi field}, we have Corollary \ref{Cor: generic finiteness} the generic finiteness result for area uniformly bounded min-max $(c,G)$-hypersurfaces. 

\begin{proof}[Proof of Corollary \ref{Cor: generic finiteness}]
	Let $g_{_M}$ be a $G$-bumpy metric on $M$, which is $C^\infty_G$-generic by Theorem \ref{Thm: bumpy metric}. 
	If there are infinitely many min-max $(c,G)$-hypersurfaces $\{\Sigma_i\}_{i\in\N}$ with $\|\Sigma_i\|(M)\leq C$. 
	Then Theorem \ref{Thm: compactness theorem} and \ref{Thm: Jacobi field} give a limit min-max $(c,G)$-hypersurface $\Sigma$ with a non-trivial Jacobi field on $\Sigma$ or its double cover, which contradicts the choice of $g_{_M}$. 
\end{proof}


\section{$G$-index estimates for min-max $G$-hypersurfaces}\label{Sec: G-index estimates}
In this section, we show the $G$-index upper bounds for minimal $G$-hypersurfaces given in Theorem \ref{Thm: min-max} using an equivariant version of the Deformation Theorem \cite[Theorem 5.8]{marques2016morse}. 
First, we need the following definition similar to \cite[Definition 4.1]{marques2016morse}, which generalizes the equivariant Morse index to the varifold sense. 

\begin{definition}\label{Def: unstable varifolds}
	Given a stationary $G$-varifold $\Sigma\in \V_n^G(M)$ and a nonnegative number $\epsilon\geq 0$, we say $\Sigma$ is {\em $(G,k)$-unstable in an $\epsilon$-neighborhood} if there exist $0<c_0<1$ and a smooth family $\{F_v\}_{v\in \overline{\mathbb{B}}^k_1}\subset {\rm Diff}_G(M)$ of $G$-equivariant diffeomorphisms such that:
	\begin{itemize}
		\item[(i)] $F_0 = id$, $F_{-v} = F_v^{-1}$, for all $v\in \overline{\mathbb{B}}^k_1$;
		\item[(ii)] for any $V\in \overline{\mathbf{B}}_{2\epsilon}^\mF (\Sigma)$, the smooth function
		\[A^V:  \overline{\mathbb{B}}^k_1\rightarrow [0,\infty), \qquad A^V(v) = ||(F_v)_\# V||(M), \]
		has a unique maximum at $m(V)\in \mathbb{B}^k_{c_0/\sqrt{10}}$, and $-\frac{1}{c_0} {\rm Id} \leq D^2 A^V(u) \leq -c_0 {\rm Id} $ for all $u\in \overline{\mathbb{B}}^k_1$. 
	\end{itemize}
	Moreover, we say a $G$-varifold $\Sigma\in \V_n^G(M)$ is {\em $(G,k)$-unstable} if it is stationary and $(G,k)$-unstable in an $\epsilon$-neighborhood for some $\epsilon>0$. 
\end{definition}

\begin{remark}
	In the above definition, we have $m(\Sigma) = 0$ since $\Sigma$ is stationary. 
	Additionally, the function $A^{V_i}$ tends to $A^V$ in the smooth topology provided $V_i\to V$ in the $\mF$-topology (\cite[Section 2.3]{pitts2014existence}). 
	Hence, if $\Sigma$ is $(G,k)$-unstable in a $0$-neighborhood, then $\Sigma$ is $(G,k)$-unstable in an $\epsilon $-neighborhood for some $\epsilon >0$. 
\end{remark}

\begin{lemma}\label{Lem: unstable varifold}
	If $\Sigma\in\IV_n^G( M)$ is induced by a closed smooth embedded minimal $G$-hypersurface, then $\Sigma$ is $(G,k)$-unstable if and only if ${\rm Index}_G(\spt(\|\Sigma\|))\geq k$. 
\end{lemma}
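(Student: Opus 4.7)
The plan is to prove both implications by matching, at the level of quadratic forms, the Hessian of the area functional $A^V$ on $\overline{\mathbb{B}}^k_1$ with the second variation form of $L_\Sigma$ restricted to $\mathfrak{X}^{\bot,G}(\Sigma)$.

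For the easy direction \emph{$(G,k)$-unstable $\Rightarrow {\rm Index}_G(\spt\|\Sigma\|)\geq k$}, I would set $X_i := \tfrac{\partial F_v}{\partial v_i}|_{v=0}$ for $i=1,\dots,k$. Because each $F_v$ is a $G$-equivariant diffeomorphism with $F_0={\rm id}$, differentiating the identity $g\circ F_v = F_v\circ g$ at $v=0$ shows $X_i\in\mathfrak{X}^G(M)$, so the normal components $X_i^\perp|_\Sigma$ lie in $\mathfrak{X}^{\bot,G}(\Sigma)$. Applying the second variation formula at the stationary $\Sigma$,
\begin{equation*}
D^2A^\Sigma(0)(u,u)=-\int_\Sigma\langle L_\Sigma X_u^\perp, X_u^\perp\rangle,\qquad X_u:=\sum_i u_i X_i,
\end{equation*}
and using Definition~\ref{Def: unstable varifolds}(ii) at $V=\Sigma$ (so $m(\Sigma)=0$ and $D^2A^\Sigma(0)\leq -c_0\,{\rm Id}$), we see $u\mapsto X_u^\perp$ is injective (otherwise a null direction would violate strict negativity) and $-\int_\Sigma\langle L_\Sigma\cdot,\cdot\rangle$ is negative definite on the $k$-dimensional subspace $\Span\{X_i^\perp\}\subset\mathfrak{X}^{\bot,G}(\Sigma)$. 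The min-max characterization of eigenvalues of $L_\Sigma$ on $\mathfrak{X}^{\bot,G}(\Sigma)$ then forces ${\rm Index}_G(\spt\|\Sigma\|)\geq k$.

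For the converse, let $\phi_1,\dots,\phi_k\in\mathfrak{X}^{\bot,G}(\Sigma)$ be $L^2$-orthonormal Jacobi eigensections with negative eigenvalues $-\lambda_1,\dots,-\lambda_k<0$. Extend each $\phi_i$ to a compactly supported vector field on $M$ and average as in~\eqref{Eq: average vector field} to obtain $\widetilde{X}_i\in\mathfrak{X}^G(M)$ with $\widetilde{X}_i^\perp|_\Sigma=\phi_i$ (the averaging preserves the normal restriction since $\Sigma$ and $\phi_i$ are $G$-invariant). For a small parameter $\eta>0$ to be fixed, define $F_v$ to be the time-one flow of $Y_v:=\eta\sum_i v_i\widetilde{X}_i$; since $Y_v\in\mathfrak{X}^G(M)$, its flow is $G$-equivariant, and evidently $F_0={\rm id}$, $F_{-v}=F_v^{-1}$. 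At $V=\Sigma$, the stationarity gives $\nabla A^\Sigma(0)=0$ and
\begin{equation*}
D^2A^\Sigma(0)(u,u)=-\eta^2\sum_{i}\lambda_i u_i^2,
\end{equation*}
which is strictly negative definite. By smoothness of $A^\Sigma$ on $\overline{\mathbb{B}}^k_1$, choosing $\eta$ small keeps $D^2A^\Sigma$ uniformly pinched between $-\tfrac{1}{c_0}{\rm Id}$ and $-c_0{\rm Id}$ throughout $\overline{\mathbb{B}}^k_1$, and the unique maximum sits at $m(\Sigma)=0$.

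The final step is to propagate this to a full $\mF$-neighborhood, i.e.\ to verify Definition~\ref{Def: unstable varifolds}(ii) for every $V\in\overline{\mathbf{B}}^\mF_{2\epsilon}(\Sigma)$. Here I would invoke the standard smooth dependence of $V\mapsto A^V(v)=\|(F_v)_\#V\|(M)$ on $V$ in the $\mF$-topology (Pitts \cite{pitts2014existence}, Section 2.3; cf.\ the remark after Definition~\ref{Def: unstable varifolds}): the map $V\mapsto A^V\in C^2(\overline{\mathbb{B}}^k_1)$ is continuous, so the uniform negative-definiteness and the location of the maximizer persist under a sufficiently small $\mF$-perturbation. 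Taking $\epsilon>0$ small then secures both $-\tfrac{1}{c_0}{\rm Id}\leq D^2A^V\leq -c_0{\rm Id}$ on $\overline{\mathbb{B}}^k_1$ and $m(V)\in\mathbb{B}^k_{c_0/\sqrt{10}}$, completing the proof. The main technical point is the passage from the pointwise Hessian control at $v=0$ to uniform control on $\overline{\mathbb{B}}^k_1$, which is handled by the scaling parameter $\eta$; everything else is direct bookkeeping with the averaging trick and the second variation formula.
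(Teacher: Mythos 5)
Your argument is correct and matches the paper's proof essentially step for step: one direction differentiates $F_v$ at $v=0$, uses $g\circ F_v = F_v\circ g$ to see the resulting vector fields are $G$-invariant, and combines the Hessian bound in Definition~\ref{Def: unstable varifolds}(ii) with the second variation formula to produce a $k$-dimensional negative-definite subspace of $\mathfrak{X}^{\perp,G}(\Sigma)$; the converse extends $k$ $L^2$-orthonormal negative eigensections of $L_\Sigma$ to $G$-vector fields via the averaging trick~\eqref{Eq: average vector field}, takes $F_v$ to be the time-one flow of $\eta\sum_i v_i\widetilde{X}_i$, and then propagates the Hessian pinching from $v=0$ to all of $\overline{\mathbb{B}}^k_1$ by shrinking $\eta$ and to an $\mF$-neighborhood of $\Sigma$ by the $\mF$-continuity of $V\mapsto A^V$. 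The only difference is that you spell out the bookkeeping that the paper condenses into ``for $\delta>0$ sufficiently small, one easily verifies.''
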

\begin{proof}
	If ${\rm Index}_G(\spt(\|\Sigma\|))\geq k$. 
	Let $\{X_i\}_{i=1}^k\subset \mathfrak{X}^{\bot, G}(\Sigma)$ be $L_2$-orthogonal vector fields with $\delta^2\Sigma(X_i)<0$, which extend to $G$-vector fields $\{Y_i\}_{i=1}^k$ by (\ref{Eq: average vector field}). 
	Then for $\delta>0$ sufficiently small, one easily verifies that $F_v:= \phi_1^{\delta\sum_{i=1}^kv_iY_i}\in {\rm Diff}_G(M)$ and $\Sigma$ is $(G,k)$-unstable, where $v\in \overline{\mathbb{B}}^k_1$ and $\{\phi^Y_t\}$ are the diffeomorphisms generated by $Y$. 
	
	Meanwhile, if $\Sigma$ is $(G,k)$-unstable, let $\{F_v\}_{v\in \overline{\mathbb{B}}^k}\subset {\rm Diff}_G(M)$ be given by Definition \ref{Def: unstable varifolds}. 
	Then we must have $\{X_i(x) = (\frac{\partial F}{\partial v_i}(0,x))^\perp\}_{i=1}^k \subset \mathfrak{X}^{\bot, G}(\Sigma)$ are linear independent and $\delta^2\Sigma(X_i)<0$, which indicates ${\rm Index}_G(\spt(\|\Sigma\|))\geq k$. 
\end{proof}

\begin{proposition}\label{Prop: index estimate}
	Let $\{ \Sigma_i\}_{i\in\N}$ and $\Sigma_\infty$ be the min-max $(c,G)$-hypersurfaces in Theorem \ref{Thm: compactness theorem}. 
	Suppose $\sup_{i\in\N} {\rm Index}_G(\spt(\|\Sigma_i \|) )\leq k<\infty$. 
	Then ${\rm Index}_G(\spt(\|\Sigma_\infty \|))\leq k$. 
\end{proposition}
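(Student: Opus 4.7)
My plan is to argue by contradiction, transferring the $(G,k+1)$-unstability from $\Sigma_\infty$ back to $\Sigma_i$ using the $\mF$-convergence and the softness of Definition \ref{Def: unstable varifolds}. This is the equivariant analogue of the index lower semicontinuity argument in \cite{sharp2017compactness} (the same idea appears in \cite[Section 6]{marques2016morse}).

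Suppose for contradiction that $\mathrm{Index}_G(\spt(\|\Sigma_\infty\|))\geq k+1$. By Theorem \ref{Thm: regularity of varifolds with good replacements}, the support $\spt(\|\Sigma_\infty\|)$ is a closed smooth embedded minimal $G$-hypersurface, and by Theorem \ref{Thm: compactness theorem} the varifold $\Sigma_\infty$ has constant integer multiplicity on each $G$-connected component. Then Lemma \ref{Lem: unstable varifold} (applied componentwise if necessary) shows that $\Sigma_\infty$ is $(G,k+1)$-unstable. Unwinding Definition \ref{Def: unstable varifolds}, there exist $\epsilon>0$, $c_0\in(0,1)$ and a smooth family $\{F_v\}_{v\in\overline{\mathbb{B}}^{k+1}_1}\subset\mathrm{Diff}_G(M)$ with $F_0=\mathrm{id}$, $F_{-v}=F_v^{-1}$, such that for every $V\in\overline{\mathbf{B}}^\mF_{2\epsilon}(\Sigma_\infty)$ the function $A^V(v)=\|(F_v)_\#V\|(M)$ is strongly concave on $\overline{\mathbb{B}}^{k+1}_1$ with $-\tfrac{1}{c_0}\mathrm{Id}\leq D^2 A^V\leq -c_0\mathrm{Id}$ and unique maximum at some $m(V)\in\mathbb{B}^{k+1}_{c_0/\sqrt{10}}$.

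The next step is to promote this same data to a witness of $(G,k+1)$-unstability for $\Sigma_i$ with $i$ large. Since $\Sigma_i\to\Sigma_\infty$ in the varifold sense by Theorem \ref{Thm: compactness theorem}, we have $\mF(\Sigma_i,\Sigma_\infty)<\epsilon/2$ for all $i\geq i_0$. For such $i$ and any $V\in\overline{\mathbf{B}}^\mF_{\epsilon}(\Sigma_i)$, the triangle inequality gives $\mF(V,\Sigma_\infty)<\tfrac{3\epsilon}{2}<2\epsilon$, so $V\in\overline{\mathbf{B}}^\mF_{2\epsilon}(\Sigma_\infty)$. Consequently the family $\{F_v\}$ exhibits $\Sigma_i$ as $(G,k+1)$-unstable in an $\epsilon/2$-neighborhood, with the same constant $c_0$. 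Applying Lemma \ref{Lem: unstable varifold} to the smooth embedded integral $G$-varifold $\Sigma_i$ yields $\mathrm{Index}_G(\spt(\|\Sigma_i\|))\geq k+1$, contradicting $\sup_{i\in\N}\mathrm{Index}_G(\spt(\|\Sigma_i\|))\leq k$.

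The argument is essentially soft; the only points requiring care are (a) verifying that Lemma \ref{Lem: unstable varifold} applies to $\Sigma_\infty$ even though different $G$-components may carry different multiplicities (handled by taking the direct sum of $G$-equivariant variations supported on distinct components, since the index decomposes over components), and (b) confirming that the uniqueness of $m(V)\in\mathbb{B}^{k+1}_{c_0/\sqrt{10}}$ transfers automatically — which it does, because uniqueness of the maximum follows from strict concavity of $A^V$ together with the interior location of $m(V)$, and both properties are already guaranteed by the $\Sigma_\infty$-witness on the enlarged ball $\overline{\mathbf{B}}^\mF_{2\epsilon}(\Sigma_\infty)$ containing the relevant neighborhood of $\Sigma_i$. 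I do not foresee a genuine obstacle beyond bookkeeping.
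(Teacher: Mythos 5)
Your proof is correct and follows essentially the same route as the paper: deduce $(G,k+1)$-unstability of $\Sigma_\infty$ from Lemma \ref{Lem: unstable varifold}, pass it to $\Sigma_i$ for large $i$ via the $\mF$-convergence (the paper simply observes $\Sigma_i\in\overline{\mathbf{B}}^\mF_{2\epsilon}(\Sigma_\infty)$ and invokes the definition, where you make the triangle-inequality bookkeeping explicit with the $\epsilon/2$-neighborhood), then apply Lemma \ref{Lem: unstable varifold} again for the contradiction. Your extra remarks about constant multiplicities per $G$-component and the automatic transfer of the uniqueness of $m(V)$ are sound but are not points the paper needs to belabor.
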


\begin{proof}
	Suppose ${\rm Index}_G(\spt(\|\Sigma_\infty \|))\geq k +1$. 
	Then by Lemma \ref{Lem: unstable varifold}, $\Sigma_\infty$ is $(G, k+1)$-unstable in an $\epsilon$-neighborhood for some $\epsilon>0$. 
	By the varifold convergence, we have $\Sigma_i \in \overline{\mathbf{B}}_{2\epsilon}^\mF (\Sigma_\infty)$ for $i$ sufficiently large. 
	Hence, by definitions, $\Sigma_i$ is $(G, k+1)$-unstable for $i$ large enough, which contradicts the assumption and Lemma \ref{Lem: unstable varifold}. 
\end{proof}
\begin{remark}\label{Rem: index estimate}
	As in Remark \ref{Rem: various metrics}, suppose additionally that the $G$-index of $\spt(\|\Sigma_i\|)$ is given under the Riemannian metric $g_{_{M}}^i$, where $\{g_{_M}^i\}_{i\in \N}$ is a sequence of metrics with $\lim_{k\to\infty}g_{_M}^i=g_{_M}$ in the smooth topology. 
	Then we also have the estimate ${\rm Index}_G(\spt(\|\Sigma_\infty\|))\leq k$ under the metric $g_{_M}$. 
	This is because if $D^2A^V$ is strictly negative definite with respect to the metric $g_{_M}$, then it is also strictly negative definite with respect to the metric $g_{_M}^i$ for $i$ large enough. 
\end{remark}

\begin{lemma}\label{Lem: flow in parameter space}
	Let $\Sigma\in\V_n^G(M)$ be a stationary $G$-varifold that is $(G,k)$-unstable in an $\epsilon$-neighborhood. 
	Let $\{F_v\}_{v\in \overline{\mathbb{B}}^k}$ and $A^V$ be defined as in Definition \ref{Def: unstable varifolds}.  
	For any $V\in  \overline{\mathbf{B}}_{2\epsilon}^\mF (\Sigma)$, denote $\{\phi^V(\cdot, t)\}_{t\geq 0}\subset {\rm Diff}(\overline{\mathbb{B}}^k)$ as the flow generated by 
	\[u \mapsto -(1-|u|^2)\nabla A^V(u),\qquad u\in \overline{\mathbb{B}}^k.\]
	Then for any $\delta<\frac{1}{4}$, there exists a time $T=T(\delta,\epsilon,\Sigma, \{F_v\}, c_0 )\geq 0$ such that: if $V\in \overline{\mathbf{B}}_{2\epsilon}^\mF (\Sigma)$, $v\in \overline{\mathbb{B}}^k$, and $|v - m(V)|\geq \delta$, then 
	\[A^{V}\left(\phi^{V}(v, T)\right)<A^{V}(0)-\frac{c_{0}}{10}, \quad \text { and } \quad\left|\phi^{V}(v, T)\right|>\frac{c_{0}}{4}.\] 
\end{lemma}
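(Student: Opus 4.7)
The plan is to exploit the strong concavity $D^2 A^V \leq -c_0\,\mathrm{Id}$ to show that the distance $|u(t)-m(V)|$ is non-decreasing along the flow $u(t):=\phi^V(v,t)$, forcing the trajectory toward $\partial \mathbb{B}^k$ within a time that depends only on $\delta$ and $c_0$. Throughout, I use Taylor expansion of $A^V$ at the interior maximum $m(V)$ (so $\nabla A^V(m(V))=0$) together with the two-sided bound $-c_0^{-1}\mathrm{Id}\leq D^2 A^V\leq -c_0\,\mathrm{Id}$. These yield, for every $u\in \overline{\mathbb{B}}^k$, the inequalities
\[
A^V(u) \leq A^V(m(V)) - \tfrac{c_0}{2}|u-m(V)|^2, \qquad (u-m(V))\cdot \nabla A^V(u) \leq -c_0|u-m(V)|^2,
\]
and, using $|m(V)|< c_0/\sqrt{10}$, the control $A^V(m(V))-A^V(0) \leq \tfrac{|m(V)|^2}{2c_0} \leq \tfrac{c_0}{20}$.

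A direct computation from $\dot u = -(1-|u|^2)\nabla A^V(u)$ gives two monotonicity identities: $\tfrac{d}{dt}A^V(u(t)) = -(1-|u|^2)|\nabla A^V(u)|^2 \leq 0$, and
\[
\tfrac{d}{dt}|u(t)-m(V)|^2 = -2(1-|u|^2)(u-m(V))\cdot\nabla A^V(u) \geq 2c_0(1-|u|^2)|u-m(V)|^2 \geq 0.
\]
In particular, $|u(t)-m(V)|$ is non-decreasing and bounded above by $1+|m(V)|\leq 1+c_0/\sqrt{10}$, while $A^V(u(t))$ is non-increasing.

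Fix $\eta=\eta(c_0)\in(0,1)$ small enough that $\eta < 1-c_0/4 - 2c_0/\sqrt{10}$ and $\eta < 1 - \sqrt{3/10} - c_0/\sqrt{10}$; both right-hand sides are positive because $c_0<1$. Then set $T := \frac{1}{c_0\eta}\log\frac{1+c_0/\sqrt{10}}{\delta} + 1$. If $|u(t)|\leq 1-\eta$ for all $t\in[0,T]$, then $1-|u|^2\geq \eta$, so Gronwall applied to $\tfrac{d}{dt}|u-m(V)|^2 \geq 2c_0\eta|u-m(V)|^2$ gives $|u(T)-m(V)|^2 \geq \delta^2 e^{2c_0\eta T} > (1+c_0/\sqrt{10})^2$, contradicting the a priori bound. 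Hence some $t_0\in[0,T]$ satisfies $|u(t_0)|>1-\eta$. By monotonicity of $|u-m(V)|$, $|u(T)-m(V)| \geq 1-\eta - c_0/\sqrt{10}$, so $|u(T)|\geq 1-\eta -2c_0/\sqrt{10} > c_0/4$. Combining $A^V(u(T))\leq A^V(u(t_0)) \leq A^V(m(V)) - \tfrac{c_0}{2}(1-\eta-c_0/\sqrt{10})^2$ with $A^V(m(V)) - A^V(0)\leq c_0/20$ gives
\[
A^V(u(T)) \leq A^V(0) + \tfrac{c_0}{20} - \tfrac{c_0}{2}(1-\eta - c_0/\sqrt{10})^2 < A^V(0) - \tfrac{c_0}{10},
\]
the last strict inequality being $(1-\eta-c_0/\sqrt{10})^2 > 3/10$, which is built into the choice of $\eta$.

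The main obstacle is that $|u(t)|$ itself need not be monotone: the boundary factor $(1-|u|^2)$ can cause $|u|$ to oscillate, so one cannot argue directly that $|u(T)|$ is close to $1$. The workaround is to route all conclusions through the genuinely monotone quantity $|u-m(V)|$, which by the concavity estimates simultaneously controls the location of $u(T)$ and the value of $A^V(u(T))$.
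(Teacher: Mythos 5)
Your proof is correct, and it is essentially the argument that Marques--Neves give for their Lemma~4.5, which is all the paper does here (the paper's ``proof'' is just the citation). The two monotone quantities $t\mapsto A^V(\phi^V(v,t))$ and $t\mapsto|\phi^V(v,t)-m(V)|$, derived from $D^2A^V\leq-c_0\,\mathrm{Id}$ via the identities $\tfrac{d}{dt}A^V(u)=-(1-|u|^2)|\nabla A^V(u)|^2$ and $\tfrac{d}{dt}|u-m(V)|^2\geq 2c_0(1-|u|^2)|u-m(V)|^2$, together with the Taylor bounds $A^V(u)\leq A^V(m(V))-\tfrac{c_0}{2}|u-m(V)|^2$ and $A^V(m(V))-A^V(0)\leq\tfrac{c_0}{20}$, are exactly the inputs used there. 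Your observation that one must track $|u-m(V)|$ rather than $|u|$ (since $m(V)\neq 0$ in general) is the right point, and the dichotomy ``either $|u|>1-\eta$ at some $t_0\leq T$, or Gronwall forces $|u(T)-m(V)|$ past the a priori bound $1+c_0/\sqrt{10}$'' is a clean way to extract a uniform $T$; Marques--Neves phrase the escape-time estimate slightly differently (bounding the decay of $A^V$ linearly from $|\nabla A^V|\geq c_0\delta$ rather than the growth of $|u-m(V)|$ exponentially), but this is a cosmetic variation of the same idea. Two small things you should state explicitly but which are easily supplied: (a) the vector field $u\mapsto-(1-|u|^2)\nabla A^V(u)$ is smooth, bounded, and vanishes on $\partial\overline{\mathbb{B}}^k$, so the flow exists for all $t\geq 0$ and preserves $\overline{\mathbb{B}}^k$; and (b) when $|v|=1$ the flow is constant, which your argument covers with $t_0=0$ since $|v|=1>1-\eta$.
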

\begin{proof}
	The proof is the same as \cite[Lemma 4.5]{marques2016morse}. 
\end{proof}

Now, we show the following deformation theorem in the equivariant setting, which generalizes the Deformation Theorem A in \cite{marques2016morse}. 
Roughly speaking, we can use the following deformation theorem to generate a new sequence of maps that is homotopic to $\{\Phi_i\}_{i\in\N}$ and far away from a minimal $G$-hypersurface with a large $G$-index. 
\begin{theorem}[Deformation Theorem]\label{Thm: deformation theorem}
	Let $\{\Phi_i\}_{i\in\N}$ be a sequence of continuous maps from $X$ to $\Z_n^G(M;\mF;\mZ_2 )$, where $X$ is a cubical complex of dimension $k$. 
	Denote 
	\[
		L=\mathbf{L}(\{\Phi_i\}_{i\in\N} ) := \limsup_{i\to\infty} \sup_{x\in X}\M(\Phi_i(x)). 
	\]
	Suppose 
	\begin{itemize}
		\item[(1)] $\Sigma\in\V_n^G(M)$ is stationary and $(G, k+1)$-unstable;
		\item[(2)] $\mathcal{K}\subset \V_n(M)$ is a subset with $\mF( \Sigma, \mathcal{K})>0$ and $\mF(|\Phi_i|(X), \mathcal{K}) >0$ for all $i\geq i_0$;
		\item[(3)] $||\Sigma||(M) = L$.
	\end{itemize}
	Then there exist $\bar{\epsilon}>0$, $j_0\in\N$, and a sequence $\{\Psi_i\}_{i\in\N}$ of maps from $X$ to $\Z_n^G(M;\mF;\mZ_2 )$, such that
	\begin{itemize}
		\item[(i)] $\Psi_i$ is homotopic to $\Phi_i$ in $\Z_n^G(M;\mF;\mZ_2)$ for all $i\in\N$;
		\item[(ii)] $\mathbf{L}(\{\Psi_i\}_{i\in\N})\leq L$;
		\item[(iii)] $\mF(|\Psi_i|(X),  ~\overline{\mathbf{B}}^\mF_{\bar{\epsilon}}(\Sigma)\cup \mathcal{K} ) > 0$ for all $i\geq j_0$. 
	\end{itemize}
\end{theorem}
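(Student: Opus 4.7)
The strategy follows the Deformation Theorem A of \cite{marques2016morse}, adapted to the equivariant setting. Write $k = \dim X$; the $(G, k+1)$-unstable hypothesis supplies an $\epsilon > 0$, a $G$-equivariant family $\{F_v\}_{v \in \overline{\mathbb{B}}^{k+1}_1} \subset {\rm Diff}_G(M)$, and the functional $A^V(v) = \|(F_v)_\# V\|(M)$ whose unique maximum $m(V) \in \mathbb{B}^{k+1}_{c_0/\sqrt{10}}$ depends continuously on $V \in \overline{\mathbf{B}}^\mF_{2\epsilon}(\Sigma)$. The crucial dimensional gap $\dim X < \dim \overline{\mathbb{B}}^{k+1}$ provides one extra direction in parameter space to steer around the maximum locus, and equivariance of $\{F_v\}$ ensures $(F_v)_\# \Phi_i(x) \in \Z_n^G(M;\mZ_2)$ automatically, so the construction stays $G$-invariant throughout.

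Choose $\bar\epsilon \in (0, \epsilon)$ and $\eta < c_0/20$ so small that, after possibly rescaling the family to $\{F_{\lambda v}\}$ for small $\lambda > 0$, every varifold in the compact set $\{(F_v)_\# V : v \in \overline{\mathbb{B}}^{k+1}, V \in \overline{\mathbf{B}}^\mF_{4\bar\epsilon}(\Sigma)\}$ lies in $\overline{\mathbf{B}}^\mF_\epsilon(\Sigma) \setminus \mathcal{K}$; this is possible because $\mF(\Sigma,\mathcal{K}) > 0$ and $\{F_v\}$ is $\mF$-continuous. Pick $j_0 \geq i_0$ so that $\sup_x \M(\Phi_i(x)) \leq L + \eta$ for $i \geq j_0$. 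Take a smooth cutoff $\chi:[0,\infty)\to[0,1]$ with $\chi \equiv 1$ on $[0,\bar\epsilon]$ and $\chi \equiv 0$ on $[3\bar\epsilon,\infty)$, and set $\rho_i(x) := \chi(\mF(\Phi_i(x), \Sigma))$. Since $x \mapsto m(\Phi_i(x))$ has image of topological dimension at most $k$ inside the $(k+1)$-ball, a cellular/general-position construction yields a continuous selector $v_0 : X \to \overline{\mathbb{B}}^{k+1}$ with $v_0 \equiv 0$ on $\{\rho_i = 0\}$ and $|v_0(x) - m(\Phi_i(x))| \geq \delta$ uniformly on $\{\rho_i = 1\}$ for some $\delta > 0$. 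With this $\delta$ fixed, Lemma \ref{Lem: flow in parameter space} supplies a time $T > 0$ and we set
\begin{equation*}
    w_i(x) := \begin{cases} \phi^{\Phi_i(x)}\!\bigl(v_0(x), \rho_i(x)\,T\bigr), & \rho_i(x) > 0,\\ 0, & \rho_i(x) = 0,\end{cases} \qquad \Psi_i(x) := (F_{w_i(x)})_\# \Phi_i(x),
\end{equation*}
which extends continuously across $\{\rho_i = 0\}$. The $\mF$-continuous homotopy $H_s(x) := (F_{sw_i(x)})_\# \Phi_i(x)$, $s \in [0,1]$, connects $\Phi_i$ to $\Psi_i$ and yields (i).

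For (ii) and (iii): on $\{\rho_i = 1\}$, Lemma \ref{Lem: flow in parameter space} gives $\M(\Psi_i(x)) = A^{\Phi_i(x)}(w_i(x)) < A^{\Phi_i(x)}(0) - c_0/10 \leq L - c_0/20$, and since the mass is $\mF$-continuous on mass-bounded subsets with $\|\Sigma\|(M) = L$, this mass gap forces $\mF(\Psi_i(x), \Sigma) \geq \bar\epsilon$; moreover $\Psi_i(x) \notin \mathcal{K}$ by the choice of $\bar\epsilon$. On $\{\rho_i = 0\}$, $\Psi_i(x) = \Phi_i(x)$ sits at $\mF$-distance $\geq 3\bar\epsilon$ from $\Sigma$ and lies outside $\mathcal{K}$ by hypothesis (2). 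The intermediate region $\{0 < \rho_i < 1\}$ is handled by continuity of both estimates in $\rho_i$, shrinking $\bar\epsilon$ if necessary. Property (ii) then follows from $\sup_x \M(\Psi_i(x)) \leq L + \eta$ together with $\eta \to 0$. The principal technical obstacle is the continuous construction of the selector $v_0$ satisfying the transversality and boundary-vanishing conditions simultaneously, together with managing the initial rescaling so that the entire orbit $\{(F_v)_\# V\}$ avoids $\mathcal{K}$; the equivariance itself demands no extra work beyond using the equivariant family $\{F_v\}$ throughout.
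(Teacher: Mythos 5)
Your proposal captures the overall shape of the argument, but the construction of the selector $v_0$ is missing a crucial constraint, and this breaks both (ii) and (iii) in the intermediate region $\{0<\rho_i<1\}$.

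The decisive issue is that you only constrain $v_0$ at the two extremes: $v_0\equiv 0$ on $\{\rho_i=0\}$ and $|v_0 - m(\Phi_i(x))|\ge\delta$ on $\{\rho_i=1\}$. On the transition region nothing prevents $v_0(x)$ from approaching $m(\Phi_i(x))$, where $A^{\Phi_i(x)}$ attains its maximum. Since $m$ is the \emph{unique maximum} and $-\frac1{c_0}\mathrm{Id}\le D^2A^V\le -c_0\mathrm{Id}$ with $|m(V)|< c_0/\sqrt{10}$, you have $A^{\Phi_i(x)}(v_0(x))$ possibly as large as $A^{\Phi_i(x)}(0)+\frac{c_0}{20}=\M(\Phi_i(x))+\frac{c_0}{20}$. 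Because the gradient flow only decreases $A$, and the flow time $\rho_i(x)T$ can be arbitrarily short on the intermediate band, $\M(\Psi_i(x))=A^{\Phi_i(x)}(w_i(x))$ need not drop back below $\M(\Phi_i(x))$. The excess $\frac{c_0}{20}$ is a fixed constant that does not go to zero with $i$, so $\mathbf{L}(\{\Psi_i\})$ can exceed $L$ by a fixed amount, contradicting (ii). Similarly, for (iii) on the intermediate band you implicitly need $\mF(\Psi_i(x),\Phi_i(x))$ small compared with $\bar\epsilon$, but this requires $|w_i(x)|$ small; since $v_0(x)$ is allowed to be anywhere in $\overline{\mathbb{B}}^{k+1}_1$ there, $\mF\bigl((F_{w_i(x)})_\#\Phi_i(x),\Phi_i(x)\bigr)$ is uncontrolled and your distance bound does not follow. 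The phrase ``continuity of both estimates in $\rho_i$'' does not fill this hole: the two estimates are of different natures, neither is monotone or uniformly continuous in $\rho_i$, and the bad configuration $v_0(x)\approx m(\Phi_i(x))$ occurs precisely in the transition zone.

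What the paper (following Marques--Neves) does differently is essential: it builds a \emph{homotopy} $\hat H_i : U_{i,2\epsilon}\times[0,1]\to\mathbb{B}^{k+1}_{2^{-i}}(0)$ valued in a ball of radius $2^{-i}\to 0$, with $\hat H_i(\cdot,0)=0$ and $\hat H_i(\cdot,1)$ a uniform distance $\eta_i$ from $m_i$. The cutoff $c$ is then applied \emph{both} to the homotopy parameter, giving the starting point $H_i(x)=\hat H_i(x,c)$ with $A^{\Phi_i(x)}(H_i(x))\le A^{\Phi_i(x)}(0)+O(2^{-i})$, and to the flow time $T_i\cdot c$. The $O(2^{-i})$ error is what makes (ii) go through. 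Without the tiny-ball constraint on the starting point your construction cannot reproduce this. A secondary caveat: naively rescaling to $\{F_{\lambda v}\}$ does not preserve Definition \ref{Def: unstable varifolds}, since the unique maximum moves to $m(V)/\lambda$, which for fixed $V\neq\Sigma$ escapes both the domain and the required ball $\mathbb{B}^{k+1}_{c_0/\sqrt{10}}$; the needed smallness of the family has to be arranged by revisiting the eigenvector construction (as in Lemma \ref{Lem: unstable varifold}) together with shrinking $\epsilon$, not by scalar rescaling of $v$.
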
	

\begin{proof}
	We follow the arguments in \cite[Theorem 5.1]{marques2016morse} and sketch the main steps for the sake of completeness. 
	First, set $d:=  \mF( \Sigma, \mathcal{K})>0$. 
	Since $\mF( \Sigma, \mathcal{K})>0$ and $\Sigma$ is $(G,k+1)$-unstable, let $\epsilon>0$ and $\{F_v\}_{v\in \overline{\mathbb{B}}^{k+1}}\subset {\rm Diff}_G(M)$ be given in Definition \ref{Def: unstable varifolds} corresponding to $\Sigma$ so that 
	\begin{equation}\label{Eq: away from K}
		\mF(\left(F_{v}\right)_{\#} V, \mathcal{K})>\frac{d}{2}, \qquad\forall V\in \overline{\mathbf{B}}_{2\epsilon}^\mF (\Sigma), ~v\in \overline{\mathbb{B}}^{k+1}.
	\end{equation} 
	Then for every $i\in\N$, we can take a sufficiently fine subdivision $X(k_i)$ of $X$ so that 
	\[ \mF( |\Phi_i(x)|, |\Phi_i(y)| ) < \delta_i, \quad\mbox{for all $x,y$ in a same cell of $X(k_i)$},\]
	where $\delta_i := \min \{2^{-(i+k+2)}, \epsilon/4 \}$. 
	
	For any $\eta>0$, denote by $U_{i,\eta}$ the subcomplex of $X(k_i)$ formed by the cells $\sigma\in X(k_i)$ with $\mF(|\Phi_i(x)|, \Sigma)<\eta$ for all $x\in \sigma$. 
	Therefore, if $\beta\notin U_{i,\eta}$ and $y\in \beta$, then 
	\begin{equation}\label{Eq: far away from Sigma}
		\mF(|\Phi_i(y)|, \Sigma)\geq \eta - \delta_i.
	\end{equation}
	For any $x\in X(k_i)$, denote $A_i^x := A^{|\Phi_i(x)|} $, $m_i(x) := m(|\Phi_i(x)|)$, and $ \phi_i^x := \phi^{|\Phi_i(x)|	}$, where $A,m,\phi$ are defined in Definition \ref{Def: unstable varifolds} and Lemma \ref{Lem: flow in parameter space}. 
	It now follows from the construction in \cite[Theorem 5.1]{marques2016morse} that there is a homotopy map $\hat{H}_i: U_{i,2\epsilon}\times [0,1]\to \mathbb{B}^{k+1}_{2^{-i}}(0)$ so that $\hat{H}_i(x,0) = 0$ for all $x\in U_{i,2\epsilon}$, and 
	\begin{equation}\label{Eq: homotopy away from m}
		\inf _{x \in U_{i, 2 \epsilon}} |m_{i}(x)-\hat{H}_{i}(x, 1) | \geq \eta_{i}>0 \quad \mbox{for some } \eta_{i}>0. 
	\end{equation}
	Let $c: [0,\infty)\to [0,1]$ be a non-increasing cut-off function so that $c=1$ in $ [0, \frac{3\epsilon}{2} ]$ and $c=0$ in $[\frac{7\epsilon}{4},\infty)$. 
	Because $\delta_i\leq \frac{\epsilon}{4}$ and (\ref{Eq: far away from Sigma}), if $y\notin U_{i,2\epsilon}$, then $\mF(|\Phi_i(y)|, \Sigma)\geq\frac{7\epsilon}{4}$. 
	Hence, $c(\mF(|\Phi_i(y)|, \Sigma)) =0 $ provided $y\notin U_{i,2\epsilon}$. 
	
	Now we define a continuous homotopy map $H_i: X\times [0,1] \to \mathbb{B}^{k+1}_{2^{-i}}(0)$ by 
	\begin{equation*}
		H_i(x,t) := \left\{\begin{array}{ll}{ \hat{H}_i(x, ~t\cdot c(\mF(|\Phi_i(x)|, \Sigma))) ,} & {x\in U_{i,2\epsilon}} \\ {0, }&{x \notin U_{i,2\epsilon}}\end{array}\right. .
	\end{equation*}
	For $\eta_i$ in (\ref{Eq: homotopy away from m}), let $T_i=T_i(\eta_i ,\epsilon,\Sigma, \{F_v\}, c_0 )$ be given by Lemma \ref{Lem: flow in parameter space}.  
	Then we have a continuous map $D_i: X\to \overline{\mathbb{B}}^{k+1}$, $D_i(x):= \phi_i^x \big(H_i(x), T_i\cdot c(\mF(|\Phi_i(x)|, \Sigma)) \big)$, so that $D_i(x) =0 $ if $x\notin U_{i,2\epsilon}$. 
	Define then 
	\[ \Psi_i:X\to \Z_n^G(M; \mF; \mZ_2), \quad \Psi_i(x) := (F_{D_i(x)})_\#(\Phi_i(x)) .\]
	For each $i\in\N$, it follows from the constructions that $\Psi_i$ is homotopic to $\Phi_i$ in $\Z_n^G(M; \mF; \mZ_2)$ and $\Psi_i(x) = \Phi_i(x) $ if $ x\notin U_{i, 2\epsilon}$. 
	One can also follow the proof in \cite[Theorem 5.1]{marques2016morse} to verify (ii) and (iii). 
\end{proof}

Combining the Deformation Theorem \ref{Thm: deformation theorem} and the Compactness Theorem \ref{Thm: compactness theorem}, we can now show our main theorem of $G$-index estimates. 

\begin{theorem}\label{Thm: main theorem}
	Let $(M^{n+1}, g_{_M})$ be a closed Riemannian manifold with a compact Lie group $G$ acting as isometries so that $3\leq {\rm codim}(G\cdot p)\leq 7$ for all $ p\in M$. 
	For any $k$-dimensional cubical complex $X$, let ${\bm \Pi}\in \big[ X^k, \Z_{n}^G(M;\mF;\mZ_2)\big]$ be a $G$-homotopy class. 
	Then there is a min-max $((3^k)^{3^k},G)$-hypersurface $\Sigma\in\IV^G_n(M)$ so that:
	\begin{itemize}
		\item[(i)] $\spt(\|\Sigma\|)$ is a closed smooth embedded minimal $G$-hypersurface in $M$;
		\item[(ii)] $\mathbf{L}({\bm \Pi}) = ||\Sigma||(M) $;
		\item[(iii)] ${\rm Index}_G(\spt(\|\Sigma\|))\leq k$.
	\end{itemize}
	Moreover, if the metric $g_{_M}$ is $G$-bumpy, then for any min-max sequence $\{\Phi_i\}_{i\in\N}\subset{\bm \Pi}$, we can further take $\Sigma \in  \mathbf{C}(\{\Phi_i\}_{i\in\N} )$. 
\end{theorem}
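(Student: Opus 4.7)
My plan is to deduce the theorem from the equivariant min-max existence result (Theorem \ref{Thm: min-max}) combined with an iterated application of the Deformation Theorem \ref{Thm: deformation theorem}, following the strategy of \cite{marques2016morse}. Given any pulled-tight min-max sequence $\{\Phi_i\}_{i\in\N}\subset{\bm \Pi}$ (Proposition \ref{Prop:pulltight}), set $L=\mathbf{L}({\bm \Pi})$ and $c=(3^k)^{3^k}$. Theorem \ref{Thm: min-max} already produces some $\Sigma\in\mathbf{C}(\{\Phi_i\})$ satisfying (i) and (ii), so the work lies in the $G$-index bound (iii), which I intend to establish by contradiction: assume every min-max $(c,G)$-hypersurface $V\in\V_n^G(M)$ with $\|V\|(M)=L$ satisfies ${\rm Index}_G(\spt(\|V\|))\geq k+1$. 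Collect all such $V$ in a set $\mathcal{W}({\bm \Pi})$; by Theorem \ref{Thm: compactness theorem} this set is $\mF$-compact, since varifold limits of min-max $(c,G)$-hypersurfaces are again such and the mass condition is closed. Lemma \ref{Lem: unstable varifold} renders every $V\in\mathcal{W}({\bm \Pi})$ $(G,k+1)$-unstable in some $\epsilon_V$-neighborhood, and Theorem \ref{Thm: deformation theorem} then supplies an associated $\bar{\epsilon}_V>0$; compactness extracts a finite subcover $\{\mathbf{B}^{\mF}_{\bar{\epsilon}_j/2}(V_j)\}_{j=1}^{N}$ of $\mathcal{W}({\bm \Pi})$.

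Next I iterate the deformation. Starting from $\{\Phi^{(0)}_i\}:=\{\Phi_i\}$, at step $j$ I apply Theorem \ref{Thm: deformation theorem} with $\Sigma=V_j$ and $\mathcal{K}=\bigcup_{l<j}\overline{\mathbf{B}}^{\mF}_{\bar{\epsilon}_l/2}(V_l)$ to obtain $\{\Phi^{(j)}_i\}$, homotopic to $\{\Phi^{(j-1)}_i\}$, with $\mathbf{L}(\{\Phi^{(j)}_i\})\leq L$ and image $\mF$-separated from $\bigcup_{l\leq j}\overline{\mathbf{B}}^{\mF}_{\bar{\epsilon}_l/2}(V_l)$. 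After $N$ steps the resulting sequence $\{\Psi_i\}:=\{\Phi^{(N)}_i\}$ is a min-max sequence in ${\bm \Pi}$ whose image is uniformly $\mF$-separated from $\mathcal{W}({\bm \Pi})$. Feeding $\{\Psi_i\}$ into Theorem \ref{Thm: min-max} returns some $V^{*}\in\mathbf{C}(\{\Psi_i\})$ of mass $L$ that is a min-max $(c,G)$-hypersurface, hence $V^{*}\in\mathcal{W}({\bm \Pi})$. But $V^{*}$ is an $\mF$-limit of image points of $\{\Psi_i\}$, which contradicts the $\mF$-separation and proves (iii).

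For the bumpy refinement, Corollary \ref{Cor: generic finiteness} makes $\mathcal{W}({\bm \Pi})$ finite; given a prescribed $\{\Phi_i\}$, the subset $\mathcal{W}_\Phi:=\mathbf{C}(\{\Phi_i\})\cap\mathcal{W}({\bm \Pi})$ is finite and nonempty (by Theorem \ref{Thm: min-max}). Assuming every element of $\mathcal{W}_\Phi$ has $G$-index $\geq k+1$, I run the same iterated deformation with $\{V_j\}=\mathcal{W}_\Phi$, and verify the key inclusion $\mathbf{C}(\{\Psi_i\})\subset\mathbf{C}(\{\Phi_i\})$: the cutoff built into Theorem \ref{Thm: deformation theorem} forces $F_{D_i(x)}$ to be the identity whenever $|\Phi_i(x)|$ lies outside an $\mF$-neighborhood of every $V_j$, while Lemma \ref{Lem: flow in parameter space} produces a strict mass drop inside. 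Hence any $V^{*}\in\mathbf{C}(\{\Psi_i\})$ with $\|V^{*}\|(M)=L$ must arise from points where $\Psi_i=\Phi_i$, so $V^{*}\in\mathbf{C}(\{\Phi_i\})\cap\mathcal{W}({\bm \Pi})=\mathcal{W}_\Phi$, and the $\mF$-avoidance again gives a contradiction.

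The main obstacle I foresee is the bookkeeping in the iterated deformation: the constants $\bar{\epsilon}_j$ delivered by each application of Theorem \ref{Thm: deformation theorem} depend on the incumbent $\mathcal{K}$ and on $V_j$, so their compatibility with the fixed finite cover of $\mathcal{W}({\bm \Pi})$, together with preservation of the continuity, $G$-homotopy class, and mass bounds required as input for the next step, must be tracked carefully (possibly by first fixing a cover and then shrinking $\bar{\epsilon}_j$ if necessary). The inclusion $\mathbf{C}(\{\Psi_i\})\subset\mathbf{C}(\{\Phi_i\})$ used in the bumpy step is a further subtlety; its justification rests on the strict mass drop from Lemma \ref{Lem: flow in parameter space} combined with the cutoff structure in the proof of Theorem \ref{Thm: deformation theorem}, and verifying this cleanly for intermediate cells where the cutoff takes values in $(0,1)$ will be the most delicate point.
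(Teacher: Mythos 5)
The high-level strategy (iterated application of the Deformation Theorem to eliminate high-index limits, then invoke Theorem \ref{Thm: min-max}) is the same as the paper's, but you handle the two key steps differently from the paper and both differences introduce genuine gaps.

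First, you try to run the iterated deformation directly in the non-bumpy case by observing that $\mathcal{W}({\bm \Pi})$ is $\mF$-compact and extracting a finite subcover $\{\mathbf{B}^\mF_{\bar\epsilon_j/2}(V_j)\}$. The problem is that the radius $\bar\epsilon$ produced by Theorem \ref{Thm: deformation theorem} is an \emph{output} that depends on the input set $\mathcal{K}$ (via the requirement that $(F_v)_\# V$ stay away from $\mathcal{K}$ for all $V$ in a $2\epsilon$-ball around $\Sigma$; see the shrinking of $\epsilon$ built into (\ref{Eq: away from K})). When you enlarge $\mathcal{K}$ at each iteration step, the $\bar\epsilon$ the theorem actually returns at step $j$ can be strictly smaller than the baseline $\bar\epsilon_j$ used to form the cover, so there is no guarantee that the union of the balls you succeed in avoiding actually covers $\mathcal{W}({\bm \Pi})$. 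There is also the secondary issue that $V_j$ may already lie in the current $\mathcal{K}$, violating the hypothesis $\mF(\Sigma,\mathcal{K})>0$; the paper handles this by skipping, but under skipping the covering property again becomes nontrivial. The paper sidesteps all of this: it \emph{first} proves the theorem when $g_{_M}$ is $G$-bumpy, where Corollary \ref{Cor: generic finiteness} makes the relevant sets genuinely finite so the iteration terminates and the bookkeeping is transparent (ordering the $\Sigma_j$ and skipping those already covered), and then obtains the general case by perturbing $g_{_M}$ to a sequence of $G$-bumpy metrics $g_{_M}^j\to g_{_M}$ and passing to the limit via Theorem \ref{Thm: compactness theorem}, Remark \ref{Rem: various metrics}, and Remark \ref{Rem: index estimate}. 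Your compactness argument would need a careful ordering of radii (or a different mechanism) to close the gap; as written it does not.

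Second, for the ``moreover'' clause you assert the inclusion $\mathbf{C}(\{\Psi_i\})\subset\mathbf{C}(\{\Phi_i\})$, arguing that the cutoff in the Deformation Theorem makes $F_{D_i(x)}=\mathrm{id}$ outside a neighborhood and that Lemma \ref{Lem: flow in parameter space} gives a strict mass drop inside. This is exactly the delicate point you flag, and it does not hold as stated: the strict mass drop of Lemma \ref{Lem: flow in parameter space} is guaranteed only after flowing for the full time $T$, whereas in the transition annulus where $c(\mF(|\Phi_i(x)|,\Sigma))\in(0,1)$ the flow time is $T_i\cdot c<T_i$ and the mass need not drop below $L$; so critical points of $\{\Psi_i\}$ can arise from such $x$ even though $\Psi_i(x)\neq\Phi_i(x)$. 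The paper's proof does \emph{not} claim this inclusion. Instead it introduces the auxiliary set $\mathcal{W}(r)=\{V\in\mathcal{W}_G:\mF(V,\mathbf{C}(\{\Phi_i\}))\geq r\}$, proves $\mF(|\Phi_i|(X),\mathcal{W}(r))>\epsilon_0$, applies the deformation to $\mathcal{W}_G^{k+1}\setminus\overline{\mathbf{B}}^\mF_{\epsilon_0}(\mathcal{W}(r))$ (so that the resulting $\Sigma_r$ satisfies $\mF(\Sigma_r,\mathbf{C}(\{\Phi_i\}))<r$ and has low $G$-index), and then sends $r\to0$, using the finiteness of $\mathcal{W}_G$ to conclude $\Sigma_{r_i}\equiv\Sigma\in\mathbf{C}(\{\Phi_i\})$ eventually. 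You would need to adopt something like this $r$-parametrized argument (or otherwise rule out new critical points appearing in the transition annulus) to justify the ``moreover'' part.
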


\begin{proof}
	The proof is divided into two cases depending on whether the metric $g_{_M}$ is $G$-bumpy. 
	For simplicity, let $c:=(3^k)^{3^k}$. 
	
	{\bf Case 1: $g_{_M}$ is $G$-bumpy.} 
	Define $\mathcal{W}_G$ to be the set of all integral stationary $G$-varifolds $V\in\IV_n^G(M)$ such that  
	\[
		\mbox{$||V||(M) = \mathbf{L}({\bm \Pi}) $ and $V$ is a min-max $(c,G)$-hypersurface (with multiplicity)}. 
	\]
	Let $\mathcal{W}_G^{k+1}:=\{V\in \mathcal{W}_G: {\rm Index}_G(\spt(\|V\|)) \geq k+1 \}$ be a subset of $\mathcal{W}_G$. 
	By the $G$-bumpy property of $g_{_M}$ and Corollary \ref{Cor: generic finiteness}, $\mathcal{W}_G$ and $\mathcal{W}_G^{k+1}$ are both finite sets.

	For any min-max sequence $\{\Phi_i\}_{i\in\N}\subset {\bm \Pi}$ and $r>0$, denote 
	\[
		\mathcal{W}(r):= \{V\in \mathcal{W}_G : \mF(V, \mathbf{C}(\{\Phi_i\}_{i\in\N}) ) \geq r \}.
	\]
	Then we claim that $\mF(|\Phi_i|(X), \mathcal{W}(r)) > \epsilon_0$ for some $\epsilon_0>0$ and all $i$ sufficiently large. 
	Otherwise, suppose there exist $x_{i_j}\in X$ and $V_j\in \mathcal{W}(r) $ with $ \mF(|\Phi_{i_j}(x_{i_j})|, V_j) \leq \frac{1}{j}$ for all $j=1,2,\dots$. 
	Then we have $\lim_{j\to\infty}||\Phi_{i_j}(x_{i_j})||(M) = \mathbf{L}({\bm \Pi})$, and thus $\lim_{j\to\infty} V_j = \lim_{j\to\infty}|\Phi_{i_j}(x_{i_j})| \in  \mathbf{C}(\{\Phi_i\}_{i\in\N})$ up to a subsequence, which is a contradiction. 
	
	Moreover, since $\mathcal{W}_G^{k+1}$ is finite (Corollary \ref{Cor: generic finiteness}), we can write $\mathcal{W}_G^{k+1} \setminus \overline{\mathbf{B}}_{\epsilon_0}^\mF ( \mathcal{W}(r) ) = \{\Sigma_1,\Sigma_2,\dots \Sigma_J\}$. 
	By Lemma \ref{Lem: unstable varifold}, each $\Sigma_j$ is $(G,k+1)$-unstable. 
	Therefore, we can apply the deformation theorem (Theorem \ref{Thm: deformation theorem}) repeatedly to eliminate every $\Sigma_j$. 
	
	Specifically, by applying Theorem \ref{Thm: deformation theorem} with $\mathcal{K} = \overline{\mathbf{B}}_{\epsilon_0}^\mF ( \mathcal{W}(r) )$ and $\Sigma = \Sigma_1 $, we obtain $\epsilon_1>0$, $i_1\in\N$, and a sequence $\{\Phi_i^1\}_{i\in\N}$ of maps from $X$ to $\Z_n^G(M;\mF;\mZ_2 )$, such that
	\begin{itemize}
		\item $\Phi_i^1$ is homotopic to $\Phi_i$ in $\Z_n^G(M;\mF;\mZ_2)$ for all $i\in\N$;
		\item $\mathbf{L}(\{\Phi_i^1\}_{i\in\N})\leq L$;
		\item $\mF\big(|\Phi_i^1|(X),  ~\overline{\mathbf{B}}^\mF_{\epsilon_1}(\Sigma_1) \cup  \overline{\mathbf{B}}_{\epsilon_0}^\mF ( \mathcal{W}(r) ) \big) > 0$ for all $i\geq i_1$; 
		\item no $\Sigma_j$ belongs to $\partial \overline{\mathbf{B}}^\mF_{\epsilon_1}(\Sigma_1) $,
	\end{itemize}
	where $L :=\mathbf{L}(\{\Phi_i\}_{i\in\N} ) = {\bf L}({\bm \Pi})$. 
	
	Now, if $\Sigma_2\notin \overline{\mathbf{B}}^\mF_{\epsilon_1}(\Sigma_1)$, we can apply Theorem \ref{Thm: deformation theorem} with $\mathcal{K} = \overline{\mathbf{B}}^\mF_{\epsilon_1}(\Sigma_1) \cup  \overline{\mathbf{B}}_{\epsilon_0}^\mF ( \mathcal{W}(r) ) $ and $\Sigma = \Sigma_2 $ to get $\epsilon_2>0$, $i_2\in\N$, and a sequence $\{\Phi_i^2\}_{i\in\N}$, such that
	\begin{itemize}
		\item $\Phi_i^2$ is homotopic to $\Phi_i$ in $\Z_n^G(M;\mF;\mZ_2)$ for all $i\in\N$;
		\item $\mathbf{L}(\{\Phi_i^2\}_{i\in\N})\leq L$;
		\item $\mF \big( |\Phi_i^2|(X),  ~\overline{\mathbf{B}}^\mF_{\epsilon_2}(\Sigma_2) \cup \overline{\mathbf{B}}^\mF_{\epsilon_1}(\Sigma_1) \cup  \overline{\mathbf{B}}_{\epsilon_0}^\mF ( \mathcal{W}(r) ) \big) > 0$ for all $i\geq i_2$; 
		\item no $\Sigma_j$ belongs to $\partial \overline{\mathbf{B}}^\mF_{\epsilon_2}(\Sigma_2)  \cup \partial \overline{\mathbf{B}}^\mF_{\epsilon_1}(\Sigma_1) $.
	\end{itemize}
	If $\Sigma_2\in \mathbf{B}^\mF_{\epsilon_1}(\Sigma_1)$, we just skip to the procedure for $\Sigma_3$.

	By repeating this procedure, we stop after a finite number of times, say $m$ times. 
	Then we get positive numbers $\{\epsilon_q \}_{q=1}^m$, $i_m\in\N$, $\{j_q\}_{q=1}^m\subset\N$, and a sequence $\{\Phi_i^m\}_{i\in\N}$ such that 
	\begin{itemize}
		\item $\Phi_i^m$ is homotopic to $\Phi_i$ in $\Z_n^G(M;\mF;\mZ_2)$ for all $i\in\N$;
		\item $\mathbf{L}(\{\Phi_i^m\}_{i\in\N})\leq L$;
		\item $\mF \big( |\Phi_i^m|(X),  ~ \cup_{q=1}^m \overline{\mathbf{B}}^\mF_{\epsilon_q}(\Sigma_{j_q}) \cup  \overline{\mathbf{B}}_{\epsilon_0}^\mF ( \mathcal{W}(r) ) \big) > 0$ for all $i\geq i_m$; 
		\item $\mathcal{W}_G^{k+1} \setminus \overline{\mathbf{B}}_{\epsilon_0}^\mF ( \mathcal{W}(r) ) = \{\Sigma_1,\Sigma_2,\dots \Sigma_J\} \subset \cup_{q=1}^m \mathbf{B}^\mF_{\epsilon_q}(\Sigma_{j_q})$.
	\end{itemize}
	Define the sequence $\{\Psi_l\}_{l\in\N}$ by $\Psi_l := \Phi_{l}^m $, which clearly satisfies: 
	\begin{itemize}
		\item[(1)] $\Psi_l$ is homotopic to $\Phi_{l}$ in $\Z_n^G(M;\mF;\mZ_2)$ for all $l\in\N$;
		\item[(2)] $\mathbf{L}(\{\Psi_l\}_{l\in\N}) = L = {\bf L}({\bm \Pi})$;
		\item[(3)] $\mathbf{C}(\{\Psi_l\}_{l\in\N}) \cap (\mathcal{W}_G^{k+1}  \cup \mathcal{W}(r)) = \emptyset$.
	\end{itemize}
	Now, we can apply the Equivariant Min-max Theorem \ref{Thm: min-max} to get an integral stationary $G$-varifold $\Sigma_r\in \mathbf{C}(\{\Psi_l\}_{l\in\N})\cap \big(\mathcal{W}_G \setminus (\mathcal{W}_G^{k+1}  \cup \mathcal{W}(r)) \big)$ as required. 
	
	Moreover, by Compactness Theorem \ref{Thm: compactness theorem} and Proposition \ref{Prop: index estimate}, there is a sequence $r_i\to 0$ so that $\Sigma_{r_i}$ converges in the varifold sense to an integral stationary $G$-varifold $\Sigma\in \mathbf{C}(\{\Phi_i\}_{i\in\N})$, which is a min-max $(c,G)$-hypersurface with ${\rm Index}_G(\spt(\|\Sigma \|))\leq k$. 
	Indeed, since $\mathcal{W}_G$ is finite, $\Sigma_{r_i} = \Sigma$ for $i$ large enough.

	{\bf Case 2: $g_{_M}$ is not $G$-bumpy.} 
	By Theorem \ref{Thm: bumpy metric}, there exists a sequence of $G$-bumpy metrics $\{g_{_M}^j\}_{j\in \N}$ tending to $g_{_M}$ in the smooth topology. 
	As we proved in {\bf Case 1}, for each $j\in\N$, we have a min-max sequence $\{\Psi_l^j\}_{l\in\N}$ of ${\bf \Pi}$ with respect to the metric $g_{_M}^j$, and an integral $g_{_M}^j$-stationary $G$-varifold $\Sigma_j\in \mathbf{C}(\{\Psi_l\}_{l\in\N})$ such that 
	\begin{itemize}
		\item $\Sigma_j$ is a min-max $(c,G)$-hypersurface corresponding to ${\bf \Pi}$ under the metric $g_{_M}^j$,
		\item $||\Sigma_j||(M)= L_j$, ${\rm Index}_G(\spt(\|\Sigma_j\|))\leq k$,
	\end{itemize}
	where $L_j$ is the width of ${\bf \Pi}$ with respect to $g_{_M}^j$. 
	It is clear that $\lim_{j\to\infty}L_j = \mathbf{L}({\bm \Pi}) $. 
	Therefore, by Compactness Theorem \ref{Thm: compactness theorem} and Remark \ref{Rem: various metrics}, \ref{Rem: index estimate}, there is a (not relabeled) subsequence $j\to \infty$ so that $\Sigma_j$ converges to an integral stationary $G$-varifold $\Sigma$, which is a min-max $(c,G)$-hypersurface under the metric $g_{_M}$ satisfying (i)-(iii). 
\end{proof}

By Proposition \ref{Prop: isomorphism}, we have $H^1(\Z_n^G(M; \F; \mZ_2); \mZ_2 ) \cong \mZ_2$ with a generator $\bar{\lambda}$. 
For any $p\in\mZ^+$, denote by $\mathcal{P}^G_p$ the set of all continuous maps $\Phi: X^k\to \Z_n^G(M; \mF ; \mZ_2)$ such that $0\neq \Phi^*(\bar{\lambda}^p) \in H^p(X; \mZ_2)$, where $X^k$ is any cubical complex of dimension $k \geq p$ and $\bar{\lambda}^p$ is the cup product of $\bar{\lambda}$ with itself $p$ times. 
Then the {\em $(G,p)$-width} of $M$ is defined by 
\begin{equation}\label{Eq: Gp-width}
	\omega_p^G(M) := \inf_{\Phi\in \mathcal{P}_p^G} \sup_{x\in{\rm dmn}(\Phi) } \M(\Phi(x)). 
\end{equation}

\begin{proof}[Proof of Corollary \ref{Cor: achieve width of M}]
	For any $\Phi \in \mathcal{P}^G_p$ with $X = {\rm dmn}(\Phi)$, let $X^{(p)}$ be the $p$-skeleton of $X$, $i:X^{(p)}\to X$ be the natural inclusion map. 
	Then we have $H^p(X, X^{(p)};\mZ_2) = 0$. 
	Thus the exact cohomology sequence
	$ H^{p}\left(X, X^{(p)} ; \mathbb{Z}_{2}\right) \stackrel{j^{*}}{\rightarrow} H^{p}\left(X ; \mathbb{Z}_{2}\right) \stackrel{i^{*}}{\rightarrow} H^{p}\left(X^{(p)} ; \mathbb{Z}_{2}\right) $ implies 
	that $\Phi\circ i : X^{(p)} \to \Z_n^G(M; \mF ; \mZ_2) $ is also an element in $\mathcal{P}_p^G$. 

	Let $\Phi_i: X_i \to \Z_n^G(M; \mF; \mZ_2)$ be a sequence in $\mathcal{P}_p^G$ so that $ \sup_{x\in X_i } \M(\Phi_i(x)) \to \omega_p^G(M)$. 
	Then, $\Phi_i\llcorner X_i^{(p)} \in \mathcal{P}_p^G$ and $\sup_{x\in X_i^{(p)} } \M(\Phi_i(x)) \to \omega_p^G(M) $. 
	Denote by ${\bm \Pi}_i\in [X_i^{(p)} , \Z_n^G(M; \mF; \mZ_2)]$ the $G$-homotopy class of $\Phi_i\llcorner X_i^{(p)}$. 
	Then ${\bf L}({\bm \Pi}_i) \to \omega^G_p(M)$. 
	Finally, Corollary \ref{Cor: achieve width of M} follows directly from Theorem \ref{Thm: main theorem}, \ref{Thm: compactness theorem}, and Proposition \ref{Prop: index estimate}. 
\end{proof}

\appendix
	\addcontentsline{toc}{section}{Appendices}
	\renewcommand{\thesection}{\Alph{section}}

\section{Local $G$-invariant minimal foliations}\label{Sec: foliation}

In this appendix, we construct local $G$-invariant minimal foliations around an orbit $G\cdot p$ in a minimal $G$-hypersurface $\Sigma$. 
This is an equivariant generalization of the proposition in \cite[Appendix]{white1987curvature}. 
We start with some notations and preparatory work. 

Let $\Sigma$ be a closed smooth embedded $G$-invariant minimal hypersurface with a $G$-invariant unit normal $\nu$. 
For any $G\cdot p\subset \Sigma$, define 
\[\B_{r_0}^\Sigma(G\cdot p) := \{q\in\Sigma: \dist_\Sigma(q,G\cdot p)< r_0 \}\]
to be a geodesic $r_0$-tube around $G\cdot p$ in $\Sigma$. Then for $r_0>0$ small enough, 
\[ E: \B_{r_0}^\Sigma(G\cdot p)\times [-r_0, r_0] \to M, \quad E(q,t):= \exp_q(t\cdot \nu(q)), \]
is an equivariant diffeomorphism from $\B_{r_0}^\Sigma(G\cdot p)\times [-r_0, r_0]$ to a neighborhood of $G\cdot p $ in $M$: 
\[\C_{r_0}(G\cdot p) :=E(\B_{r_0}^\Sigma(G\cdot p)\times [-r_0, r_0]).\]
Next, let $\mS_{p,r_0}^\Sigma := \{ q\in \B_{r_0}^\Sigma(G\cdot p): \dist_\Sigma(q,  p)=\dist_\Sigma(q,G\cdot p) \}$ be a slice of $G\cdot p$ at $p$ in $\Sigma$. 
Then we denote 
\[ \mS_{p,r_0} := E(\mS_{p,r_0}^\Sigma\times [-r_0, r_0]) \]
to be a slice of $G\cdot p$ at $p$ in $\C_{r_0}(G\cdot p)$. 
By \cite[Section 2.1.3 ($\Sigma_5$)]{berndt2016submanifolds}, $\C_{r_0}(G\cdot p)$ is a fiber bundle over $G\cdot p$ with fiber $\mS_{p,r_0}$ at $p$. 
Let 
\[ P : \C_{r_0}(G\cdot p) \to G\cdot p, \quad P(q)=p,~\forall q\in \mS_{p,r_0}, \]
be the bundle projection, which is clearly a submersion. 

Next, we are going to construct a $G$-invariant elliptic integrand $\varphi$ on $\mS_{p,r_0}$ so that for any $G$-invariant hypersurface $\Gamma$ in $\C_{r_0}(G\cdot p)$, $\mH^n(\Gamma) = \mH^{\dim(G\cdot p)}(G\cdot p)\cdot {\underline{\underline \varphi}}(\Gamma\cap \mS_{p,r_0} )$. 
Once we find such $\varphi$, then the $G$-invariant minimal foliation on $\C_{r}(G\cdot p)$ is equivalent to the $G_p$-invariant $\varphi$-stationary foliation on $\mS_{p,r_0}\cong \mathbb{B}_{r_0}^{n-\dim(G\cdot p)}(0)\times [r_0,r_0]$. 
To construct such an elliptic integrand, we need the following lemma.

\begin{lemma}\label{Lem: distribution on slice}
	There exists a smooth distribution $V$ on $\C_{r_0}(G\cdot p) $ with rank $\dim(G\cdot p)$ so that 
	\begin{itemize}
		\item[(i)] $V$ is $G$-invariant, i.e. $dg(V(q)) = V(g\cdot q)$;
		\item[(ii)] $T_qG\cdot q = T_qG_p\cdot q \oplus V(q)$ for all $q\in \mS_{p,r_0}$.
	\end{itemize}
\end{lemma}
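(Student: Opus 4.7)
The plan is to construct $V$ algebraically from the Lie algebra data. Since $G$ is compact, I will fix a bi-invariant inner product on $\mathfrak{g}=\mathrm{Lie}(G)$ and take the orthogonal complement $\mathfrak{m}$ of $\mathfrak{g}_p:=\mathrm{Lie}(G_p)$ inside $\mathfrak{g}$. Bi-invariance makes $\mathfrak{m}$ an $\mathrm{Ad}(G_p)$-invariant complement with $\dim\mathfrak{m} = \dim G-\dim G_p = \dim(G\cdot p)$. For each $\xi\in\mathfrak{g}$, let $X_\xi$ denote the Killing field on $M$ defined by $X_\xi(q)=\frac{d}{dt}\big|_{t=0}\exp(t\xi)\cdot q$; these fields depend smoothly on both $\xi$ and $q$.

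The first step is to define $V$ on the slice by
\[
V(q) := \{X_\xi(q): \xi\in\mathfrak{m}\},\qquad q\in\mS_{p,r_0}.
\]
The slice theorem (after possibly shrinking $r_0$) gives $G_q\subset G_p$ for all $q\in\mS_{p,r_0}$, hence $\mathfrak{g}_q\cap\mathfrak{m}\subset\mathfrak{g}_p\cap\mathfrak{m}=0$. Since the kernel of $\xi\mapsto X_\xi(q)$ equals $\mathfrak{g}_q$, this makes $\mathfrak{m}\to V(q)$ injective and $\dim V(q)=\dim\mathfrak{m}=\dim(G\cdot p)$. Using $T_qG\cdot q=\{X_\xi(q):\xi\in\mathfrak{g}\}$ and $T_qG_p\cdot q=\{X_\xi(q):\xi\in\mathfrak{g}_p\}$, the splitting $\mathfrak{g}=\mathfrak{g}_p\oplus\mathfrak{m}$ transfers directly to the decomposition $T_qG\cdot q = T_qG_p\cdot q\oplus V(q)$ required by (ii).

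Next I will verify $G_p$-equivariance on the slice so that the extension by the $G$-action is consistent. For $g\in G_p$ with $g\cdot q\in\mS_{p,r_0}$, the standard identity $dg(X_\xi(q))=X_{\mathrm{Ad}(g)\xi}(g\cdot q)$ together with $\mathrm{Ad}(G_p)\mathfrak{m}\subset\mathfrak{m}$ yields $dg(V(q))=V(g\cdot q)$. In particular $V(q)$ is $G_q$-invariant, so setting
\[
V(g\cdot q) := dg\bigl(V(q)\bigr),\qquad g\in G,\ q\in\mS_{p,r_0},
\]
produces a well-defined $G$-invariant distribution on the full tube $\C_{r_0}(G\cdot p)=G\cdot\mS_{p,r_0}$, with (i) built into the construction.

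Finally, smoothness will follow from the smooth dependence of $X_\xi(q)$ on the base point: the bundle map $\mathfrak{m}\times\C_{r_0}(G\cdot p)\to TM$, $(\xi,q)\mapsto X_\xi(q)$, is smooth, and its image at every $q$ in the tube has rank $\dim(G\cdot p)$. I expect the main (mild) obstacle to be this constancy-of-rank claim throughout the entire tube; however, for $q=g\cdot q_0$ with $q_0\in\mS_{p,r_0}$ and $G_q=gG_{q_0}g^{-1}$, the property $\mathfrak{g}_{q_0}\cap\mathfrak{m}=0$ carries over via $\mathrm{Ad}(g)$, so after shrinking $r_0$ the rank remains equal to $\dim\mathfrak{m}$ on all of $\C_{r_0}(G\cdot p)$, completing the construction.
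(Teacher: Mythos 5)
Your construction is the same as the paper's: the paper defines $V_e = (T_eG_p)^\perp$ with respect to a bi-invariant metric on $G$ and sets $V(q)=(d\rho_q)_e(V_e)$, which is exactly your $V(q)=\{X_\xi(q):\xi\in\mathfrak{m}\}$ after identifying $T_eG$ with $\mathfrak g$, and it verifies $G$-invariance via the isometry $c(g)=g(\cdot)g^{-1}$, which is your $\mathrm{Ad}(G_p)$-invariance of $\mathfrak m$. Your write-up is correct; the only stylistic difference is that you define $V$ on the slice and extend by equivariance, whereas the paper defines it pointwise using the stabilizer of the slice center, but these yield the same distribution.
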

\begin{proof}
	Since $G$ is a compact Lie group, we can take a bi-invariant metric $g_{_G}$ on $G$ (\cite[Corollary 1.4]{milnor1976curvatures}). 
	Then for any $g\in G$, the conjugate map 
	\[c(g) :G\to G,\quad c(g)(h) := g\cdot h\cdot g^{-1},\]
	is an isometry on $(G,g_{_G})$. 
	Let $\rho: G\times M\to M$, $\rho(g,q):= g\cdot q$, be the smooth $G$-action map and $\rho_q = \rho(\cdot, q): G\to M $ for any $q\in M$. 
	Then we have 
	\[ (d\rho_q)_e = (d\rho)_{(e,q)}(\cdot, 0): T_eG\to T_qM, \quad {\rm and}\quad \ker((d\rho_q)_e) = T_eG_q, \]
	where $e\in G$ is the identity element. 
	
	For any $q\in \C_{r_0}(G\cdot p)$, assume without loss of generality that $q\in \mS_{p,r_0}$. 
	Therefore, $G_q\subset G_p$ and $\ker((d\rho_q)_e)\subset \ker((d\rho_p)_e) $. 
	Let $V_e := (T_eG_p)^\perp = (\ker((d\rho_p)_e))^\perp$ be the orthogonal complement of $T_eG_p$ in $T_eG$ with respect to the metric $g_{_G}$. 
	Define then 
	\[ V(q):= (d\rho_q)_e(V_e) = (d\rho)_{(e,q)}(V_e,0) ,\]
	which is a smooth distribution of rank $\dim(G\cdot p)=\dim(V_e)$ since $(d\rho_q)_e \llcorner V_e$ is injective. 
	
	For any $g\in G$, since $G_{g\cdot p} = g G_p g^{-1} = c(g)(G_p)$ and $c(g)$ is an isometry, we see
	\[ (dc(g))_e\left((T_eG_p)^\perp\right) = \big((dc(g))_e(T_eG_p)\big)^\perp = (T_eG_{g\cdot p})^\perp.\]
	Additionally, if $v=\frac{d}{dt}\big|_{t=0} g(t) \in (T_eG_p)^\perp$, 
	then 
	\begin{eqnarray*}
		(dg\circ d\rho_q)_e(v) &=& \frac{d}{dt}\Big|_{t=0} g\cdot g(t)\cdot q = \frac{d}{dt}\Big|_{t=0} g\cdot g(t)\cdot g^{-1} \cdot gq
		\\ &=& \frac{d}{dt}\Big|_{t=0} c(g)( g(t))\cdot gq = (d\rho_{gq})_e\circ (dc(g))_e(v). 
	\end{eqnarray*}
	Thus, $ dg(V(q)) = (dg\circ d\rho_q)_e((T_eG_p)^\perp) =(d\rho_{gq})_e\circ (dc(g))_e((T_eG_p)^\perp) = (d\rho_{gq})_e((T_eG_{g\cdot p})^\perp) = V(gq) $, and $V$ is $G$-invariant. 
	
	Since $T_qG_p\cdot q = (d\rho_q)_e(T_eG_p) $ and $T_qG\cdot q = (d\rho_q)_e(T_eG)$, we have $T_qG_p \cdot q + V(q) = T_qG\cdot q$. 
	Since $\ker((d\rho_q)_e)\subset T_eG_p$, we see $T_qG_p \cdot q \cap V(q) = \{0\}$, which implies (ii). 
\end{proof}

Let $V$ be given by Lemma \ref{Lem: distribution on slice}. 
Then for any $q\in \mS_{p,r_0}$ and $0\neq v\in T_q\mS_{p,r_0}$, define 
\begin{equation}\label{Eq: integrand on slice}
	\varphi: T\mS_{p,r_0}\setminus \mS_{p,r_0}\times\{0\} \to \mathbb{R},\quad \varphi(q,v) :=  \left\{\begin{array}{ll}{|v| ,} & {\dim(G\cdot p)=0} \\ {|v| / J_P^{v_\perp,*}(q), } & {\dim(G\cdot p)>0} \end{array}\right.
\end{equation}
where $v_\perp := V(q) + \{u\in T_q\mS_{p,r_0}: u\perp v\}$ is an $n$-plane in $T_qM$, $P : \C_{r_0}(G\cdot p) \to G\cdot p$ is the bundle projection, and $J_P^{v_\perp,*}(q)$ is the Jacobian of the submersion $P$ on $v_\perp$ at $q$. 
We can also define $\varphi$ on any other slice $\mS_{g\cdot p,r_0}$ in the same way. 

Note $G\cdot q$ intersects transversally with the slice $\mS_{p,r_0}$ for all $q\in \mS_{p,r_0}$. 
Thus, Lemma \ref{Lem: distribution on slice}(ii) suggests that $(dP)_q\llcorner V(q)$ has full rank, so $J_P^{v_\perp,*}(q)>0$ and $\varphi$ is well defined when $\dim(G\cdot p)>0$. 
Moreover, since $V(p)=T_pG\cdot p$ and $(dP)_p\llcorner T_pG\cdot p = id$, we have 
\begin{equation}\label{Eq: integrand is 1 at center}
	J_P^{v_\perp,*}(p) = 1\quad {\rm and} \quad \varphi(p,v)=  |v|, ~\forall 0\neq v\in T_pS_{p,r_0}. 
\end{equation}
Because $G$ acts by isometries and $V$ is $G$-invariant (Lemma \ref{Lem: distribution on slice}(i)), one verifies that 
\begin{equation}\label{Eq: integrand is G invariant}
	(dg(v))_\perp  = dg(v_\perp)\quad{\rm and}\quad  \varphi(g\cdot q, dg(v)) = \varphi(q,v)
\end{equation}
for all $g\in G$ by the chain rule. 

\begin{lemma}\label{Lem: integrand on slice}
	For $r_0>0$ small enough, $\varphi$ is a smooth $G_p$-invariant elliptic integrand on $\mS_{p,r_0}$ so that 
	\[\frac{\mH^n(\Gamma)}{\mH^{\dim(G\cdot p)}(G\cdot p)} =  {\underline{\underline \varphi}} (\Gamma\cap \mS_{p,r_0}):=\int_{\Gamma\cap \mS_{p,r_0}}\varphi(q, v(q))d\mH^{n-\dim(G\cdot p)}(q),\]
	for all $G$-invariant hypersurface $\Gamma$ in $\C_{r_0}(G\cdot p)$, where $v(q) $ is a unit normal of $\Gamma\cap \mS_{p,r_0}$ at $q$. 
	In particular, $\Gamma$ is a minimal $G$-hypersurface in $\C_{r_0}(G\cdot p)$ if and only if $\Gamma=G\cdot \Gamma_p$ and $\Gamma_p$ is a $G_p$-hypersurface in $\mS_{p,r_0}$ with $G_p$-stationarity for $\varphi$-integrand. 
\end{lemma}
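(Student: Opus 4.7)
The plan is to verify the four properties of $\varphi$---smoothness, $G_p$-invariance, ellipticity, and the area identity---separately, and then to deduce the minimality characterization from the area identity via the correspondence between $G$-equivariant vector fields on $\C_{r_0}(G\cdot p)$ and $G_p$-equivariant vector fields on $\mS_{p,r_0}$.

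Smoothness of $\varphi$ away from the zero section follows from smoothness of the distribution $V$ (Lemma \ref{Lem: distribution on slice}) and of the submersion $P$, together with $J_P^{v_\perp,*}(p)=1$ by (\ref{Eq: integrand is 1 at center}); shrinking $r_0$ keeps $J_P^{v_\perp,*}$ uniformly bounded away from zero on unit tangent vectors. The $G_p$-invariance is immediate from (\ref{Eq: integrand is G invariant}), using that $g\in G_p$ preserves $\mS_{p,r_0}$ and $P$ is $G$-equivariant. For ellipticity, (\ref{Eq: integrand is 1 at center}) identifies $\varphi(p,\cdot)$ with the Euclidean norm on $T_p\mS_{p,r_0}$, which is strictly elliptic; since ellipticity is an open condition in the $C^2$-topology and $\varphi$ approaches $|\cdot|$ smoothly as $r_0\to 0$, a further shrinking of $r_0$ renders $\varphi$ elliptic throughout $\mS_{p,r_0}$.

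The heart of the argument is the area identity, which I would prove via the coarea formula applied to $P|_\Gamma\colon\Gamma\to G\cdot p$. Write $\Gamma=G\cdot\Gamma_p$ with $\Gamma_p:=\Gamma\cap\mS_{p,r_0}$; this decomposition is valid because $\mS_{p,r_0}$ is a $G_p$-slice for the $G$-action on $\C_{r_0}(G\cdot p)$. The key geometric claim is that $T_q\Gamma=v_\perp$ for every $q\in\Gamma_p$. Indeed, by $G$-invariance of $\Gamma$, $T_q\Gamma\supset T_qG\cdot q=V(q)\oplus T_qG_p\cdot q$ by Lemma \ref{Lem: distribution on slice}(ii), while $T_q\Gamma\supset T_q\Gamma_p\supset T_qG_p\cdot q$ using $G_p$-invariance of $\Gamma_p$; a dimension count using $V(q)\cap T_q\mS_{p,r_0}=\{0\}$ (by transversality at $p$ and continuity) then forces $T_q\Gamma=V(q)+T_q\Gamma_p=v_\perp$. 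Consequently the coarea Jacobian of $P|_\Gamma$ at $q\in\Gamma_p$ equals $J_P^{v_\perp,*}(q)$, and the coarea formula with weight $1/J_{P|_\Gamma}$, combined with $G$-equivariance (so that the fiber integrals over $\Gamma\cap\mS_{hp,r_0}=h\cdot\Gamma_p$ are all equal to $\int_{\Gamma_p}\varphi(q,v(q))\,d\mH^{n-\dim G\cdot p}$), yields the claimed identity.

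For the minimality characterization, $G$-invariant smooth variations of $\Gamma$ in $\C_{r_0}(G\cdot p)$ split into tangential-to-orbit pieces (which act as infinitesimal $G$-elements and hence preserve $\Gamma$ as a set, contributing nothing to the first variation) and slice-tangent pieces, which correspond bijectively to $G_p$-invariant variations of $\Gamma_p$ in $\mS_{p,r_0}$ by restriction and $G$-equivariant extension. Differentiating the area identity at $t=0$ along such a variation then gives the equivalence of $G$-stationarity of $\Gamma$ with $G_p$-stationarity of $\Gamma_p$ for $\underline{\underline{\varphi}}$. The main obstacle I foresee is the identification $T_q\Gamma=v_\perp$: at non-principal $q$ close to $p$, the orbit $G\cdot q$ may have dimension strictly exceeding $\dim G\cdot p$, so $T_qG\cdot q\cap T_q\mS_{p,r_0}$ is nontrivial, and careful bookkeeping of $G_p$-invariance of $\Gamma_p$ together with the decomposition $V(q)\oplus T_q\mS_{p,r_0}=T_qM$ (valid for small $r_0$) is needed to push the dimension count through uniformly.
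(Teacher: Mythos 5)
Your proposal is correct and follows essentially the same route as the paper: establish $T_q\Gamma = v(q)_\perp$ via Lemma \ref{Lem: distribution on slice}(ii) together with the transversality $V(q)\cap T_q\mS_{p,r_0}=\{0\}$, apply the coarea formula for $P|_\Gamma$ with weight $1/J_{P|_\Gamma}$, and read off the first-variation correspondence (via Lemma \ref{Lem: G-stationary}) to get the minimality equivalence. The ``obstacle'' you flag at the end is in fact already handled by your own dimension count: the decomposition $T_qG\cdot q=T_qG_p\cdot q\oplus V(q)$ absorbs any excess in $\dim(G\cdot q)$ into $T_qG_p\cdot q\subset T_q\mS_{p,r_0}$, so the relevant transversality is that of $V(q)$, not of the whole orbit tangent, against the slice.
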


\begin{proof}
	The lemma is trivially true if $\dim(G\cdot p)=0$, so we assume $\dim(G\cdot p)>0$. 
	It is clear that $\varphi$ is smooth and $G_p$-invariant on $\mS_{p,r_0}$. 
	By the smoothness and (\ref{Eq: integrand is 1 at center}), we see $\varphi$ is elliptic when $r_0>0$ is small enough. 
	Suppose $\Gamma$ is a $G$-invariant hypersurface in $\C_{r_0}(G\cdot p)$. 
	Then by Lemma \ref{Lem: distribution on slice}(ii) and the transversality between orbits and slices, we see $\Gamma_p := \Gamma\cap \mS_{p,r_0} = \Gamma\cap P^{-1}(p)$ is a $G_p$-invariant hypersurface in $ \mS_{p,r_0} $ and $T_q\Gamma=v(q)_\perp$, where $v(q) $ is a unit normal of $\Gamma_p\subset \mS_{p, r_0}$ at $q$. 
	Since $G$ acts by isometries, it is easy to verify that ${\underline{\underline \varphi}} (\Gamma_p) = {\underline{\underline \varphi}} (\Gamma_{g\cdot p})$ for all $g\in G$, and thus
	\begin{eqnarray*}
		\mH^n(\Gamma) & = & \int_{G\cdot p}\int_{\Gamma\cap P^{-1}(p_0)} 
		\frac{1}{J_P^{T_q\Gamma, *}(q)}     ~d\mH^{n-\dim(G\cdot p)}(q) ~d\mH^{\dim(G\cdot p)}(p_0)
		\\
		&=& \mH^{\dim(G\cdot p)}(G\cdot p)\cdot \int_{\Gamma_p}\varphi(q, v(q))d\mH^{n-\dim(G\cdot p)}(q)
	\end{eqnarray*}
	by the co-area formula. 
	In particular, a $G$-equivariant variation for the area functional in $\C_{r_0}(G\cdot p)$ is equivalent to a $G_p$-equivariant variation for the ${\underline{\underline \varphi}}$-functional in $\mS_{p,r_0}$. 
	Hence, the last statement follows from Lemma \ref{Lem: G-stationary}. 
\end{proof}

Now, we can use the arguments in \cite[Appendix]{white1987curvature} to construct $G_p$-invariant $\varphi$-stationary foliations on $\mS_{p,r}$ for $r>0$ small enough, which gives $G$-invariant minimal foliations on $\C_{r}(G\cdot p)$ by Lemma \ref{Lem: integrand on slice}. 
For the sake of completeness, we state the main result of this appendix and include the details, where we add a subscript $G, G_p$ to $C^{2,\alpha} , C^{2,\alpha}_0$ indicating that the functions are $G$ or $G_p$ invariant. 

\begin{proposition}\label{Prop: foliation}
	Let $(M^{n+1},g_{_M})$ be a closed Riemannian manifold and $G$ be a compact Lie group acting by isometries on $M$ such that $3\leq {\rm codim}(G\cdot p) \leq 7$, $\forall p\in M$. 
	Suppose $\Sigma\subset M$ is a closed smooth embedded minimal hypersurface with a $G$-invariant unit normal $\nu$ on $\C_{r_0}(G\cdot p)\cap \Sigma$, where $G\cdot p\subset \Sigma$ and $\C_{r_0}(G\cdot p)$ is defined as above. 
	Then there exists $\epsilon \in (0,r_0)$ so that if $0<r<\epsilon$ and 
	\[ w\in C^{2,\alpha}_G(\overline{\B_{r}^\Sigma(G\cdot p)} ), \quad \|w\|_{2,\alpha} < \epsilon, \]
	then for each $t\in [-1,1]$, there is a $C^{2,\alpha}_G$ function $v_t:\overline{\B_{r}^\Sigma(G\cdot p)}\to \mathbb{R}$ satisfying
	\begin{itemize}
		\item[(i)] ${\rm Graph}(v_t) := \exp_{\Sigma}^\perp(v_t\nu)$ is a $G$-invariant $C^{2,\alpha}$ minimal hypersurface;
		\item[(ii)] $v_t \llcorner \partial \B_{r}^\Sigma(G\cdot p) =  w + t$;
		\item[(iii)] $v_t$ depends on $t$ in a $C^1$ way so that $\{{\rm Graph}(v_t)\}_{t\in [-1,1]}$ foliates $\C_{r}(G\cdot p)$; 
		\item[(iv)] if $v^1_0, v^2_0$ correspond to $w_1,w_2$ respectively, then \[\sup_{q\in \overline{\B_{r}^\Sigma(G\cdot p)}}|v^1_0 - v^2_0| ~\leq~ 2 \cdot \sup_{q\in \partial \B_{r}^\Sigma(G\cdot p)} |w_1 - w_2|. \]
	\end{itemize}
\end{proposition}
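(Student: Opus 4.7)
The plan is to reduce the $G$-equivariant Dirichlet problem on the tube $\C_r(G\cdot p)$ to a $G_p$-equivariant Dirichlet problem on the slice via Lemma \ref{Lem: integrand on slice}, and then solve it by the Schauder implicit function theorem, transplanting the construction of \cite[Appendix]{white1987curvature} to the equivariant setting. Since $\nu$ is a $G$-invariant unit normal, a $G$-invariant $C^{2,\alpha}$ hypersurface $C^0$-close to $\Sigma\cap\C_{r_0}(G\cdot p)$ is precisely the normal graph $\exp_\Sigma^\perp(v\nu)$ of some $v\in C^{2,\alpha}_G(\overline{\B_r^\Sigma(G\cdot p)})$, and minimality becomes a quasilinear elliptic equation $N(v)=0$ whose linearization at $v\equiv 0$ is the $G$-equivariant Jacobi operator $L_\Sigma$.

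First I would apply the Schauder implicit function theorem to the map
\[
F:C^{2,\alpha}_G(\overline{\B_r^\Sigma(G\cdot p)})\to C^{0,\alpha}_G(\overline{\B_r^\Sigma(G\cdot p)})\times C^{2,\alpha}_G(\partial\B_r^\Sigma(G\cdot p)),\quad F(v)=(N(v),\,v|_\partial).
\]
Since $\Sigma$ is $G$-invariant and $G$ acts by isometries, $L_\Sigma$ preserves the $G$-invariant H\"older spaces. Reducing to the slice via Lemma \ref{Lem: integrand on slice}, the relevant problem acts on $G_p$-invariant functions on $\mS_{p,r}^\Sigma$, which is a nearly Euclidean ball of dimension $n-\dim(G\cdot p)\geq 2$; its first Dirichlet eigenvalue blows up like $r^{-2}$, while the zeroth-order coefficients of $L_\Sigma$ stay bounded. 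Hence $(L_\Sigma,\mathrm{trace})$ is an isomorphism for $r$ small, and the IFT produces a unique $C^1$ family $v_t\in C^{2,\alpha}_G$ solving $N(v_t)=0$ with $v_t|_\partial=w+t$, for $\|w\|_{2,\alpha}+|t|$ small; a rescaling of $t$ gives the full range $t\in[-1,1]$.

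For (iii), differentiating $N(v_t)=0$ in $t$ yields $L_t(\partial_t v_t)=0$ with $\partial_t v_t|_\partial\equiv 1$, where $L_t$ is a $C^{0,\alpha}$-small perturbation of $L_\Sigma$. Since the first Dirichlet eigenvalue of $L_t$ remains positive for small $r$, the maximum principle forces $\partial_t v_t>0$, yielding the foliation. For (iv), subtracting the two nonlinear equations for $v^1_0$ and $v^2_0$ and interpolating the coefficients gives a homogeneous linear equation $L(v^1_0-v^2_0)=0$ with $L$ close to $L_\Sigma$ on a small domain. The maximum principle then gives $\sup|v^1_0-v^2_0|\leq C(r)\sup_{\partial}|w_1-w_2|$, and $C(r)\to 1$ as $r\to 0$ (the Green's function of a small perturbation of the Laplacian on a small domain approaches the pure harmonic one), so shrinking $\epsilon$ makes $C(r)\leq 2$.

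The main technical obstacle is verifying the isomorphism step for the linearization on the $G$-invariant H\"older spaces, and uniformly controlling the Green's function / Dirichlet eigenvalues as $r\to 0$. This becomes transparent only after the slice reduction of Lemma \ref{Lem: integrand on slice}, which turns the problem into a standard elliptic problem on a shrinking ball of dimension at least two; from there, everything follows by classical Schauder theory and the maximum principle, so the foliation construction of White adapts directly to the $G$-equivariant setting.
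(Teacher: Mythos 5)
Your overall strategy matches the paper's: reduce the $G$-equivariant graphical minimal surface equation on $\C_r(G\cdot p)$ to a $G_p$-equivariant $\varphi$-stationary equation on the slice $\mS_{p,r}$ via Lemma~\ref{Lem: integrand on slice}, then apply the Schauder implicit function theorem and the maximum principle. Two concrete differences and one concern are worth flagging.

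First, the paper does not argue directly on the shrinking domain $\mS_{p,r}$ with eigenvalue blow-up; instead it pulls everything back to a \emph{fixed} domain $\C_1=\mathbb{B}_1\times[-1,1]$ by the $r$-dilation $\mu_r$, defining rescaled metrics $g^r=\frac{1}{r^2}\mu_r^*\tilde g$ and integrands $\Phi_r$. The key consequence is that as $r\to 0$, $g^r\to g^0$ (Euclidean) and $\Phi_r\to |\cdot|_{g^0}$, so the linearization $D_4\Psi(0,t,0,0)$ is \emph{exactly} the flat Laplacian $\triangle_{g^0}$ on the unit ball, a known isomorphism on $C^{2,\alpha}_{G_p,0}\to C^{0,\alpha}_{G_p}$. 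The IFT is applied at the boundary point $r=0$ of the parameter interval, with $t$ ranging over $\R$ and $\epsilon'(t_0)$ taken uniformly over $t_0\in[-1,1]$. Your alternative — first Dirichlet eigenvalue $\sim r^{-2}$ dominating the bounded zeroth-order term of the Jacobi operator — is correct and proves invertibility, but it is a pointwise-in-$r$ statement; you still need a uniform modulus of continuity for the IFT and a clean identification of the $r\to 0$ linearization, which the rescaling provides for free. Similarly, for (iv) the paper gets the constant $2$ by observing that at $r=0$ the operator is exactly harmonic (giving constant $1$) and then shrinking $\epsilon'$ by continuity; your Green's-function heuristic is the same idea but phrased in a way that would require estimates you haven't provided.

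Second, and more substantively, the phrase ``a rescaling of $t$ gives the full range $t\in[-1,1]$'' glosses over a real point. The IFT at $t=0$ in your setup only produces solutions with $|t|$ small (since the boundary datum $w+t$ must be small for the linearization argument), and reparametrizing $t\mapsto\lambda t$ changes the boundary value, so you would not literally recover $v_t\llcorner\partial\B_r^\Sigma(G\cdot p)=w+t$ for all $t\in[-1,1]$. The paper's dilation $\mu_r$ makes $t\in[-1,1]$ natural: in the rescaled coordinates of $\C_1$ the boundary datum $t+w'$ with $t\in[-1,1]$ is order one and the graphs foliate $\C_{1/2}$; pushing forward by $E\circ F\circ\mu_r$ gives the claimed foliation of $\C_r(G\cdot p)$. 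If you want to avoid the rescaling, you would need to run the IFT at each $t_0\in[-1,1]$ (not just $t_0=0$) with uniform constants, which is essentially what the paper does after passing to $\C_1$.

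Finally, a small but necessary observation you state only implicitly: to restrict the Fredholm/isomorphism statement to the $G_p$-invariant subspaces, one needs the uniqueness of the Dirichlet solution, since uniqueness plus the equivariance of the equation implies the unique solution with $G_p$-invariant data is itself $G_p$-invariant (this is how the paper verifies $D_4\Psi(0,t,0,0)$ is an isomorphism between $C^{2,\alpha}_{G_p,0}(\overline{\mathbb{B}_1})$ and $C^{0,\alpha}_{G_p}(\overline{\mathbb{B}_1})$). Your eigenvalue argument gives this uniqueness, so this is a presentation gap rather than a mathematical one.
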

\begin{proof}
	Let $\mathbb{B}_1:= \mathbb{B}^{n-\dim(G\cdot p)}_1(0)$, $\C_1 := \mathbb{B}_1\times [-1,1]$, where $\mathbb{B}^{n-\dim(G\cdot p)}_1(0)\subset {\bf N}_p^\Sigma G\cdot p$ is an Euclidean unit ball with $G_p$ acts on it orthogonally. 
	$F: \C_1\to \mS_{p,r_0}^\Sigma\times [-r_0, r_0]$ is a $G_p$-equivariant diffeomorphism given by the compound of $\exp_{G\cdot p}^{\Sigma,\perp}\times id$ and the $r_0$-dilation. 
	Now we use $F$ and $E$ to pull back the computations from $\mS_{p,r_0}$ to $\C_1$:
	\[ \tilde{g}:= (E\circ F)^* g_{_M}, \quad \tilde{\varphi}(x,v):=\varphi \big(E\circ F(x), d(E\circ F)(v) \big), \]
	where $x=(y,t)\in \C_1, 0\neq v\in T_x\C_1$. 
	
	For any $0<r<r_0$, define $\mu_r: \C_1\to \C_r$, $\mu_r(y,t)=(ry,rt)$ as the $r$-dilation and
	\[  g^r:=\frac{1}{r^2}\cdot (\mu_r)^* \tilde{g}, \quad \Phi_r(x,v):=\tilde{\varphi}(\mu_r(x), v),~\forall x\in \C_1, 0\neq v\in \mathbb{R}^{n+1-\dim(G\cdot p)}\cong T_x\C_1.\]
	Let $\nabla^r$ be the connection corresponding to $g^r$. 
	Then, as $r\to 0$, we see $g^r$ tends to the Euclidean metric $g^0:=\tilde{g}(0)$, and $\Phi_r$ tends to the area integrand $\Phi_0 :=\tilde{\varphi}(0,\cdot )=\varphi (p, d(E\circ F)(\cdot)) =|\cdot |_{g^0}$. 
	We also mention that $g^r$ and $\Phi_r$ are $G_p$-invariant. 
	Moreover, by the area formula, if $\Gamma\subset \C_1$ is a hypersurface, then 
	\begin{equation}\label{Eq: integrad scaling}
		{\underline{\underline {\Phi_r}}}(\Gamma ) = \int_{\Gamma} \Phi_r\big(x, \tilde{\nu}(x) \big)d\mH^n_{g^r}(x) = \frac{1}{r^n}
		\int_{\mu_r(\Gamma)} \tilde{\varphi} \big(x, \frac{1}{r}d\mu_r(\tilde{\nu} ) \big)d\mH^n_{\tilde{g}}(x) = \frac{1}{r^n} {\underline{\underline {\tilde \varphi}}}(\mu_r(\Gamma)),
	\end{equation}
	where $\tilde{\nu}$ and $\frac{1}{r}d\mu_r(\tilde{\nu} )$ are the unit normal of $(\Gamma, g^r)$ and $(\mu_r(\Gamma), \tilde{g})$ respectively, and we also used $\tilde{\nu}=\frac{1}{r}d\mu_r(\tilde{\nu} )$ by identifying tangent spaces with $\mathbb{R}^{n+1-\dim(G\cdot p)}$. 
	Therefore, $\Gamma\subset \C_1$ is a $\Phi_r$-stationary $G_p$-hypersurface under $g^r$ if and only if $\mu_r(\Gamma)\subset \C_r$ is a $\tilde{\varphi}$-stationary $G_p$-hypersurface under $\tilde{g}$, if and only if $E\circ F\circ \mu_r(\Gamma)$ is a $\varphi$-stationary $G_p$-hypersurface under $g_{_M}$. 
	
	For any $r\in [0, r_0)$, $x\in \mathbb{B}_1$, $y\in [-1,1]$, $z\in \R^{n-\dim(G\cdot p)}={\rm span} \big(\{ \frac{\partial}{\partial x_i} \}_{i=1}^{n-\dim(G\cdot p)} \big)$, let 
	\begin{itemize}
		\item $T_r(x,y,z):={\rm span}\big( \{ \frac{\partial}{\partial x_i} + \langle z, \frac{\partial}{\partial x_i}\rangle_{g^r(x,y)} \cdot \frac{\partial}{\partial t}  \}_{i=1}^{n-\dim(G\cdot p)} \big)$ be a hyperplane in $T_{(x,y)}\C_1$;
		\item $\nu_r(x,y,z)$ be the upper unit normal of $T(x,y,z)$ in $T_{(x,y)}\C_1$ under $g^r$;
		\item $A_r(x,y,z):= \Phi_r((x,y), \nu_r(x,y,z))$. 
	\end{itemize}
	Since $g^r$ and $\Phi_r$ are $G_p$-invariant, we have $ A_r(g\cdot x, y, dg(z)) = A_r(x,y,z)$ for all $g\in G_p$. 
	Then for any $f\in C^{2,\alpha}(\overline{\mathbb{B}_1})$, ${\rm Graph}(f) = \{(x,f(x)):x\in\overline{\mathbb{B}_1} \}$ satisfies $T_{(x,f(x))}{\rm Graph}(f) = T_r(x,f(x),\nabla^r f(x) )$ and 
	\[ {\underline{\underline {\Phi_r}}} ({\rm Graph}(f)) = \int_{\overline{\mathbb{B}_1}} A_r(x,f(x), \nabla^r f(x) ) d\mH^{n-\dim(G\cdot p)}(x).  \]
	If $f_s=f+s\eta \in C^{2,\alpha}(\overline{\mathbb{B}_1})$ is a variation of $f=f_0$ with $\eta\llcorner\partial \mathbb{B}_1 = 0 $, then 
	\[ \frac{d}{ds}\Big|_{s=0} {\underline{\underline {\Phi_r}}} ({\rm Graph}(f_s)) = \int_{\overline{\mathbb{B}_1}} H_r(f)(x)\cdot \eta (x) ~d\mH^{n-\dim(G\cdot p)}(x),\]
	where $H_r(f) = -\Div D_3A_r(x, f(x), \nabla^r f(x) ) + D_2 A_r(x, f(x), \nabla^rf(x) )\in C^{0,\alpha}(\overline{\mathbb{B}_1})$. 
	By the $G_p$-invariance, we have $A_r(x, f\circ g(x), \nabla^r f\circ g(x) ) = A_r(\cdot, f(\cdot), \nabla^rf(\cdot) )\circ g(x) $ and 
	\[H_r(f\circ g)(x) = H_r(f)(g\cdot x),\]
	for all $g\in G_p$. 
	In particular, $H_r(f)\in C^{0,\alpha}_{G_p}(\overline{\mathbb{B}_1})$ provided $f\in C^{2,\alpha}_{G_p}(\overline{\mathbb{B}_1})$.

	Define \[ \Psi : [0, r_0)\times \R \times C^{2,\alpha}_{G_p}(\overline{\mathbb{B}_1}) \times C^{2,\alpha}_{G_p,0}(\overline{\mathbb{B}_1}) \to C^{0,\alpha}_{G_p}(\overline{\mathbb{B}_1}) \]
	by $\Psi(r,t,w,u) = H_r(t+w+u)$. 
	Then $\Psi$ is $C^1$ (c.f. \cite[Appendix]{white1987space}) and 
	\[D_4\Psi(0, t, 0, 0)[v] = \frac{d}{ds}\Big|_{s=0} \Psi(0, t, 0, sv) = \frac{d}{ds}\Big|_{s=0} \Div_{g^0}\left(\frac{s\nabla^0 v }{\sqrt{1+s^2|\nabla^0  v|}} \right) = \triangle_{g^0} v .\]
	By standard Schauder estimates, the Laplace operator $\triangle_{g^0}$ in the Euclidean space is a Banach isomorphism between $C^{2,\alpha}_{0}(\overline{\mathbb{B}_1}) $ and $C^{0,\alpha}(\overline{\mathbb{B}_1})$. 
	Note, if $f\in C^{0,\alpha}(\overline{\mathbb{B}_1})$ is $G_p$-invariant and $\triangle_{g^0} v=f$, $v\llcorner\partial \mathbb{B}_1 = 0 $, then $\triangle_{g^0} (v\circ g)=f\circ g =f$, $v\circ g \llcorner\partial \mathbb{B}_1 = 0 $ for all $g\in G_p$. 
	Since $\triangle_{g^0}$ is an isomorphism (or by the maximum principle), we have $ v=v\circ g$, so $v$ is also $G_p$-invariant. 
	Therefore, 
	\[D_4\Psi(0, t, 0, 0): C^{2,\alpha}_{G_p,0}(\overline{\mathbb{B}_1}) \to C^{0,\alpha}_{G_p}(\overline{\mathbb{B}_1}) \]
	is an isomorphism. 
	By the implicit function theorem, for any $t_0\in\R$, there exists $\epsilon' = \epsilon'(t_0) \in (0, r_0)$ so that if $t\in (t_0-\epsilon', t_0+\epsilon' )$, $r\in [0, \epsilon')$ and $w\in C^{2,\alpha}_{G_p}(\overline{\mathbb{B}_1})$ with $\|w\|_{2,\alpha}<\epsilon'$ then we can find $u=u(r,t,w) \in C^{2,\alpha}_{G_p,0}(\overline{\mathbb{B}_1}) $ satisfies
	\begin{itemize}
		\item[(1)] $\Psi(r,t,w,u)=0$, i.e. ${\rm Graph}(t+w+u)$ is a $G_p$-invariant $\Phi_r$-stationary hypersurface in $(\C_1, g^r)$ with boundary value $t+w$;
		\item[(2)] $u$ depends in a $C^1$ way on $t$. 
	\end{itemize}
	Since $\Psi$ 	depends in a $C^1$ way on $t_0$, the constant $\epsilon'>0$ can be taken uniformly for $t_0\in [-1,1]$. 
	Then the $C^1$-dependence of $u$ on $t$ implies $\{{\rm Graph}(t+w+u)\}_{t\in [-1,1]}$ foliates $\C_\frac{1}{2}$. 
	Additionally, by taking $\epsilon'>0$ even smaller, we have 
	\begin{equation}\label{Eq: solution max-principle}
		\sup_{x\in \overline{\mathbb{B}_1}}|(t+w_1+u_1) - (t+w_2+u_2)| ~\leq~ 2 \cdot \sup_{x\in \partial \mathbb{B}_1}|w_1 - w_2|, 
	\end{equation}
	where $u_1=u(r,t,w_1)$, $u_2=u(r,t,w_2)$, $t\in [-1,1]$. 
	Indeed, if $r=0$, this inequality is valid with $1$ in place of $2$ on the right-hand side by the maximum principle. 
	Thus, the above inequality follows from the continuity.  
	
	For such $\epsilon'>0$, set $\epsilon =\frac{\epsilon'}{2}$.
	Then for any $t\in [-1,1]$, $0<r<\epsilon$, $ w\in C^{2,\alpha}_G(\overline{\B_{r}^\Sigma(G\cdot p)} )$ with $\|w\|_{2,\alpha} < \epsilon $, we can define $w' (x)= w(E\circ F \circ \mu_r(x))$ for any $x\in \overline{\mathbb{B}_1}$ so that $w'\in C^{2,\alpha}_{G_p}(\overline{\mathbb{B}_1})$ and $\|w'\|_{2,\alpha}<\epsilon$. 
	Then the above constructions give $u'= u'(r,t,w')\in C^{2,\alpha}_{G_p,0}(\overline{\mathbb{B}_1})$ satisfying (1)(2)(\ref{Eq: solution max-principle}) and $\C_{\frac{1}{2}}\subset \cup_{t\in [-1,1]} {\rm Graph}(t+w'+u') $. 
	Now we define $u''=u''(r,t,w')\in \C^{2,\alpha}_{G_p}(\mS_{p,r}^\Sigma)$ by $u''(q)=u'\left( \left(E\circ F\circ \mu_r \right)^{-1}(q) \right)$ for all $q\in \mS_{p,r}^\Sigma$. 
	After setting $u(g\cdot q) = u''(q)$ for all $g\in G$ and $q\in \mS_{p,r}^\Sigma$, we obtain a well-defined $G$-invariant $C^{2,\alpha}$ function $u=u(r,t,w)$ on $\B_{r}^\Sigma(G\cdot p)$. 
	Finally, take $v_t := t+w+u(r,t,w)$, and the proof is finished by the properties of $u'$, (\ref{Eq: integrad scaling}), and Lemma \ref{Lem: integrand on slice}. 
\end{proof}

\noindent
{\bf Declarations:}
\\
\noindent
{\bf Conflict of interest.} The author declares that there are no conflicts of interest. 
\\
\noindent
{\bf Availability of data.} Not applicable (no data are attached to this paper).


\bibliographystyle{abbrv}

\providecommand{\bysame}{\leavevmode\hbox to3em{\hrulefill}\thinspace}
\providecommand{\MR}{\relax\ifhmode\unskip\space\fi MR }
\providecommand{\MRhref}[2]{%
  \href{http://www.ams.org/mathscinet-getitem?mr=#1}{#2}}
\providecommand{\href}[2]{#2}

\bibliography{reference}   

\end{document}